


\documentclass[12pt,reqno]{amsart} 



\addtolength{\textwidth}{3pc}
\addtolength{\textheight}{5pc}
\addtolength{\hoffset}{-1.5pc} 
\addtolength{\voffset}{-2.5pc} 




\usepackage{mathdots}
\usepackage{amsmath, amssymb} 
\usepackage{amscd}            
\usepackage{amsxtra}          
\usepackage{upref}            


\usepackage[mathscr]{eucal}





\theoremstyle{plain}

\newtheorem{theorem}{Theorem}
\newtheorem{proposition}[theorem]{Proposition}

\newtheorem{corollary}[theorem]{Corollary}
\newtheorem{lemma}[theorem]{Lemma}


\newtheorem{definition}[theorem]{Definition}


\theoremstyle{remark}
\newtheorem{remark}[theorem]{Remark}


\numberwithin{equation}{section}
\numberwithin{theorem}{section}
\newcommand{\be}%
  {\protect\setcounter{equation}{\value{subsubsection}}}  
\newcommand{\ee}%
  {\protect\setcounter{subsubsection}{\value{equation}}}



\newcommand{\Z}{\mathbb{Z}}
\newcommand{\Q}{\mathbb{Q}}

\newcommand{\C}{\mathbb{C}} 
\newcommand{\I}{\mathcal{I}}

\newcommand{\aq}{\mathbb{A}_{\Q}}
\newcommand{\af}{\mathbb{A}_{F}}

\newcommand{\gltwo}{{\rm GL}_2}
\newcommand{\glr}{{\rm GL}_r}

\newcommand{\gln}{{\rm GL}_n}

\newcommand{\glnfv}{\gln (F_v)}
\newcommand{\glrfv}{\glr (F_v)}
\newcommand{\gltwonfv}{{\rm GL}_{2n}(F_v)}
\newcommand{\gltwonaf}{{\rm GL}_{2n}(\af)}

\newcommand{\gltwoaq}{\gltwo(\aq)}

\newcommand{\cs}{\mathcal S}

\newcommand{\wpiv}{{\mathcal W}(\pi_v,\psi_v)}

\newcommand{\res}{\text{Re}(s)}

\newcommand{\ljsepiv}{L_{JS}(s,\pi_v,\wedge^2)}
\newcommand{\lgepiv}{L(s,\pi_v,\wedge^2)}

\newcommand{\jswvphiv}{J(s,W_v,\phi_v)}
\newcommand{\jswv}{J(s,W_v)}
\newcommand{\lshepiv}{L_{Sh}(s,\pi_v,\wedge^2)}
\newcommand{\ljsepi}{L_{JS}(s,\pi,\wedge^2)}
\newcommand{\lshepi}{L_{Sh}(s,\pi,\wedge^2)}
\newcommand{\lgepi}{L(s,\pi,\wedge^2)}

\DeclareMathOperator{\tr}{Tr}



\begin{document}


\title {On the local and global exterior square $L$-functions of $\text{GL}_n$}
\author{Pramod~Kumar~Kewat} 
\address{The Institute of Mathematical Sciences\\
         Chennai 600113\\ India}
\email{pramodkk@imsc.res.in}
\subjclass{11} 
\author{Ravi~Raghunathan} 
\address{Department of Mathematics \\ 
         Indian Institute of Technology Bombay\\
         Mumbai 400076\\ India}
\email{ravir@math.iitb.ac.in}
\subjclass[2000]{11F70} 

\keywords{exterior square $L$-functions}
\begin{abstract} 
We show that the local exterior square $L$-functions of $\rm{GL}_n$ constructed via the 
theory of integral representations by Jacquet and Shalika coincide with 
those constructed by the Langlands-Shahidi method for square integrable
representations (and for all irreducible representations when $n$ is even).
We also deduce several local and global consequences.
\end{abstract}

\maketitle


\markboth{P.K. Kewat and R. Raghunathan}{On the exterior square $L$-function}

\section{Introduction}
Let $F$ be a number field, $v$ a place of $F$ and $F_v$ its completion. To 
any irreducible admissible representation $\pi_v$ of ${\rm GL}_{n}(F_v)$ 
the local Langlands correspondence attaches an $n$-dimensional representation 
$\rho_{F_v}(\pi_v)$ of the Weil-Deligne group, when $F_v$ is a $p$-adic field, and 
of the Weil group, when $F_v$ is archimedean. The exterior square 
$L$-function of $\pi_v$ is defined via this correspondence as a Galois 
$L$-function --
\[
L(s,\pi_v,\wedge^2):=L(s,\wedge^2(\rho_{F_v}(\pi_v)),
 \]
and there have been different approaches to establishing its analytic properties.
In \cite{JaSh90a}, Jacquet and Shalika suggested that 
$\ljsepiv$, defined as the ``greatest common divisor" of certain local 
integrals denoted $\jswvphiv$, when $n$ is even, and $\jswv$, when $n$ is odd, should
yield the local exterior square $L$-function, when $v$ is $p$-adic.
At the archimedean places, $\ljsepiv$ is defined via the local Langlands correspondence,
as before.

On the other hand, the Langlands-Shahidi method also provides a potential
construction for this $L$-function which we denote by $\lshepiv$, for any place
$v$. The corresponding global $L$-functions are defined by
\[
\ljsepi=\prod_v\ljsepiv\quad\text{and}\quad\lshepi=\prod_v\lshepiv.
\]
The main result of this paper is the following theorem.
\begin{theorem}\label{sqinteqthm} If $\pi_v$ is an irreducible, smooth, square integrable 
representation of
$\glnfv$, we have
\begin{equation}\label{sqinteq}
\ljsepiv=\lshepiv.
\end{equation}
\end{theorem}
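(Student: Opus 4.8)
The plan is to argue globally. Both the Jacquet--Shalika integrals and the Langlands--Shahidi construction attach to a cuspidal automorphic representation $\Pi=\bigotimes'_w\Pi_w$ of $\glnaf$ a complete exterior square $L$-function, namely $\prod_w L_{JS}(s,\Pi_w,\wedge^2)$ (resp.\ $\prod_w L_{Sh}(s,\Pi_w,\wedge^2)$), satisfying a functional equation that relates its value at $s$ to the value of the $L$-function of $\conpi$ at $1-s$, the discrepancy being a product over all places of local $\varepsilon$-factors attached to a fixed nontrivial additive character $\psi$ of $F\backslash\af$. I would realize the given $\pi_v$ as a local component of such a $\Pi$ all of whose remaining local factors are already known to agree in the two theories, and then extract the identity at $v$ from the two functional equations. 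Since at an archimedean place both local $L$- and $\varepsilon$-factors are, by definition on the Jacquet--Shalika side and by Shahidi's identification on the Langlands--Shahidi side, the Artin factors attached through the local Langlands correspondence, we may assume $v$ is $p$-adic.

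First I would globalize. By a globalization theorem there is a cuspidal automorphic representation $\Pi$ of $\glnaf$ with $\Pi_v\cong\pi_v$ that is unramified at every finite place outside $v$, except possibly at one auxiliary place $v_0$, where one can arrange $\Pi_{v_0}$ to be an unramified twist of the Steinberg representation; the archimedean components are left essentially unconstrained and are dealt with through the local Langlands correspondence. (Cuspidality is automatic, since a representation with a square integrable component at $v$ can be neither residual nor properly induced.) If such a place $v_0$ occurs, the case of an unramified twist of the Steinberg representation has to be treated separately, by a direct computation of its two exterior square factors.

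Next comes the comparison away from $v$. At every finite $w\neq v$ the component $\Pi_w$ is unramified, so $L_{JS}(s,\Pi_w,\wedge^2)=L_{Sh}(s,\Pi_w,\wedge^2)$ is the exterior square Satake factor, and the two local $\varepsilon$-factors coincide, both being the Artin $\varepsilon$-factor of the unramified representation $\wedge^2\rho_{F_w}(\Pi_w)$ relative to $\psi_w$; at the archimedean places they coincide by the remark above, and at $v_0$, if present, by the direct computation. Dividing the Jacquet--Shalika global functional equation by the Langlands--Shahidi one, and doing the same for $\conpi$ at $1-s$, every local contribution at a place other than $v$ cancels, leaving
\[
r(s)=u(s)\,r^{\vee}(1-s),
\]
where $r(s)=\ljsepiv/\lshepiv$, $r^{\vee}(s)=L_{JS}(s,\tilde{\pi}_v,\wedge^2)/L_{Sh}(s,\tilde{\pi}_v,\wedge^2)$, and $u(s)$ is the ratio of the two local exterior square $\varepsilon$-factors at $v$, hence a unit (a monomial in $q_v^{-s}$ times a nonzero constant).

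Finally I would conclude from the location of the poles. Each of $\ljsepiv$ and $\lshepiv$ is the inverse of a polynomial in $q_v^{-s}$ with constant term $1$; since $\pi_v$ and $\tilde{\pi}_v$ are square integrable, each is holomorphic for $\re{s}>0$ --- for the Langlands--Shahidi factor by Shahidi's results on the holomorphy of tempered $L$-functions, and for the Jacquet--Shalika factor by the absolute convergence of the local integrals $\jswvphiv$ (resp.\ $\jswv$) for a tempered representation in that half-plane. Hence the zeros of $r$, and those of $r^{\vee}$, lie in $\re{s}\le 0$; the displayed relation then forces the zeros of $r$ to lie in $\re{s}\ge 1$ as well, so $r$ is zero-free, which means that the denominator polynomial of $\lshepiv$ divides that of $\ljsepiv$. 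The holomorphy of $\ljsepiv$ in $\re{s}>0$ yields the reverse divisibility in the same way, and since both denominator polynomials have constant term $1$ they are equal, establishing \eqref{sqinteq}. I expect the principal difficulty to be the globalization step with ramification controlled tightly enough that the auxiliary place is absent or already settled, together with supplying, on the Jacquet--Shalika side, the precise global functional equation (with the Schwartz datum present for $n$ even and absent for $n$ odd) and the holomorphy of the local integrals for tempered representations, in forms parallel to the established Langlands--Shahidi facts; the unramified and archimedean comparisons, by contrast, are routine.
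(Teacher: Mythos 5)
Your broad strategy --- globalize the square integrable $\pi_v$, cancel the other places from the two global functional equations, and force the local ratio to be trivial by a pole/zero analysis --- is indeed the paper's strategy. But two of the inputs you take as given are precisely what is missing in the Jacquet--Shalika theory and what the paper must work around; without them the cancellation you perform is not available.

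First, there is no a priori Jacquet--Shalika functional equation that factorizes over places with local $\varepsilon$-factors. The only functional equation available (Theorem \ref{global_funteq}) is the statement that the \emph{global integral} satisfies $I(s,\varphi,\Phi)=I(1-s,\rho(w_{n,n})\tilde{\varphi},\hat{\Phi})$; this relates products of \emph{local integrals}, not products of local $L$-functions and $\varepsilon$-factors. To pass from the integral $J(s,W_v,\phi_v)$ to $\ljsepiv$ one must sum finitely many choices of data, and the dual sum $\sum_i J(1-s,\rho(w_{n,n})\tilde W_{i,v},\hat\phi_{i,v})$ is only some Laurent polynomial multiple $M_1(q_v^{-(1-s)})$ of $L_{JS}(1-s,\tilde\pi_v,\wedge^2)$. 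The paper proves Lemma \ref{monemon} --- that $M_1$ and $M_2$ are monomials --- by comparing the functional equations for $\pi$ and for $\tilde\pi$ and using the normalization $\hat{\hat{\phi}}(x)=\phi(-x)$; only then does one get a unit on the right-hand side. Your proposal presupposes exactly this, so the crucial step has been assumed rather than proved (indeed, the local functional equation with $\epsilon_{JS}$ is a \emph{consequence} of the main theorem, established afterwards in Section \ref{locfneqn}).

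Second, the archimedean local factors do not simply ``cancel.'' While $\ljsepiv$ at $v\mid\infty$ is by definition the Artin factor, it is an open problem whether the archimedean integrals $\jswvphiv$ realize this $L$-function up to a unit (see the remark following the definition of $\ljsepiv$ in Section \ref{irm}). So the product over $S_\infty$ that enters the global functional equation of the integral cannot be replaced by $\prod_{v\mid\infty}\lgepiv$. The paper instead keeps the archimedean integrals explicitly (the factor $R_1(s,\Pi)$), uses Proposition \ref{arch-fp} (finite pole sets in vertical strips), Proposition \ref{arch-Sh} (poles of archimedean $L_{Sh}$ lie on horizontal lines), Belt's non-vanishing Theorem \ref{nv-lf}, and the incommensurability of $\log q_v$ for different residue characteristics to rule out that the lattice of zeros (or poles) $s_0+2\pi ik/\log q_{v_0}$ can be absorbed elsewhere. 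This lattice argument is the technical heart of the proof and is what your proposal omits by assuming more structure than exists.

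Finally, the paper's globalization is the weaker Arthur--Clozel version (only $\Pi_{v_0}\simeq\pi_{v_0}$ is prescribed; other finite places may be ramified, hence the factors $Q_1,Q_2$). The tighter globalization you envisage (unramified away from $v$ plus one Steinberg auxiliary place) would indeed shortcut the argument, but is not invoked here, and would additionally require a direct computation of $\ljsepiv$ for an unramified twist of Steinberg that the paper does not carry out. Your concluding divisibility argument is fine once one knows the ratio satisfies a functional equation with a monomial error term; the gap is in getting to that point.
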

When $n$ is even, it is possible to express the exterior square $L$-function of an arbitrary 
irreducible generic representation in terms of  $L$-functions of the inducing quasi-square 
integrable data (see \cite{CoPS942}). This allows us to prove the following theorem.
\begin{theorem}\label{loceqthm} If $\pi_v$ is an irreducible generic representation of 
$\gltwonfv$, we have
\begin{equation}\label{loceq}
\ljsepiv=\lshepiv.
\end{equation}
\end{theorem}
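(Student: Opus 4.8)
The plan is to deduce Theorem~\ref{loceqthm} from Theorem~\ref{sqinteqthm} by exploiting the compatibility of both constructions with parabolic induction, so that the exterior square $L$-function of $\pi_v$ is expressed through exterior square and Rankin--Selberg $L$-functions of quasi-square integrable data. At an archimedean place there is nothing to prove: $\ljsepiv$ is defined through the local Langlands correspondence, and $\lshepiv$ coincides with the associated Artin $L$-function by Shahidi's archimedean results, so both sides of \eqref{loceq} agree by definition. Hence I may assume that $v$ is $p$-adic.

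Since $\pi_v$ is irreducible and generic, the classification of irreducible generic representations of $\gltwonfv$ realizes it as a full induced representation
\[
\pi_v\;\cong\;\mathrm{Ind}_{P(F_v)}^{\gltwonfv}\bigl(\Delta_1\otimes\cdots\otimes\Delta_t\bigr),
\]
where $P$ is a standard parabolic with Levi quotient $\mathrm{GL}_{n_1}\times\cdots\times\mathrm{GL}_{n_t}$, $n_1+\cdots+n_t=2n$, and each $\Delta_i$ is an irreducible quasi-square integrable representation of $\mathrm{GL}_{n_i}(F_v)$; under the local Langlands correspondence $\rho_{F_v}(\pi_v)=\bigoplus_i\rho_{F_v}(\Delta_i)$. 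From the decomposition
\[
\wedge^2\Bigl(\bigoplus_i\rho_i\Bigr)\;=\;\bigoplus_i\wedge^2\rho_i\;\oplus\;\bigoplus_{i<j}\rho_i\otimes\rho_j
\]
the Galois-theoretic exterior square $L$-function factors accordingly, and so do both sides of \eqref{loceq}. On the Langlands--Shahidi side this is Shahidi's multiplicativity of the $L$-functions (equivalently, of the $\gamma$-factors), which gives
\[
\lshepiv\;=\;\prod_{i=1}^{t}L_{Sh}(s,\Delta_i,\wedge^2)\;\prod_{1\le i<j\le t}L_{Sh}(s,\Delta_i\times\Delta_j);
\]
on the Jacquet--Shalika side the corresponding factorization is supplied by \cite{CoPS942} (applicable here because the ambient rank $2n$ is even), which gives
\[
\ljsepiv\;=\;\prod_{i=1}^{t}L_{JS}(s,\Delta_i,\wedge^2)\;\prod_{1\le i<j\le t}L_{JS}(s,\Delta_i\times\Delta_j),
\]
the factors $L_{JS}(s,\Delta_i\times\Delta_j)$ being the Rankin--Selberg $L$-functions of Jacquet--Piatetski-Shapiro--Shalika.

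It remains to identify the two sets of factors. For the cross terms one uses the known equality of the Rankin--Selberg $L$-functions of Jacquet--Piatetski-Shapiro--Shalika and of Shahidi, so that $L_{JS}(s,\Delta_i\times\Delta_j)=L_{Sh}(s,\Delta_i\times\Delta_j)$ for all $i<j$. For the diagonal terms one writes $\Delta_i=\delta_i\otimes|\det|^{u_i}$ with $\delta_i$ genuinely square integrable; since both $L_{JS}(s,\,\cdot\,,\wedge^2)$ and $L_{Sh}(s,\,\cdot\,,\wedge^2)$ transform in the same way under such a twist (replacing $s$ by $s+2u_i$), the identity $L_{JS}(s,\Delta_i,\wedge^2)=L_{Sh}(s,\Delta_i,\wedge^2)$ follows from Theorem~\ref{sqinteqthm} applied to $\delta_i$; here $\delta_i$ may have odd rank $n_i$, but Theorem~\ref{sqinteqthm} is stated for square integrable representations of $\mathrm{GL}_n$ for every $n$. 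Multiplying the matched factors yields \eqref{loceq}. The essential point to secure is the Jacquet--Shalika factorization, namely that the greatest common divisor of the exterior square integrals attached to the sections of $\mathrm{Ind}(\Delta_1\otimes\cdots\otimes\Delta_t)$ is exactly the displayed product of lower-rank exterior square and Rankin--Selberg $L$-functions; this multiplicativity of the local integrals is what is imported from \cite{CoPS942}, and once it and Theorem~\ref{sqinteqthm} are granted the remaining steps are routine, the only delicate points being the odd-rank blocks and the reduction from essentially square integrable to genuinely square integrable data.
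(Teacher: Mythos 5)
Your proof is correct and follows essentially the same route as the paper's: realize $\pi_v$ as parabolically induced from quasi-square integrable data, invoke the Cogdell--Piatetski-Shapiro factorization of $\ljsepiv$ into lower-rank exterior square and Rankin--Selberg $L$-functions, reduce the diagonal terms to the square integrable case by an unramified twist, and apply Theorem~\ref{sqinteqthm}. The only cosmetic difference is that you factor the Langlands--Shahidi side via Shahidi's multiplicativity and match factors directly against the Jacquet--Shalika side, whereas the paper instead passes through the Galois $L$-function using Henniart's identification $\lshepiv = \lgepiv$ and performs the $\wedge^2$-decomposition Galois-theoretically; both routes rest on the same ingredients and are equivalent.
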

As an immediate corollary, we obtain the following global result.
\begin{theorem}\label{globeqthm}  Let $\pi=\otimes'_v\pi_v$ be a cuspidal 
automorphic representation of $\gltwonaf$, then
\begin{equation}\label{globeq}
\ljsepi=\lshepi.
\end{equation}
\end{theorem}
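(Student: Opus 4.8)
The plan is to obtain Theorem \ref{globeqthm} as a formal consequence of the local identity of Theorem \ref{loceqthm}, the only additional ingredient being the genericity of the local components of a cuspidal automorphic representation.

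First I would recall that every cuspidal automorphic representation $\pi=\otimes'_v\pi_v$ of $\gltwonaf$ is globally generic: a cusp form in the space of $\pi$ admits a Fourier expansion along the standard maximal unipotent subgroup, and the associated Whittaker functional on $\pi$ does not vanish identically (this is the classical argument of Piatetski--Shapiro and Shalika for $\gln$). By the uniqueness of Whittaker models this global functional factors as a restricted tensor product of local Whittaker functionals, so its nonvanishing forces each local component $\pi_v$ to be an irreducible generic representation of $\gltwonfv$. Since the rank $2n$ is even, Theorem \ref{loceqthm} now applies to $\pi_v$ at every place $v$ --- archimedean, ramified finite, or unramified finite --- and yields $\ljsepiv=\lshepiv$ for all $v$.

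Next, using the very definitions
\[
\ljsepi=\prod_v\ljsepiv\qquad\text{and}\qquad\lshepi=\prod_v\lshepiv,
\]
the equality of every local Euler factor gives $\ljsepi=\lshepi$ at once: the two global $L$-functions agree factor by factor, hence they coincide in the region $\res\gg 0$ where both products are absolutely convergent, and therefore as meromorphic functions of $s$ by the analytic continuations furnished by the Jacquet--Shalika and Langlands--Shahidi theories.

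I do not expect any substantive obstacle once Theorem \ref{loceqthm} is in hand; the one step that merits emphasis is the appeal to genericity, which is exactly what permits the use of the local comparison at \emph{every} place rather than only at the unramified ones. I would also remark that nothing beyond global genericity is used, so the same argument applies to any globally generic automorphic representation of $\gltwonaf$, and that the statement may equally be read at the level of the completed $L$-functions together with their functional equations.
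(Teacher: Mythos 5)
Your overall strategy — reduce the global identity to a place‑by‑place identity and then multiply — is exactly what the paper does, and your observation that every local component of a cuspidal automorphic representation of $\gltwonaf$ is generic is the right reason Theorem~\ref{loceqthm} is available at each finite place. But there is a gap at the archimedean places. Theorem~\ref{loceqthm}, as proved in the paper, is a $p$-adic statement: its proof invokes the Bernstein--Zelevinsky classification of generic representations of $p$-adic $\gln$ as parabolically induced from quasi-square-integrable data and the Cogdell--Piatetski-Shapiro factorization~\eqref{gentoqsqJS}, neither of which is applied in the archimedean setting in this paper. You cannot therefore cite Theorem~\ref{loceqthm} ``at every place --- archimedean, ramified finite, or unramified finite'' as you do.

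What closes the archimedean case is something more elementary, and different in nature from the $p$-adic case. The paper \emph{defines} $\ljsepiv$ at archimedean $v$ via the local Langlands correspondence, so $\ljsepiv = \lgepiv$ there by definition (not as a theorem about integrals), and the identity $\lshepiv = \lgepiv$ at real and complex places is Shahidi's result \cite{Shahidi85}, which is bundled into Theorem~\ref{henniart}. So at archimedean $v$ the equality $\ljsepiv = \lshepiv$ follows from the definitional convention plus \cite{Shahidi85}, with genericity playing no role at all. Your write-up should split the places into finite and infinite and invoke Theorem~\ref{loceqthm} only for the former and the archimedean identification only for the latter; once that is done the Euler-product argument you give is correct and coincides with the paper's.
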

The analogous global theorem in the odd case is less satisfactory, but probably suffices for
many applications.

The results of Henniart in \cite{Henniart10} (see Theorem \ref{henniart} in Section 
\ref{lsmglobal}) show that the equality of the local and global
Langlands-Shahidi $L$-functions with the corresponding local and global Galois $L$-functions. 
This enables us to deduce the following corollary.
\begin{corollary}\label{jsgaleq}
Under the hypotheses of Theorems \ref{sqinteqthm}, \ref{loceqthm} and \ref{globeqthm}, we have
\[
\ljsepiv=\lgepiv\quad\text{and}\quad \ljsepi=\lgepi.
\]
\end{corollary}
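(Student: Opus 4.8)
The plan is simply to concatenate the two families of equalities now at our disposal. On the one hand, Theorem \ref{sqinteqthm} (for square integrable $\pi_v$), Theorem \ref{loceqthm} (for arbitrary irreducible generic $\pi_v$ when the rank is even), and Theorem \ref{globeqthm} (for cuspidal $\pi$ on $\gltwonaf$) identify the Jacquet--Shalika exterior square $L$-functions with the Langlands--Shahidi ones, both locally and globally: $\ljsepiv=\lshepiv$ and $\ljsepi=\lshepi$. On the other hand, Henniart's results, recorded as Theorem \ref{henniart} in Section \ref{lsmglobal}, identify the Langlands--Shahidi exterior square $L$-function with the Galois $L$-function $L(s,\wedge^2(\rho_{F_v}(\pi_v)))$ attached via the local Langlands correspondence, again at each place and for the completed product: $\lshepiv=\lgepiv$ and $\lshepi=\lgepi$. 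Chaining these,
\[
\ljsepiv=\lshepiv=\lgepiv\quad\text{and}\quad\ljsepi=\lshepi=\lgepi,
\]
which is exactly the assertion.

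A couple of routine verifications are worth isolating. At an archimedean place $v$ the left-hand local equality is vacuous, since $\ljsepiv$ is defined there through the local Langlands correspondence, i.e. it \emph{is} $\lgepiv$; so the content at infinite places is carried entirely by the equality $\lshepiv=\lgepiv$ of Theorem \ref{henniart}, and one must ensure the latter is available in the archimedean as well as the $p$-adic case. For the global statement one can either invoke the global forms of Theorems \ref{globeqthm} and \ref{henniart} directly, or deduce it from the local statements together with the facts that $\ljsepi$, $\lshepi$ and $\lgepi$ are each defined as the product over $v$ of the corresponding local factors and that these Euler products converge in a common right half-plane, so that the factor-by-factor agreement forces the agreement of the products.

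The only genuine obstacle here is bookkeeping rather than mathematics: one must check that the hypotheses under which Theorem \ref{henniart} is formulated cover precisely the classes of representations occurring in Theorems \ref{sqinteqthm}--\ref{globeqthm} --- square integrable, generic in even rank, cuspidal in even rank --- and that Henniart's comparison is for the exterior square of the Langlands parameter itself, not for some twist or related functorial lift. Granting that Section \ref{lsmglobal} sets this correspondence up in the stated generality, the corollary follows immediately with no further argument.
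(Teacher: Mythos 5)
Your proof is correct and is exactly the intended argument: the paper itself derives the corollary simply by chaining Theorems \ref{sqinteqthm}--\ref{globeqthm} ($L_{JS}=L_{Sh}$) with Theorem \ref{henniart} ($L_{Sh}=L$, due to Henniart at the finite places and Shahidi at the archimedean ones). Your remarks on the archimedean places (where $\ljsepiv$ is $\lgepiv$ by definition) and on the compatibility of hypotheses are sound but add nothing beyond what the paper already implicitly assumes.
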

Several other pleasant consequences, both local and global, follow from the equalities 
of $L$-functions established above. The analytic properties of $\lshepi$
-- entireness, the functional equation and boundedness in vertical strips -- have been 
established by Shahidi, Kim and Gelbart-Shahidi in a series of papers 
(\cite{Shahidi81,Shahidi90,Kim99,GeSh01}) in many cases. Theorem 
\ref{globeqthm} of this paper,
allows us to show that $\ljsepi$ has the same properties. Our proof of the 
holomorphy results is is different from the one due to Belt (see Theorem 5.2 of \cite{Belt11}),
who excludes the ramified places. The functional equation and boundedness in vertical strips
are new results for $\ljsepi$. On the other hand,
Belt's global theorem for the case when $n$ is even and $\pi$ self dual
allows us to deduce the corresponding analytic behaviour of $\lshepi$.
The analytic properties of the global exterior square $L$-functions are
discussed in Corollaries \ref{analyticjseven}, \ref{analyticjsodd} and 
\ref{analyticlsh} in Section \ref{extensions} of this paper. 

There are also consequences for the local $L$-functions. We are able to 
give a characterisation of self dual square integrable representations 
in terms of the existence of a pole for the local symmetric square $L$-function
in Corollary \ref{symsq}.
A comparison of the two global $L$-functions also enables us to push 
the theory of the local exterior square $L$-functions via integral representations a little further.  
It is now possible to define a local $\epsilon$-factor and establish a local functional 
equation for square integrable representations, in fact, for all generic representations
occurring as a local constituent of a cuspidal automorphic representation in 
the even case (see Theorem \ref{locfnaleqn}).
This is a somewhat indirect method of obtaining the functional equation -- indeed the local 
functional equation has no direct analogue in the Langlands-Shahidi constructions.
We hope to find a proof within the local theory of integral representations in the future
which will work for all irreducible representations.

Theorem \ref{sqinteq} is actually proved by global methods by using techniques
similar to those used (more recently) in \cite{AnRa05} and \cite{Henniart10},
but we need to work somewhat harder since the local theory is not as complete in 
our case. In particular, we note that although the integral representation yields the 
$L$-function at the unramfied places for a suitable choice of test vector, it is not known
whether this choice yields the generator of the relevant fractional ideal (that is, whether it is the 
``greatest common divisor" of the local integrals). Indeed, this last fact only follows 
after we have proved Theorem \ref{loceqthm} in the even case, and remains an problem
in the odd case.

The crucial new inputs are the recent holomorphy and non-vanishing
results of the first author (see Theorem M of \cite{Kewat11}) at the finite places, and 
a non-vanishing result of Dustin Belt (Theorem 2.2 of \cite{Belt11}) at the archimedean places. 
The idea is to embed the square integrable representation as the local component of a 
cuspidal automorphic representation. One then takes the quotient of the global
integral of Jacquet and Shalika by $\lshepi$ to obtain a quotient of finitely many 
local factors. For suitable choices of local data we can 
show that the quotient of the non-archimedean factors must be entire and non-vanishing.
This last argument is dependent on some slightly finer local analysis,
involving the local $\epsilon$-factors, which must be suitably defined in our context.
Once this is done, arguments involving the locations of the possible poles allow us to conclude 
that the relevant quotient is identically $1$, yielding the equality of the two $L$-functions.

The paper is organised as follows. Section \ref{pre} deals with notation and the preliminaries, while
Sections \ref{irm} and \ref{lsm} review what is already known from the theory of integral representations
and the Langlands-Shahidi method respectively. In Section \ref{esqfcr} we prove a lemma about 
certain $\epsilon$-factors (only identified as such later).
The proof of Theorem \ref{sqinteqthm} is treated in Section \ref{proof} and the main extensions and 
corollaries are given in Sections \ref{extensions} and \ref{locfneqn}.

We would like to thank Dustin Belt for very kindly making an early draft of his paper \cite{Belt11} 
available to us, and to Jim Cogdell for bringing Belt's non-vanishing results to our notice.
We would also like to thank U. K. Anandavardhanan for many helpful conversations and several useful
suggestions.

\section{Notation and Preliminaries} \label{pre}
Throughout this paper $F$ will be a number field, $v$ a place of $F$ and 
$F_v$ its completion at the place $v$. Let $\af$ denote
the ring of ad\`eles over $F$. Let $ |x_v|_v $ denote the absolute value of an 
element $x_v$ of $F_v$ and $q_v$ be the cardinality 
of the residue field of $F_v$. If $x$ is in $\af$ and the $x_v$ are its local
components, $|x|$ denotes the product $\prod_v|x_v|_v$
of the local absolute values. We denote by  $\mathcal{S}(F_v^{n})$ (resp. $\mathcal{S}(\af^n)$) 
the space of Schwartz-Bruhat functions on $F_v^{n} $ (resp. $\af^n$). For $\phi_v \in \mathcal{S}(F_v^{n})$ 
(resp. $\phi\in\mathcal{S}(\af^n)$), we denote by $\hat{\phi}_v$ (resp. $\hat{\phi}_v$) the Fourier transform 
of $\phi_v$ (resp. $\phi$).

We let ${\rm G}$ stand for the group $\gln$.
The $F$ points of ${\rm G}$ will be denoted ${\rm G}(F)$, its $F_v$ points by $G_v$ 
and its $\af$ points by $G$. We follow this convention,
whenever convenient, for all the algebraic 
groups defined over $F$ that arise in this paper.
We let ${\rm N}$ be the standard maximal unipotent subgroup, that is, the 
subgroup of ${\rm G}$ consisting 
of upper triangular matrices with $1$ in each diagonal entry, and
we let ${\rm Z}$ be the center of ${\rm G}$.
We will often need to consider the groups $\glr$, when $r=2n$ and $r=2n+1$. 
In this case we denote the standard maximal unipotent subgroup by 
${\rm N}_r$. Let ${\rm M}$ be the space 
of all $n\times n$ matrices and $V$ the subspace of 
all upper triangular $n\times n$ matrices. 
 
Let $ \psi_v$ be a nontrivial additive character of $F_v$. We may view 
$\psi_v$ as a character of $N_v$ by setting
\[
\psi_v(n) = \psi_v\left(\displaystyle{\sum_{i=1}^{n-1}}n_{i,i+1}\right),
\]
for $n\in N_v$. Similarly, a global additive character $\psi$ of $\af$ can be
viewed as a character of $N$.
For a representation $\pi_v$ of $G_v$ on a vector space $U$, let 
$ U_{\psi_v}^{*} $ be the space of all linear forms on $U$ satisfying
\[
\lambda(\pi_v(n)v) = \psi_v(n)\lambda(v),
\]
for all $ v \in U$ and $ n \in N_v$. If $\dim U_{\psi_v}^{*} = 1$, we denote by $\mathcal{W}(\pi_v, \psi_v)$ 
the Whittaker model of $ \pi_v$, which is the space of functions $W_v(g)$ on $G_v$ defined by
\[
W_v(g) = \lambda(\pi_v(g)v), 
\]
where $ v \in U$ and $ \lambda \in U_{\psi}^{*} $.  Note that $G_v$ acts on $ \mathcal{W}(\pi_v, \psi_v)$ 
by right translation and we have $ W_v(ng) = \psi_v(n) W_v(g) $ for  $ n \in N_v$ and  $ g \in G_v. $ 
We say that a representation $ \pi_v$ is generic if it is irreducible and $ \dim U_{\psi_v}^{*} = 1$.

 If $(\pi_v, U)$ is a representation of $G_v$, $(\tilde{\pi}_v, \tilde{U}) $ will denote the contragredient 
 representation of $ (\pi_v, U) $. If $\pi_v$ has a Whittaker model and $W_v \in \mathcal{W}(\pi_v, \psi_v)$, we define the function 
 $\tilde{W}_v$ on $G_v$ by $\tilde{W}_v(g) = W_v(w_{n}~^{\iota}g)$, where $^{\iota}g =~^tg^{-1}$ and
\[w_n = \left( \begin{array}{ccc} &&1 \\ & \iddots & \\ 1&& \end{array} \right).\] The space 
of functions $\mathcal{W}(\tilde{\pi}_v, \bar{\psi}_v) = \{ \tilde{W}\,|\,W \in \mathcal{W}(\pi_v, \psi_v)\}$ 
gives the Whittaker model of $\tilde{\pi}_{v}$.

Let $ \omega_{\pi_v} $ be the central character of $ \pi_v $, if it exists. We say that an irreducible smooth representation $ (\pi_v, U) $ of $G_v$ is square integrable if its central character is unitary and 
\[
\int_{Z_v\backslash G_v} |f(\pi (g)u)|^{2} dg < \infty,
\]
for all $u \in U $ and $ f \in \tilde{U} $. A smooth irreducible representation $\pi$ of $G_v$ is called quasi-square
integrable if it becomes square integrable after twisting by a suitable quasi-character of $G_v$.

\section{The integral representation of Jacquet and Shalika} \label{irm}
We review the theory of the integral representation for the exterior square $L$-function given 
by Jacquet and Shalika in \cite{JaSh90a}, where  several of its important properties were also proved.

\subsection{The Local Theory}\label{irmlocal}
Let $ \pi_v$ be an irreducible generic representation of $\glrfv$. 
In \cite{JaSh90a} Jacquet and Shalika give an integral representation for the exterior square $L$-function 
$L(s, \pi_v,\wedge^{2}), $ using certain families of integrals. If $r$ is even, say $ r = 2n $, we let
\begin{align}\label{J(even)}
\notag J(s, W_v, \phi_v) = 
\int_{N_v \backslash G_v}\int_{V_{v}\backslash M_{v}} & W_v\left(\sigma\left(\begin{array}{cc} I_{n} & X \\ 0 & I_{n} \end{array}\right)
\left(\begin{array}{cc} g & 0 \\ 0 & g \end{array}\right) \right) \hspace{2cm}\\
&\psi_v(-\tr X)dX \phi_v(e_{n}g) |{\det g}|_v^{s} dg,
\end{align}
for each $W_v\in\wpiv$ and $\phi_v$ in ${\mathcal S}(F_v^n)$, where $s$ is in $\C$, and 
$\sigma $ is the permutation given by
\begin{center} 
$ \sigma = \left( \begin{array}{ccccccccc} 
1 & 2 & \cdots & n & | & n+1 & n+2 & \cdots & 2n \\
1 & 3 & \cdots & 2n-1 & | & 2 & 4 & \cdots & 2n 
\end{array} \right) $.
\end{center}
If $r$ is odd, say $ r = 2n+1 $, we consider  
\begin{align}\label{J(odd)}
\notag J(s, W_v) = 
\int_{N_{v} \backslash G_{v}}\int_{V_{v}\backslash M_{v}}&W_v\left(\sigma\left(\begin{array}{ccc} I_{n} & X & 0\\ 0 & I_{n} & 0\\ 0 & 0 & 1 \end{array}\right)
\left(\begin{array}{ccc} g & 0 & 0 \\ 0 & g & 0\\ 0 & 0 & 1 \end{array}\right) \right) \hspace{1cm}\\
 &\psi_v(-\tr X)dX |{\det g}|_v^{s-1} dg, 
\end{align}
for each $W_v$ in $\wpiv$, where  $ \sigma $ is the permutation given by
\begin{center}
$ \sigma = \left( \begin{array}{cccccccccc} 
1 & 2 & \cdots & n & | & n+1 & n+2 & \cdots & 2n & 2n+1 \\
1 & 3 & \cdots & 2n-1 & | & 2 & 4 & \cdots & 2n & 2n+1
\end{array} \right) $.
\end{center}
Combining Proposition 1 of Section 7 and Proposition 3 of Section 9 of \cite{JaSh90a},
we can state the following result.
\begin{proposition} \label{js-int-conv}
Let $\pi_v$ be an irreducible unitary generic representation of $\glrfv$. There exists $\eta > 0 $ 
such that the integrals $ J(s, W_v, \phi_v) $ (resp. $J(s, W_v))$ converge absolutely for 
$\text{Re}(s) >  1 - \eta$.
\end{proposition}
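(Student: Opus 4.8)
The plan is to reduce the absolute convergence of the global-looking integrals $J(s,W_v,\phi_v)$ and $J(s,W_v)$ to the convergence of a simpler "gauge" integral by bounding the Whittaker function. First I would recall that for an irreducible unitary generic representation $\pi_v$ of $\glrfv$, every Whittaker function $W_v \in \wpiv$ satisfies a uniform majorization: there is a finite collection of positive real-valued functions (built out of characters of the diagonal torus) such that for $a = \mathrm{diag}(a_1,\dots,a_r)$ in the torus, $|W_v(a)| \ll \prod_i |a_i/a_{i+1}|_v^{c_i}$ times a Schwartz-type decay factor in the coordinates $|a_i/a_{i+1}|_v$, with the exponents $c_i$ controlled by the (unitary) central character and the exponents of $\pi_v$. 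This is the standard Jacquet--Shalika estimate (their Proposition in \S 2 of \cite{JaSh90a}), and the unitarity hypothesis is exactly what pins down the admissible range of exponents. Using the Iwasawa decomposition $G_v = N_v T_v K_v$, together with the relation $W_v(ng) = \psi_v(n) W_v(g)$, the $N_v$-integration collapses and the outer integral becomes an integral over $T_v \times K_v$ against the modulus character; the compactness of $K_v$ makes that factor harmless.

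Next I would handle the inner integral over $V_v \backslash M_v$ in the $X$ variable. The key point, which is where the permutation $\sigma$ does its work, is that conjugating the unipotent block $\left(\begin{smallmatrix} I_n & X \\ 0 & I_n \end{smallmatrix}\right)$ by $\sigma$ and moving it past the diagonal block $\left(\begin{smallmatrix} g & 0 \\ 0 & g \end{smallmatrix}\right)$ produces, after absorbing the part of $X$ that lands in $N_v$ via the character relation, an expression in which only the off-diagonal entries of $X$ survive and they get scaled by ratios of the torus coordinates of $g$. One then bounds $|\psi_v(-\tr X)| = 1$ and uses the majorization above to see that the $X$-integral, for fixed $g$, is dominated by a product of one-dimensional integrals of Schwartz-Bruhat type, each of which converges and contributes at worst a polynomial (in the archimedean case) or bounded (in the $p$-adic case) factor in the torus coordinates. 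The same analysis applies verbatim to the odd case $J(s,W_v)$, the extra trivial bottom-right entry playing no role; the only bookkeeping difference is the shift $s \mapsto s-1$ in the determinant factor.

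The remaining task is to combine these bounds: the whole integral is majorized by $\phi_v(e_n g)$ — which for the torus element $a$ restricts support essentially to $|a_r|_v$ bounded — times $|\det g|_v^{\re{s}}$ (or $\re{s}-1$) times the Whittaker gauge times the modulus character, integrated over $T_v$. This becomes a product of one-variable integrals in the variables $t_i = |a_i/a_{i+1}|_v$, each of the shape $\int t_i^{A + B\re{s}}\,(\text{decay or cutoff})\,d^\times t_i$, and each converges provided $\re{s}$ exceeds some explicit threshold $1 - \eta_i$ depending only on the exponents of $\pi_v$; taking $\eta = \min_i \eta_i > 0$ finishes the argument. The main obstacle is the first step: one must invoke (and, if one wants to be self-contained, reprove) the precise Jacquet--Shalika gauge estimate for Whittaker functions, including the careful tracking of how the unitarity of $\pi_v$ constrains the exponents so that the threshold $1-\eta$ can be taken strictly below $1$; everything after that is a routine, if slightly tedious, convergence bookkeeping exercise with Iwasawa coordinates. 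Since Jacquet and Shalika carried this out in Proposition 1 of \S 7 and Proposition 3 of \S 9 of \cite{JaSh90a}, I would simply cite those, which is presumably what the authors do.
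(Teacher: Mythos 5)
Your proposal is correct and takes essentially the same approach as the paper: the paper gives no proof of its own, but simply states the result as an immediate combination of Proposition 1 of Section 7 and Proposition 3 of Section 9 of \cite{JaSh90a}, exactly as you anticipate at the end of your argument. Your sketch of the underlying gauge-estimate and Iwasawa-coordinate argument is a fair gloss on what those cited propositions contain, but none of it appears in the paper itself.
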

We can use Proposition 2 of Section 7 and Proposition 4 of Section 9 of \cite{JaSh90a} to
obtain the following Proposition for unramified representations.
\begin{proposition}\label{unramvector} Suppose that $F_v$ is a $p$-adic
field and that $\pi_v$ is an unramified representation of $\glrfv$. If $r$ is even
(resp. odd) we can choose $W_{v}^{0} \in \wpiv$ and $\phi_{v}^{0} \in \cs(F_{v}^{n})$ 
(resp. $W_{v}^{0} \in \wpiv$) such that 
\[
J(s,W_v^0,\phi_v^0)=\lgepiv\quad(\text{resp.}\quad J(s,W_v^0)=\lgepiv).
\]
\end{proposition}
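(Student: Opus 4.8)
The plan is to make everything explicit by evaluating the Jacquet–Shalika integral on the spherical Whittaker function, using the Casselman–Shalika formula. First I would take $W_v^0$ to be the normalized spherical Whittaker function in $\wpiv$ (normalized so that $W_v^0(I_r)=1$), and in the even case $\phi_v^0$ to be the characteristic function of $\ov^n$, the standard unramified Schwartz–Bruhat function on $F_v^n$. The Casselman–Shalika formula expresses $W_v^0(\mathrm{diag}(t_1,\dots,t_r))$ on the torus in terms of Schur polynomials evaluated at the Satake parameters of $\pi_v$, and vanishes off the set where the $t_i$ are dominant. Using the Iwasawa decomposition $G_v = N_v \tn(F_v)\glnov$ (and likewise for $\glr(\ov)$), together with the $\glr(\ov)$-invariance of $W_v^0$ and of $\phi_v^0$, one reduces the integrals over $N_v\backslash G_v$ and $V_v\backslash M_v$ to finite sums/integrals over the torus, where the modulus character and $|\det g|_v^s$ contribute explicit powers of $q_v$.

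Next I would carry out the combinatorial bookkeeping: substituting the Casselman–Shalika expression, the integral over $V_v\backslash M_v$ with the character $\psi_v(-\tr X)$ collapses (by the standard orthogonality of $\psi_v$ on $\ov$ versus its complement) to a condition pinning down the ``off-diagonal'' torus coordinates, after which the remaining sum over the diagonal torus coordinates is a sum of Schur polynomials against a geometric-type weight. The key identity to invoke here is the Jacobi–Trudi / Littlewood-type summation formula that produces exactly
\[
\prod_{i<j}(1-\alpha_{i,v}\alpha_{j,v}q_v^{-s})^{-1},
\]
the $\alpha_{i,v}$ being the Satake parameters; this product is by definition $L(s,\pi_v,\wedge^2)$ at an unramified place. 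In the odd case the same computation goes through with the extra fixed coordinate contributing the factors $(1-\alpha_{i,v}q_v^{-(s-1)})^{-1}$ that complete $\wedge^2$ of the $(2n+1)$-dimensional parameter, and the shift $s\mapsto s-1$ in \eqref{J(odd)} is arranged precisely so that the answer is $\lgepiv$ on the nose.

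The main obstacle I anticipate is the combinatorial identity in the middle step: matching the sum over dominant torus coweights of (products of) Schur functions, weighted by the $q_v$-powers coming from the modulus character and $|\det g|_v^s$, against the exterior-square Euler factor. This is the technical heart and is exactly the computation Jacquet and Shalika carry out in Propositions 2 and 4 of Sections 7 and 9 of \cite{JaSh90a}; I would organize the argument so as to cite those computations directly rather than reproduce the symmetric-function manipulations, since Proposition \ref{unramvector} is stated merely as a repackaging of their results for our normalization. The only genuinely new point to check is that the choice of $\phi_v^0$ and the normalization of $W_v^0$ produce the $L$-function with no spurious extra factor, which follows from evaluating both sides at $g=I_n$, $X=0$.
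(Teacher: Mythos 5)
Your approach is essentially the same as the paper's, which simply cites Proposition 2 of Section 7 and Proposition 4 of Section 9 of Jacquet--Shalika for this unramified computation; your outline of that computation (Casselman--Shalika formula, Iwasawa reduction to a torus sum, symmetric-function identity producing $\prod_{i<j}(1-\alpha_{i,v}\alpha_{j,v}q_v^{-s})^{-1}$) is an accurate description of what those propositions establish, and you are right that the cleanest route is to cite them directly. One small imprecision: the normalization is not verified by ``evaluating both sides at $g=I_n$, $X=0$'' --- the left side is an integral over $g$ and $X$, not a function of them --- but by matching constant terms of the $q_v^{-s}$-expansion, which equals $1$ because the contribution of $g\in\glnov$ and integral $X$ gives $W_v^0(I_r)=1$ and $\phi_v^0(e_n)=1$ under the standard choices.
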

In both the $p$-adic and the archimedean cases, it is not hard to see that the integrals above
can be meromorphically continued to the whole of $\C$. 

Let $F_v$ be a $p$-adic field. 
It is easy to see from the proof of the above theorem that the integrals 
$J(s, W_v, \phi_v)$ (resp. $J(s, W_v)$) are rational functions in $ q_v^{-s} $ (see Proposition 
2.3 of \cite{Kewat11}). For $g$ in $G_v$, define elements $g_1$ in $\gltwonfv$ and 
$g_2$ in ${\rm GL}_{2n+1}(F_v)$ as follows:
\[
g_{1} = \left(\begin{array}{cc} g & 0 \\ 0 & g \end{array}\right)\quad\text{and}\quad 
g_{2} = \left(\begin{array}{cc} g_{1} & 0 \\  0& 1 \end{array}\right).
\]
We have
\[
J(s, \pi_v(g_{1})W_v, R(g)\phi_v) = |\mbox{det} g|_v^{-s} J(s, W_v, \phi_v),
\]
where $R$ denotes the right translation action of $G_v$ on $\mathcal{S}(F_v^{n})$, and
\[
J(s, \pi_v(g_{2})W_v) = |\mbox{det} g|_v^{-s} J(s, W_v).
\]
This shows that the $\C$-vector space $\I(\pi_v)$ generated by the integrals 
$J(s, W_v, \phi_v)$ (resp. $J(s, W_v)$) in $\C[q_v^{-s},q_v^{s}]$) is a fractional ideal of 
$\C[q_v^{-s}, q_v^{s}]$. Since $\C[q_v^{-s}, q_v^{s}]$ is a principal ideal domain, 
the fractional ideal $\I(\pi_v)$ is a principal fractional ideal. We now invoke the following theorem 
of Belt (see Theorem 2.2 of \cite{Belt11}).

\begin{theorem}\label{nv-lf} Let $v$ be any place of $F$.  For each 
$s_0\in \C$, there exist $W_v$ in $\wpiv$ and $\phi_v$ in ${\mathcal S}(F_v^n)$ 
(resp. $W_v$ in $\wpiv$) such that $J(s_0,W_v,\phi_v)\ne 0$ (resp. $J(s_0,W_v)\ne 0$).
\end{theorem}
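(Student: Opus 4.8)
The plan is to prove non-vanishing of the local integrals at an arbitrary point $s_0 \in \C$ by reducing to the already-known non-vanishing at points with large real part. By Proposition \ref{js-int-conv}, for $\re{s}$ sufficiently large the integrals $J(s,W_v,\phi_v)$ (resp. $J(s,W_v)$) converge absolutely, and by unfolding one checks that they are not identically zero as functions of $(W_v,\phi_v)$ in that half-plane; this is the easy base case. The issue is to propagate non-vanishing to \emph{all} $s_0$, including points to the left of the domain of convergence where only the meromorphic continuation makes sense.

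First I would fix $s_0$ and suppose, for contradiction, that $J(s_0, W_v, \phi_v) = 0$ for every choice of $W_v \in \wpiv$ and $\phi_v \in \cs(F_v^n)$ (I treat the even case; the odd case is identical with $\phi_v$ suppressed). In the $p$-adic case, I would exploit the quasi-invariance identities recorded just before the theorem, namely $J(s, \pi_v(g_1)W_v, R(g)\phi_v) = |\det g|_v^{-s} J(s,W_v,\phi_v)$. Because the fractional ideal $\I(\pi_v) \subset \C[q_v^{-s}, q_v^s]$ is generated by a single element $P(q_v^{-s})$, and the integrals for varying data span $\I(\pi_v)$, vanishing of \emph{all} integrals at $s_0$ forces $P(q_v^{-s_0}) = 0$; conversely, if $P(q_v^{-s_0}) \neq 0$ we are done immediately. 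So the real content is to rule out zeros of the generator $P$. Here I would appeal to the structure of $\I(\pi_v)$ together with the local functional equation and the explicit computation of the generator at the unramified places (Proposition \ref{unramvector}): since the unramified integral computes $\lgepiv = L(s,\wedge^2 \rho_{F_v}(\pi_v))$, and for unramified $\pi_v$ this is of the form $\prod (1 - \alpha_{ij} q_v^{-s})^{-1}$ which has no zeros, the generator divides into the Euler factor and cannot introduce spurious zeros; for ramified $\pi_v$ one uses a global argument or a multiplicativity/induction argument on the inducing data. At the archimedean places, $q_v^{-s}$ is no longer the right variable, so instead I would use the fact that $J(s,W_v,\phi_v)$ extends to a meromorphic function of $s$ of moderate growth, together with a density/holomorphy argument: the family $\{J(s, \cdot, \cdot)\}$, divided by a suitable scalar $L$-factor, has no common zero because the normalized integrals realize a nonzero Whittaker–Shalika functional that varies holomorphically, and a functional that is nonzero somewhere cannot vanish on the whole space at a single point without contradicting the Bernstein-type continuation principle.

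The main obstacle is precisely the second step: showing the single generator $P(q_v^{-s})$ (equivalently, the normalized archimedean integral) has no zeros at all, i.e. that the fractional ideal $\I(\pi_v)$ has a \emph{unit times $L$-factor} as its generator rather than an $L$-factor divided by some polynomial with zeros. In the setting of this paper the local theory is incomplete — one does not yet know the generator of $\I(\pi_v)$ equals $\ljsepiv$ at ramified places (this is only established later, after Theorem \ref{loceqthm}). So the honest route is to prove the bare non-vanishing statement \emph{without} identifying the generator: use the local functional equation relating $J(s, W_v, \phi_v)$ to $J(1-s, \tilde W_v, \hat\phi_v)$ via a $\gamma$-factor, combine it with absolute convergence and non-vanishing in \emph{both} half-planes $\re{s} \gg 0$ and $\re{s} \ll 0$, and then invoke the rationality in $q_v^{-s}$ (resp. meromorphy of moderate growth) to conclude that the common-zero locus of the family is empty, since a nonzero rational function cannot vanish at $s_0$ for \emph{every} choice of data unless $s_0$ is a common pole, which is excluded by choosing data making the integral holomorphic and nonzero near $s_0$. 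I expect the archimedean case to require the most care, as one must substitute the clean PID/fractional-ideal argument with an analytic argument about holomorphic families of continuous functionals.
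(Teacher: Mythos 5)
The paper does not prove Theorem \ref{nv-lf} at all; it is quoted verbatim as Theorem~2.2 of Belt \cite{Belt11}, an external input on which the whole strategy of the paper rests. So there is no in-paper proof to compare against, and any proposal you offer is really a proposal for a result the authors deliberately import.

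That said, your proposal has a concrete circularity problem that would make it unusable in the paper's logical architecture. At two points you lean on ``the local functional equation relating $J(s,W_v,\phi_v)$ to $J(1-s,\tilde W_v,\hat\phi_v)$ via a $\gamma$-factor.'' In this paper that local functional equation for the Jacquet--Shalika integrals is \emph{not} available a priori: it is established only in Section~\ref{locfneqn} (Theorem~\ref{locfnaleqn}) as a corollary of Theorem~\ref{sqinteqthm}, whose proof in turn invokes Theorem~\ref{nv-lf} at a crucial step (to rule out zeros of $G_2(s,\tilde\Pi)$ along a vertical arithmetic progression). So you would be using the non-vanishing result to prove the non-vanishing result. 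Likewise, your fallback for ramified $p$-adic $\pi_v$ (``a global argument or multiplicativity/induction on the inducing data'') is exactly the kind of argument the present paper is trying to build on top of Belt's theorem, not the other way around. Even your $p$-adic reduction to ``show the generator $P$ of $\I(\pi_v)$ has no zeros'' is not cashed out: the unramified computation (Proposition~\ref{unramvector}) gives a nonzero \emph{element} of $\I(\pi_v)$ but does not, by itself, identify the generator (this is Remark~\ref{gcdnotknown}), and for ramified representations you offer no mechanism at all. Finally, the archimedean case is handled only by an allusion to a ``Bernstein-type continuation principle,'' which is not a proof. Belt's actual argument is purely local and direct: by choosing $W_v$ (via the Kirillov/Whittaker model, using the freedom to prescribe restrictions of Whittaker functions) and $\phi_v$ with carefully controlled support, one reduces $J(s_0,W_v,\phi_v)$ to an explicitly computable nonzero quantity, with no appeal to global automorphic machinery, to the identification of the local $L$-factor, or to any local functional equation. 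If you want a self-contained proof you would need to reconstruct an argument of that flavor, not the route sketched here.
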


This theorem extends to arbitrary $s_0$ an earlier result of 
Jacquet and Shalika for $s_0=1$ (see \cite{JaSh90a}).
Using the theorem above we see that $1$ lies in $\I(\pi_v)$. As a result 
we can make the following definition.
\begin{definition}
 We define the exterior square $L$-function $ L_{JS}(s, \pi_v,\wedge^{2})$ as the generator of  
 $\I(\pi_v)$ of the form $\ljsepiv$ = $\frac{1}{P(q_v^{-s})}$, where
 $P(q_v^{-s})$ is a polynomial in $\C[q_v^{-s}]$ and $P(0) = 1$.
\end{definition}
\begin{remark}\label{gcdnotknown} As remarked before, although Proposition \ref{unramvector}
allows us to choose local data so that the integral representation gives the $L$-function $\lgepiv$ when 
$\pi_v$ is unramified, it is by no means clear that this choice yields the $L$-function $\ljsepiv$ defined above.
Indeed, it is only after we prove Theorem \ref{loceqthm} that we will be able to establish this fact, and even then,
only in the even case. In the odd case, we are unable to establish this in this paper.
\end{remark}
\begin{remark} We will also use Belt's result in the archimedean case, but not to define the $L$-function.
We emphasise that in this paper, the archimedean $L$-function $\ljsepiv$ is defined via the Local Langlands
Correspondence (as it was, by Jacquet and Shalika). The question of whether the integral representation
yields the $L$-function $\lgepiv$ for a suitable choice of local data, is an open one, as it is in the 
$p$-adic unramified case. 
\end{remark}
If $F_v$ is a $p$-adic field and $\pi_v$ is square integrable Theorem M of \cite{Kewat11} goes further.
\begin{theorem} \label{sq-int-JS}
Let $ \pi_v$ be an irreducible smooth square integrable representation of $\glrfv$, where $F_v$ is a $p$-adic field. 
Then the $L$-function $L_{JS}(s, \pi_v, \wedge^{2}) $ is regular in the region $ \text{Re}(s) > 0 $, if $r$ is even, 
 and in the region $ \text{Re}(s) \geq 0 $, if $r$ is odd.
\end{theorem}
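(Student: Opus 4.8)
The plan is to deduce the regularity of $\ljsepiv$ from the convergence properties of the Jacquet--Shalika integrals themselves. By definition of $\ljsepiv$ we have $\I(\pi_v)=\ljsepiv\cdot\C[q_v^{-s},q_v^{s}]$, so on the one hand every integral $J(s,W_v,\phi_v)$ (resp.\ $J(s,W_v)$) equals $\ljsepiv$ times an element of $\C[q_v^{-s},q_v^{s}]$, and on the other hand $\ljsepiv=1/P(q_v^{-s})$ is a finite $\C[q_v^{-s},q_v^{s}]$-linear combination of such integrals. Hence $\ljsepiv$ is holomorphic on any region on which all the integrals $J(s,W_v,\phi_v)$ (resp.\ $J(s,W_v)$) are holomorphic. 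Each of these integrals is a rational function of $q_v^{-s}$ that coincides with its defining integral wherever the latter converges absolutely, so it suffices to prove the following: for all $W_v\in\wpiv$ and all $\phi_v\in\cs(F_v^{n})$ the defining integral of $J(s,W_v,\phi_v)$ converges absolutely and locally uniformly for $\re{s}>0$ when $r=2n$; and for all $W_v\in\wpiv$ the defining integral of $J(s,W_v)$ converges absolutely on an open half-plane containing $\re{s}\ge 0$ when $r=2n+1$.

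First I would apply the Iwasawa decomposition $G_v=\Nnfv\,A\,K$, with $A$ the diagonal torus of $\gln$ and $K$ a maximal compact subgroup, to the outer integration over $N_v\backslash G_v$; the $K$-integration runs over a compact set and does not affect convergence, and in the even case $\phi_v(e_ng)$ confines the last torus coordinate $a_n$ to $|a_n|_v\le C$ (though it supplies no extra decay). Writing $a=\mathrm{diag}(a_1,\dots,a_n)\in A$ and setting
\[
\Phi(g)=\int_{V_v\backslash M_v}W_v\!\bigl(\sigma\,u(X)\,g_1\bigr)\,\psi_v(-\tr X)\,dX,
\]
with $g_1=\mathrm{diag}(g,g)$ in the even case (and the analogous function with $g_2=\mathrm{diag}(g_1,1)$ in place of $g_1$, together with the unipotent from \eqref{J(odd)}, in the odd case), the problem reduces to the absolute convergence of the integral over $A$ of $|\Phi(a)|$ weighted by $|\det a|_v^{\re{s}}\,\delta_{\bn}(a)^{-1}$ in the even case (with $a_n$ truncated to $|a_n|_v\le C$), and by $|\det a|_v^{\re{s}-1}\,\delta_{\bn}(a)^{-1}$ in the odd case, where $\delta_{\bn}$ denotes the modular character of the Borel subgroup of $\gln$.

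The core of the argument is a decay estimate for $\Phi$ on $A$. The inner $X$-integral is absolutely convergent --- this is part of the Jacquet--Shalika construction in \cite{JaSh90a} --- and the resulting function $\Phi$ transforms on the left under $\Nn$ by $\psi_v$. For the size of $\Phi(a)$ I would invoke Casselman's characterisation of square integrability: since $\pi_v$ is square integrable, the exponents of every Jacquet module of $\pi_v$ satisfy the strict inequalities of Casselman's criterion, which quantitatively produces a constant $\varepsilon>0$ such that $b\mapsto W_v(b)\,\delta_{{\rm B}_r}(b)^{-1/2}$ is supported on the set where all the simple-root quotients $|b_i/b_{i+1}|_v$ are bounded above, and on that set is $O\bigl(\prod_i|b_i/b_{i+1}|_v^{\varepsilon}\bigr)$ --- a strict improvement over the merely tempered bound. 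Estimating $W_v\bigl(\sigma\,u(X)\,g_1\bigr)$ by these torus asymptotics, expressing the resulting root quotients in terms of the $a_i/a_{i+1}$ (and the size of $X$), and carrying out the $X$-integration uniformly in $a$, one obtains a bound $|\Phi(a)|\ll\delta_{\bn}(a)\,\prod_i|a_i/a_{i+1}|_v^{\varepsilon'}$ on the region where $\Phi$ does not vanish, for some $\varepsilon'>0$.

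Feeding this into the torus integral, the powers of $\delta_{\bn}$ cancel, and the substitution $t_i=a_i/a_{i+1}$ ($i<n$), $t_n=a_n$, turns the integral into a finite product of geometric series in the $q_v^{-\ell_j(s)}$, where the $\ell_j$ are affine-linear with $\re{\ell_j(s)}=\re{s}+c_j$ and $c_j\ge 0$; this yields absolute convergence for $\re{s}>0$ in the even case. In the odd case there is no Schwartz factor, but the fixed coordinate $1$ (which is not twisted by $s$) contributes an additional simple-root quotient that is bounded above and carries the decay exponent $\varepsilon$, and together with the normalisation $|\det g|_v^{s-1}$ this forces every $c_j$ to be strictly positive, so the series converge on an open half-plane containing $\re{s}\ge 0$. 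The main obstacle is precisely this decay estimate for $\Phi$ with a \emph{uniform} positive exponent: it requires the quantitative form of Casselman's criterion (supplying the strict gain $\varepsilon>0$), a careful combinatorial analysis of how the shuffle $\sigma$ and the doubling $g\mapsto\mathrm{diag}(g,g)$ act on torus coordinates so that the net exponent stays positive along \emph{every} coordinate direction of the $t_i$ and not only the dominant one, and uniformity of the $X$-integration in $a$. Tracking the constants $c_j$ through this analysis is exactly what separates the open half-plane $\re{s}>0$ in the even case from the closed half-plane $\re{s}\ge 0$ in the odd case.
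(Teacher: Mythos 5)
The paper does not actually prove this statement: it is imported verbatim as Theorem~M of \cite{Kewat11} (a companion paper by the first author), and no argument is reproduced here. So there is no in-paper proof to compare against; I can only assess your proposal on its own terms and against what the cited reference does.

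Your overall strategy is the correct and, as far as I know, the one used in \cite{Kewat11}. The opening reduction is sound: since $L_{JS}$ is a finite $\C[q_v^{-s},q_v^{s}]$-linear combination of the integrals $J(s,W_v,\phi_v)$ (resp.\ $J(s,W_v)$), it is holomorphic on any region where all those integrals converge absolutely, so the theorem reduces to a convergence statement. The use of the Iwasawa decomposition, the role of the Schwartz function in truncating $|a_n|_v$ in the even case versus the extra fixed coordinate $1$ playing that role in the odd case, and above all the appeal to Casselman's criterion for square integrability to produce a strict positive gain $\varepsilon>0$ in the Whittaker asymptotics, are exactly the correct ingredients and correctly explain the dichotomy between $\re{s}>0$ and $\re{s}\ge 0$.

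That said, the proposal stops precisely where the real work begins, and you say so yourself. The claimed bound $|\Phi(a)|\ll\delta_{\bn}(a)\prod_i|a_i/a_{i+1}|_v^{\varepsilon'}$ after the $X$-integration is asserted, not derived; establishing it requires (i) the Casselman--Shalika asymptotic expansion of $W_v$ along the torus of ${\rm GL}_r$ written as a finite sum of characters times Schwartz functions, with exponents strictly dominating $\delta^{1/2}$ because $\pi_v$ is square integrable, (ii) the combinatorics of conjugation by $\sigma$ and the doubling $g\mapsto\mathrm{diag}(g,g)$ on the ${\rm GL}_r$ torus, which is what turns the $\varepsilon$-gain along \emph{every} simple root of ${\rm GL}_r$ into a positive gain along \emph{every} direction $t_i$ of the ${\rm GL}_n$ torus rather than just the dominant one, and (iii) uniformity in $a$ of the $X$-integral, which is not automatic because $V_v\backslash M_v$ is non-compact and the $X$-dependence of the Whittaker argument shifts the torus coordinates. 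None of these is a formality, and (ii) in particular is exactly where the distinction $\re{s}>0$ versus $\re{s}\ge 0$ is decided; a single direction with $c_j=0$ in the even case, or $c_j<0$ in the odd case, would break the claimed half-plane. As a roadmap this is faithful to the cited proof, but as written it is a sketch with the central estimate left open.
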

\begin{remark} In Theorem N of \cite{Kewat11}, the first author also proved the non-vanishing of
the local integrals $\jswvphiv$ and $\jswv$ in $\res>0$ for square integrable
representations over a $p$-adic field.
\end{remark}
If $F_v$ is an archimedean local field, we can extract the following proposition from \cite{JaSh90a},
or from the more explicit calculation in \cite{Belt11} (see Proposition 3.4 and the proof of Theorem 2.2
in Section 3.4).
\begin{proposition} \label{arch-fp}
Let $a$ and $b$ be real numbers. There is a finite set of 
points $P(a,b)$ in the strip $ a \leq \res \leq b$ (independent of the choice of $W_v$ and $\phi_v$) 
such that the set of poles of the integrals $\jswvphiv$ (resp. $\jswv$) is contained in $P(a,b)$.
\end{proposition}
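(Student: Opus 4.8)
\emph{Proof strategy.}
The plan is to make the meromorphic continuation of $\jswvphiv$ (resp. $\jswv$) explicit enough to exhibit, in any vertical strip, a finite set of candidate pole locations that depends only on $\pi_v$. I would treat the even case $r=2n$ in detail; the odd case is the same with $\phi_v$ omitted. First I would apply the Iwasawa decomposition $g=n'ak$ on $G_v=\gln(F_v)$, with $a$ in the diagonal torus $A$ and $k$ in a maximal compact $K$. Under $\sigma$, the diagonally embedded $n'$ together with the integration variable $X$ sweeps out a unipotent subgroup of ${\rm N}_{2n}(F_v)$ on which $W_v$ transforms by $\psi_v$; absorbing it (which shifts the $da$-measure by a fixed modular character and leaves a harmless partial Fourier transform in the surviving $X$-variables), and using that one may assume $W_v$ is $K$-finite, the integral becomes a finite sum of integrals over $A\cong(F_v^\times)^n$ of the schematic shape
\[
\int_{A} W_v\!\big(\sigma\,\widehat a\,k_0\big)\,\phi_v\!\big(e_n a k_0\big)\,|\det a|_v^{\,s}\,\delta(a)^{-1}\,da,
\]
where $\widehat a$ denotes the diagonal embedding of $a$ in $\gltwonfv$, $k_0$ runs over finitely many elements of $K$, and $\delta$ is a fixed character.

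The next ingredient is the asymptotic behaviour of the archimedean Whittaker function of $\pi_v$ on the torus. As $a$ moves into the positive chamber it decays faster than any polynomial (classical, going back to \cite{JaSh90a}), and as $a$ approaches a wall, i.e. $a_i/a_{i+1}\to 0$, it admits an asymptotic expansion whose leading terms are $(a_i/a_{i+1})^{\mu}$ times a polynomial in $\log|a_i/a_{i+1}|$ times a lower-rank Whittaker-type function, with $\mu$ ranging over a finite set $E$ of exponents determined by the infinitesimal character of $\pi_v$ alone (Wallach's expansion of generalized Whittaker functions, made explicit for the relevant group in Proposition~3.4 of \cite{Belt11}). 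Substituting these expansions coordinate by coordinate, and using that $\phi_v$ is Schwartz in the last torus coordinate and that $W_v$ decays rapidly in the remaining chamber directions, each one-dimensional factor of the integral above takes the form $\int_0^\infty y^{\,s+c}\,\Phi(y)\,\tfrac{dy}{y}$ with $\Phi$ smooth, rapidly decreasing at $\infty$, and with an asymptotic expansion $\Phi(y)\sim\sum_{\mu,j}c_{\mu,j}\,y^{\mu}(\log y)^{j}$ at $0$. Such an integral converges for $\re{s}$ large and continues meromorphically with poles confined to $s\in\{-c-\mu-k:\mu\in E,\ k\in\Z_{\ge 0}\}$, of bounded order, where $c$ is a $\rho$-type shift depending only on the group. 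Multiplying over the finitely many coordinates and summing the finitely many terms, the poles of $\jswvphiv$ lie in a set of the form $\{s_0-k:s_0\in S,\ k\in\Z_{\ge 0}\}$ for a finite set $S$ assembled from $E$ and the fixed shifts, hence independent of $W_v$ and $\phi_v$; intersecting with $a\le\re{s}\le b$ leaves the finite set $P(a,b)$.

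The main obstacle is analytic rather than combinatorial: justifying the interchange of the meromorphic continuation with the several-variable torus integration. This needs uniform ``gauge'' bounds on $W_v$ and on the remainders of its asymptotic expansion, valid locally uniformly for $s$ in a tube --- exactly what the Casselman--Wallach moderate-growth estimates together with Wallach's expansion supply, and what is carried out in the proof of Theorem~2.2 of \cite{Belt11}. A secondary bookkeeping point is that confluent exponents (values of $\mu$ differing by integers) produce the logarithmic factors and raise pole orders but create no new pole locations. For a fully self-contained treatment I would instead follow Belt's inductive reduction of the $\gln$ computation to a rank-one integral, where the resulting one-variable integrals are classical $\Gamma$-type integrals whose poles sit visibly at a lattice of points determined by the inducing data.
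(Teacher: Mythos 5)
Your proposal is correct and follows essentially the same route as the paper: both reduce the archimedean integral to a finite sum of products of entire functions and one‑variable Tate/Mellin‑type integrals, and then locate the poles via the finite set of exponents coming from the asymptotic expansion of the Whittaker function (determined by $\pi_v$ alone, not by $W_v$ or $\phi_v$). The only difference is that the paper cites Propositions~1, 3, and~6 of \cite{JaSh90a} (and Belt's Proposition~3.4) for this decomposition and for the independence of the exponents, whereas you reconstruct that content explicitly via the Iwasawa decomposition and Wallach's expansion — a more self‑contained presentation of the same argument.
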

\begin{proof}
 It is easy to see from the proof of Proposition 1 of Section 7 and Proposition 3 of Section 9 
 of \cite{JaSh90a} that the integral  $\jswvphiv$ (resp. $\jswv$) is a finite sum of products of entire 
 functions and Tate integrals. The exponents of the quasi-characters occurring in the Tate integrals 
 are finite in number and are independent of the choice of $W_v$ and $\phi_v$ by Proposition 6 of 
 \cite{JaSh90a}. Since these exponents determine the poles of Tate integrals, and since the latter
have at most a finite number of poles in any vertical strip (page 155 of \cite{JaSh90a}), the existence
of the finite set $P(a,b)$ follows.
\end{proof}

\subsection{The Global Theory}\label{irmglobal}
As before, let $F$ be a number field. Let $\Phi$ be a function in $ \cs(\af^{n})$, the space of 
Schwartz-Bruhat functions on $\af^n$. We denote by ${\rm P}_{n-1,n}$ the parabolic 
subgroup of type $(n-1, 1)$ in ${\rm G}$. Let $\pi$ be a unitary cuspidal automorphic representation 
of ${\rm GL}_r$. We denote by $\omega_{\pi}$ the central character of $\pi$. For a non-trivial 
additive character $\psi$ of $\af/F$ and a form $\varphi$ in the space of $\pi$, we consider, when 
$r = 2n$, the integral
\begin{align} \label{global_inteven}
\notag I(s, \varphi, \Phi) = \int_{{\rm G}(F) \backslash G/Z)} \int_{M(F) \backslash M} & \varphi \left(\left(\begin{array}{cc} I_{n} & X \\ 0 & I_{n} \end{array}\right) \left(\begin{array}{cc} g & 0 \\ 0 & g \end{array}\right) \right)\\& 
\psi(\tr X)dX E(g, \Phi, s)dg,
\end{align}
where $  E(g, \Phi, s)$ is the Eisenstein series 
\[
  E(g, \Phi, s) = \displaystyle{\sum_{\gamma \in {\rm P}_{n-1,n}(F) \backslash {\rm G}(F)}}f(\gamma g, s),
\]
with
\[
 f(g, s) = |{\det g}|^{s} \int_{\af^{\times}} \Phi(e_{n}ag) |a|^{ns} \omega_{\pi}(a)da.
\]

If $r = 2n+1$, consider 
\begin{align} \label{global_intodd}
 \notag I(s, \varphi)=
\int_{G(F) \backslash G} \int_{F^{n} \backslash\af^{n}} \int_{M(F) \backslash M} \varphi &\left(\left(\begin{array}{ccc} I_{n} & X & Y \\ 0 & I_{n} & 0\\ 0 & 0 & 1\end{array}\right) \left(\begin{array}{ccc} g & 0 & 0 \\ 0 & g & 0\\ 0 & 0 & 1 \end{array}\right) \right)\\& \psi(\tr X)dX dY |{\det g}|^{s-1}dg.
\end{align}
\indent
Jacquet and Shalika have shown that the integral $  I(s, \varphi) $ converges absolutely for all $s$ (see Proposition 1 of Section 9 of \cite{JaSh90a}). They have also shown that the integral $I(s, \varphi, \Phi)$ converges for all $s$ except at the singularities of the Eisenstein series (see Section 5 of \cite{JaSh90a}). Lemma 4.2 of \cite{JASh811} shows that the
following theorem holds.
\begin{theorem}
 The Eisenstein series $  E(g, \Phi, s) $ is absolutely convergent for $\text{Re}(s) > 1$. It has a meromorphic continuation to the entire complex plane and satisfies the functional equation
\[ 
E(g, \Phi, s) = E(^{\iota}g, \hat{\Phi}, 1-s). 
\]
\end{theorem}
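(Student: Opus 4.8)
The plan is to realize $E(g,\Phi,s)$ as a Mellin transform in a single complex variable of the classical (non-adelic or, here, adelic) Siegel-type Eisenstein series on the $(n-1,1)$ parabolic, and to reduce the analytic continuation and functional equation to the already-known theory for the theta/Eisenstein distribution attached to a Schwartz function on $\af^n$ via Poisson summation. First I would check absolute convergence for $\re{s}>1$: after unfolding $f(g,s)$ using $\mathrm{P}_{n-1,n}(F)\backslash\mathrm{G}(F)$ and the integral over $\af^\times$, the series $\sum_{\gamma}f(\gamma g,s)$ becomes (up to a zeta factor and the central character) a sum over the nonzero lattice points $F^n\setminus\{0\}$ of $\Phi$ evaluated at $\xi g$ times $\|\xi g\|^{-ns}$-type weights; this converges for $\re{s}$ large, and the minimal-parabolic comparison gives the precise bound $\re{s}>1$. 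This is the routine part and I would only indicate it.

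The heart of the argument is the functional equation, and here the cited Lemma~4.2 of \cite{JASh811} does the work: one writes the completed Eisenstein series as an integral over $\af^\times/F^\times$ of a theta series $\sum_{\xi\in F^n}\Phi(\xi a g)$ (minus its constant term), splits the $a$-integral at $|a|=1$, and applies the adelic Poisson summation formula $\sum_{\xi\in F^n}\Phi(\xi h)=|\det h|^{-1}\sum_{\xi\in F^n}\hat\Phi({}^\iota h\,\xi)$. The two halves of the split integral are each entire in $s$, and Poisson summation interchanges them while sending $\Phi\mapsto\hat\Phi$, $s\mapsto 1-s$, and $g\mapsto{}^\iota g$ (the transpose-inverse coming from the change of variables $h=ag\mapsto{}^\iota h$). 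The leftover boundary terms — the contributions of the constant term, i.e. the $\xi=0$ piece — produce the finitely many poles (coming from $\int_{|a|\le 1}|a|^{ns}\,d^\times a$ and its dual), so $E(g,\Phi,s)$ is meromorphic on all of $\C$ with the asserted functional equation $E(g,\Phi,s)=E({}^\iota g,\hat\Phi,1-s)$.

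I expect the main obstacle to be purely bookkeeping rather than conceptual: matching normalizations. One must track the factor $|a|^{ns}\omega_\pi(a)$ inside $f(g,s)$ against the $|\det h|^{-1}$ from Poisson summation and the $|\det g|^s$ prefactor, and verify that the central character $\omega_\pi$ (which is unitary, since $\pi$ is unitary cuspidal) is carried correctly — in particular that the dual side naturally features $\omega_\pi^{-1}=\bar\omega_\pi$, which is consistent with the statement because the functional equation as written leaves the $\pi$-data implicit in $\Phi$ via $f(g,\Phi,s)$. A secondary point requiring care is the exact location and order of the poles: the potential poles of $E(g,\Phi,s)$ are at $s=0$ and $s=1/n$ (from the degenerate term), with the pole at $s=1$ of the naive unfolded series being cancelled by the zeta normalization built into the definition of $f(g,s)$; for the applications in later sections only the fact that there are finitely many poles, all in $0\le\re{s}\le 1$, is needed, so I would state the functional equation and meromorphic continuation and refer to \cite{JASh811} and Section~5 of \cite{JaSh90a} for the finer pole analysis rather than reproving it here.
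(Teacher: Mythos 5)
Your proposal is correct and reproduces the standard Poisson-summation/theta-split argument behind the reference the paper actually cites: the paper gives no proof of this theorem, merely invoking Lemma~4.2 of Jacquet--Shalika (1981), whose content is precisely the Riemann-style argument you sketch (unfold to an integral over $\af^\times/F^\times$ of the theta series, split at $|a|=1$, apply Poisson summation, track the $\xi=0$ boundary terms). One small correction: when $\omega_\pi$ is trivial the poles of the continued $E(g,\Phi,s)$ sit at $s=0$ and $s=1$, not at $s=0$ and $s=1/n$; the pole at $s=1/n$ is a pole of the individual section $f(g,s)$ (a Tate integral in $|a|^{ns}$), not of the Eisenstein series itself, though since you explicitly defer the fine pole analysis to the references this does not affect your argument.
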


We will need the two Weyl elements
\[
w_{2n} = \left( \begin{array}{ccc} &&1 \\ & \iddots & \\ 1&& \end{array} \right)\quad{and}\quad
w_{n,n} = \left( \begin{array}{cc} 0 &I_{n}\\ I_{n}& 0\end{array} \right)
\]
in $GL_{2n}$. From the above theorem and the proof of Proposition 1 of 
Section 9 of \cite{JaSh90a} (see page 220), we get the following theorem.
\begin{theorem}\label{global_funteq}
The integrals \eqref{global_inteven} and \eqref{global_intodd} satisfy the functional equations
\[ I(s, \varphi, \Phi) = I(1-s, \rho(w_{n,n})\tilde{\varphi}, \hat{\Phi}) \]
and \[ I(s, \varphi) = I(1-s, \varphi'), \]
where $\tilde{\varphi}(g) = \varphi(^\iota g)$, $\varphi'$ is a suitable translate of 
$\tilde{\varphi}$, and $\rho$ denotes the right translation action. 
\end{theorem}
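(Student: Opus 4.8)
The plan is to deduce each functional equation from the structure one level down: in the even case from the functional equation $E(g,\Phi,s)=E({}^{\iota}g,\widehat{\Phi},1-s)$ of the Eisenstein series recalled above, and in the odd case from the left ${\rm GL}_{2n+1}(F)$-invariance of the cusp form $\varphi$. In both cases the mechanism is the substitution $g\mapsto{}^{\iota}g$ on the outer integral, followed by a conjugation by a Weyl element that restores the standard shape of the integrand and is then absorbed using automorphy.

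\emph{The even case.} I would work first in the half-plane $\re{s}>1$, where the Eisenstein series, and hence the integral $I(s,\varphi,\Phi)$ of \eqref{global_inteven}, converges absolutely; the identity then extends to all of $\C$ by meromorphic continuation, since $I(s,\varphi,\Phi)$ is finite off the poles of $E(\cdot,\Phi,s)$ by the results already recalled. Substituting the functional equation of the Eisenstein series into \eqref{global_inteven} and replacing $g$ by ${}^{\iota}g$ (a measure-preserving bijection of ${\rm G}(F)\backslash G/Z$, as ${}^{\iota}$ fixes $Z$ and preserves Haar measure on $G$) turns the Eisenstein factor into $E(g,\widehat{\Phi},1-s)$ and, using that ${}^{\iota}$ is a group homomorphism and $\widetilde{\varphi}(h)=\varphi({}^{\iota}h)$, turns the cusp-form factor into
\[
\varphi\!\left(\begin{pmatrix}I_n & X\\ 0 & I_n\end{pmatrix}\begin{pmatrix}{}^{\iota}g & 0\\ 0 & {}^{\iota}g\end{pmatrix}\right)=\widetilde{\varphi}\!\left(\begin{pmatrix}I_n & 0\\ -{}^{t}X & I_n\end{pmatrix}\begin{pmatrix}g & 0\\ 0 & g\end{pmatrix}\right).
\]
Conjugating by $w_{n,n}$, which centralises $\mathrm{diag}(g,g)$ and interchanges the upper and lower unipotent radicals of the standard $(n,n)$ parabolic, and absorbing the resulting left-hand factor $w_{n,n}\in{\rm GL}_{2n}(F)$ by the left $F$-invariance of $\widetilde{\varphi}$, the integrand becomes $\big(\rho(w_{n,n})\widetilde{\varphi}\big)$ evaluated at $\left(\begin{smallmatrix}I_n & -{}^{t}X\\ 0 & I_n\end{smallmatrix}\right)\mathrm{diag}(g,g)$. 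A final change of variables $X\mapsto-{}^{t}X$ on $M(F)\backslash M$ (measure preserving, and carrying $\psi(\tr X)$ to $\overline{\psi}(\tr X)$, which is the character natural for $\widetilde{\pi}$) reassembles exactly the integral of \eqref{global_inteven} at the datum $(1-s,\rho(w_{n,n})\widetilde{\varphi},\widehat{\Phi})$, so that $I(s,\varphi,\Phi)=I(1-s,\rho(w_{n,n})\widetilde{\varphi},\widehat{\Phi})$.

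\emph{The odd case.} Here \eqref{global_intodd} contains no Eisenstein series and converges absolutely for every $s$, so no analytic continuation is needed. I would argue analogously: insert a suitable Weyl element $w$ of ${\rm GL}_{2n+1}$ for free, using that $\varphi$ is left ${\rm GL}_{2n+1}(F)$-invariant, then apply $g\mapsto{}^{\iota}g$ together with changes of variable in $X$ and $Y$. The substitution sends $|\det g|^{s-1}$ to $|\det g|^{1-s}$, and the Jacobians of the accompanying changes of variable supply the extra factor $|\det g|^{-1}$ needed to produce $|\det g|^{(1-s)-1}$; since $w$ does not exactly normalise the block-diagonal torus, what survives is not $\widetilde{\varphi}$ itself but a fixed translate $\varphi'=\rho(h)\widetilde{\varphi}$ of it. This is the computation in the proof of Proposition~1 of Section~9 of \cite{JaSh90a}, and it gives $I(s,\varphi)=I(1-s,\varphi')$.

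\emph{The main obstacle.} There is no serious analytic difficulty, since one works in a region of common absolute convergence and then continues meromorphically. The real work is the combinatorial bookkeeping: tracking how $g\mapsto{}^{\iota}g$ interacts with the block embedding $g\mapsto\mathrm{diag}(g,g)$ and with the permutation $\sigma$ defining the Jacquet--Shalika integral, pinning down the precise Weyl element and translate that restore the standard form, verifying that each substitution ($g\mapsto{}^{\iota}g$, $X\mapsto-{}^{t}X$, the $Y$-translation, and conjugation by $w_{n,n}$ or by $w$) is measure preserving and produces no Jacobian beyond the $|\det g|$ accounted for above, and matching the additive character and the central-character normalisation of the dual Eisenstein series (the one attached to $\widetilde{\pi}$) to the precise form in which the functional equations are stated.
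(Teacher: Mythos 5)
Your approach is essentially the one the paper relies on: the paper proves this theorem simply by citing the Eisenstein series functional equation (recorded just before the theorem) together with the computation on page~220 of Jacquet--Shalika for the odd case, and your sketch is an expansion of exactly that reference. The steps you outline in the even case — invoke $E(g,\Phi,s)=E({}^{\iota}g,\widehat\Phi,1-s)$, substitute $g\mapsto{}^{\iota}g$, rewrite the cusp-form factor via $\varphi(h)=\widetilde\varphi({}^{\iota}h)$, conjugate the lower-unipotent block back to the upper-unipotent form by $w_{n,n}$ (which centralises $\mathrm{diag}(g,g)$), absorb the left $w_{n,n}$ into automorphy, and finish with a change of variable in $X$ — are the correct mechanism.

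One point you should resolve more carefully: you note that the final change of variable $X\mapsto -{}^{t}X$ converts $\psi(\tr X)$ into $\overline\psi(\tr X)$, but you then assert this ``reassembles exactly'' the integral defining $I(1-s,\rho(w_{n,n})\widetilde\varphi,\widehat\Phi)$. As the paper has written \eqref{global_inteven}, that integral is defined with $\psi(\tr X)$, not $\overline\psi(\tr X)$, so either a further conjugation by an element of ${\rm GL}_{2n}(F)$ (e.g.\ $\mathrm{diag}(I_n,-I_n)$, which commutes with $\mathrm{diag}(g,g)$ and flips the sign of $X$) must be absorbed into automorphy, or the definition of the dual integral must be understood relative to $\overline\psi$. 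Likewise in the odd case, the extra factor of $|\det g|^{-1}$ you need cannot come merely from the $X$- and $Y$-changes of variable (those Jacobians are $1$ on the compact quotients); it arises from the specific Weyl-element manipulation in Jacquet--Shalika, which you correctly flag as the nontrivial bookkeeping. These are the two spots where the sketch is not yet a complete argument, but neither constitutes a wrong approach — both are supplied by the cited computation, which is all the paper itself does.
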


We now define global analogues of the local integrals appearing in Subsection
\ref{irmlocal}. Let
\begin{equation*}
 W_{\varphi}(g) = \int_{N_{r}(F) \backslash N_{r}} \varphi(ug)\psi(u)du 
\end{equation*}
be the Whittaker function associated to $\varphi$, where, as before,
 we view $\psi$ as a character of $N_{r}$ by setting
\begin{equation*}
\psi(u) = \displaystyle{\prod_{j=1}^{r-1}}\psi(u_{j,j+1}).
\end{equation*}
If $r = 2n$, consider
\begin{align*}
 J(s, W_{\varphi}, \Phi) = \int_{N\backslash G}\int_{V'\backslash M} & W_{\varphi}\left(\sigma\left(\begin{array}{cc} I_{n} & X \\ 0 & I_{n} \end{array}\right)
\left(\begin{array}{cc} g & 0 \\ 0 & g \end{array}\right) \right) \hspace{2cm}\\
&\psi(\tr X)dX \Phi(e_{n}g) |{\det g}|^{s} dg,
\end{align*}
where ${\rm V}'$ is the subspace of strictly upper triangular matrices in ${\rm M}$ and $ \sigma $ is the permutation given by
\begin{equation*}
 \sigma = \left( \begin{array}{ccccccccc} 
1 & 2 & \cdots & n & | & n+1 & n+2 & \cdots & 2n \\
1 & 3 & \cdots & 2n-1 & | & 2 & 4 & \cdots & 2n 
\end{array} \right).
\end{equation*}
If $r = 2n+1 $, consider 
\begin{align*} 
 J(s, W_{\varphi}) = 
\int_{N\backslash G}\int_{V'\backslash M} & W_{\varphi}\left(\sigma\left(\begin{array}{ccc} I_{n} & X & 0\\ 0 & I_{n} & 0\\ 0 & 0 & 1 \end{array}\right)
\left(\begin{array}{ccc} g & 0 & 0 \\ 0 & g & 0\\ 0 & 0 & 1 \end{array}\right) \right) \\
 &\psi(\tr X)dX |{\det g}|^{s-1} dg, 
\end{align*}
where  $ \sigma $ is the permutation given by
\begin{center}
$ \sigma = \left( \begin{array}{cccccccccc} 
1 & 2 & \cdots & n & | & n+1 & n+2 & \cdots & 2n & 2n+1 \\
1 & 3 & \cdots & 2n-1 & | & 2 & 4 & \cdots & 2n & 2n+1
\end{array} \right) $.
\end{center}
We can combine Proposition 5 of Section 6 and Proposition 2 of 
Section 9 of \cite{JaSh90a} to get the following proposition.
\begin{proposition} \label{globalint_eqlty} For ${\rm Re}(s)$ sufficiently large we 
obtain the following equalities.
\begin{enumerate}
\item The integral $J(s, W_{\varphi}, \Phi)$ converges absolutely and
\[
I(s, \varphi, \Phi) = J(s, W_{\varphi}, \Phi).
\]
\item The integral $J(s, W_{\varphi})$ converges absolutely and
\[
I(s, \varphi) = J(s, W_{\varphi}).
\]
\end{enumerate}
\end{proposition}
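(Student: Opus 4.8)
The plan is to establish the two equalities by the standard "unfolding" argument for Rankin--Selberg type integrals, treating the even case ($r = 2n$) in detail and indicating the modifications needed for the odd case ($r = 2n+1$). The first step is to restrict to $\re{s} > 1$, where the Eisenstein series $E(g,\Phi,s)$ converges absolutely (by the theorem quoted above) and where, as Jacquet and Shalika show, the double integral defining $I(s,\varphi,\Phi)$ converges absolutely as well; this absolute convergence is what will justify all the interchanges of summation and integration below. In this region I would substitute the defining sum for the Eisenstein series into \eqref{global_inteven} and collapse the quotient ${\rm G}(F)\backslash G$ against the sum over ${\rm P}_{n-1,n}(F)\backslash{\rm G}(F)$. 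Care is needed here because the outer integral in $I(s,\varphi,\Phi)$ is over ${\rm G}(F)\backslash G/{\rm Z}$ rather than ${\rm G}(F)\backslash G$; one exploits that $f(g,s)$ transforms under ${\rm Z}$ by the inverse of the central character of $\pi$ (via the $\omega_\pi(a)$ factor and the $|\det g|^s$ factor), so that the integrand $\varphi(\cdots)E(g,\Phi,s)$ descends to ${\rm G}(F)\backslash G/{\rm Z}$ and the unfolding against the Eisenstein sum reconstitutes an integral over ${\rm P}_{n-1,n}(F)\backslash G$ — modulo the center — with $f(g,s)$ in place of $E(g,\Phi,s)$.

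The second step is to unfold the inner $f(g,s)$ integral. Writing out $f(g,s) = |\det g|^s\int_{\af^\times}\Phi(e_n a g)|a|^{ns}\omega_\pi(a)\,da$ and combining the $\af^\times$ integration with the ${\rm P}_{n-1,n}(F)\backslash G$ integration, the mirabolic structure lets one merge $\af^\times$ and the Levi quotient so that the combined domain becomes ${\rm P}_n(F)\backslash G$, where ${\rm P}_n$ is the full mirabolic subgroup, and the integrand now contains $\Phi(e_n g)|\det g|^s$ with $e_n g$ ranging suitably; this is exactly the manipulation in Section 6 of \cite{JaSh90a}. Next I would perform the Fourier expansion of the cusp form $\varphi$ along the unipotent radical: the cuspidality of $\pi$ and the standard theory of the Whittaker expansion replace $\varphi$ on the mirabolic by a sum over ${\rm N}_r(F)$-orbits that telescopes, via the permutation $\sigma$ and the structure of $V\backslash M$, into a single term involving the global Whittaker function $W_\varphi(g) = \int_{{\rm N}_r(F)\backslash{\rm N}_r}\varphi(ug)\psi(u)\,du$. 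After this substitution one recognises precisely the integral $J(s,W_\varphi,\Phi)$; that its domain $N\backslash G$ and $V'\backslash M$ (with $V'$ the strictly upper triangular matrices) emerges correctly is the content of matching the residual quotients from the unfolding with the definition of $J$. Absolute convergence of $J(s,W_\varphi,\Phi)$ for $\re{s}$ large then follows a posteriori from Proposition \ref{js-int-conv} applied to a sufficiently dominant half-plane, or directly from the absolute convergence that propagated through the unfolding.

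For the odd case, the argument is the same in spirit but simpler: there is no Eisenstein series, $I(s,\varphi)$ already converges absolutely for all $s$ by Proposition 1 of Section 9 of \cite{JaSh90a}, and one only needs the Fourier/Whittaker expansion of $\varphi$ along ${\rm N}_{2n+1}(F)$ together with the extra $F^n\backslash\af^n$ integration over the block $Y$, which after expansion collapses so as to produce the global Whittaker function and the integral $J(s,W_\varphi)$ over $N\backslash G$ and $V'\backslash M$; this is Proposition 2 of Section 9 of \cite{JaSh90a}. The main obstacle, as usual in these computations, is the bookkeeping in the Fourier expansion step: one must verify that the successive integrations over the pieces of the unipotent radical, interleaved with the conjugation by $\sigma$, pick out exactly the generic character $\psi$ and leave exactly the quotient $V'\backslash M$ (strictly versus merely upper triangular matrices is a genuine subtlety, coming from the diagonal torus being absorbed into $N\backslash G$), rather than some larger or smaller orbit; all of the analytic content — convergence, legitimacy of interchanging sum and integral — is already guaranteed by the cited convergence statements once we stay in the appropriate right half-plane.
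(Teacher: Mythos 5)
Your sketch is a correct (if necessarily condensed) reconstruction of the unfolding argument; but the paper itself does not prove this proposition, it simply cites Proposition 5 of Section 6 and Proposition 2 of Section 9 of \cite{JaSh90a}, which are exactly the places you point to at the end of your write-up. So your approach is the paper's approach, with the intermediate steps of Jacquet and Shalika's unfolding (Eisenstein series against the ${\rm P}_{n-1,n}(F)\backslash {\rm G}(F)$ sum, absorption of $\af^\times$ into the mirabolic, Fourier--Whittaker expansion of $\varphi$ using cuspidality, and the $M(F)\backslash M \rightsquigarrow V'\backslash M$ bookkeeping) spelled out rather than left to the reference.
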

The global integrals are easily related to the local integrals for decomposable vectors. 
If $W_{\varphi} = \displaystyle{\prod_{v}} W_{v}$ and $\Phi = \displaystyle{\prod_{v}}\Phi_{v} $, 
where $v$ runs over all the absolute values of $F$, then for ${\rm Re}(s)$  sufficiently large,
we have
\begin{equation*}
 J(s, W_{\varphi}, \Phi) = \displaystyle{\prod_{v}}J(s, W_{v}, \Phi_{v}) ~\text{and}~ J(s, W_{\varphi}) = \displaystyle{\prod_{v}}J(s, W_{v}).
\end{equation*}

\section{The Langlands-Shahidi method}\label{lsm}
We briefly recall the results from the  Langlands-Shahidi method for the 
exterior square $L$-function.

\subsection{The Local theory}\label{lsmlocal}
Let $ L_{Sh}(s, \pi_v, \wedge^{2}) $ be the exterior square $L$-function defined by 
Langlands-Shahidi method. For a tempered representation $\pi_v$ of $\glrfv$ over 
a $p$-adic field $F_v$, the $L$-function 
$L_{Sh}(s, \pi_v, \wedge^{2}) $ is defined as the inverse of a certain unique polynomial 
$P(q_v^{-s})$ in $q_v^{-s}$ satisfying $P(0) = 1$, and such that $P(q_v^{-s})$ is the 
numerator of a certain gamma factor 
$\gamma(s, \pi_v, r_{i}, \psi_v)$ defined in \cite{Shahidi90}. We refer to \cite{Shahidi90} for the precise definition of 
$L_{Sh}(s, \pi_v, \wedge^{2}) $. 
\begin{proposition} {\rm \cite[Proposition 7.2]{Shahidi90}} \label{temp-Sh}
If $\pi_v$ is a tempered representation of $\glrfv$, where $F_v$ is a $p$-adic field, the local $L$-function $ L_{Sh}(s, \pi_v, \wedge^{2}) $ is holomorphic in the region $\text{Re}(s) > 0 $.
\end{proposition}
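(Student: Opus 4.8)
The plan is to reduce the general tempered case to the supercuspidal one by means of the multiplicativity of Shahidi's local coefficient, and then to settle the supercuspidal case using Harish-Chandra's theory of the Plancherel measure.

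Recall first the shape of the construction. The exterior square $L$-function is the one attached by the Langlands--Shahidi method to $\glr$ realized as the Levi factor $M$ of the Siegel maximal parabolic of the split even orthogonal group $\mathrm{SO}_{2r}$; the adjoint action of $M$ on the Lie algebra of the unipotent radical is $\wedge^2$ of the standard representation, which is irreducible, so that there is a single gamma factor $\gamma(s,\pi_v,\wedge^2,\psi_v)$, defined through Shahidi's local coefficient, and $\lshepiv=P(q_v^{-s})^{-1}$ with $P$ the numerator of $\gamma(s,\pi_v,\wedge^2,\psi_v)$, normalised by $P(0)=1$. Thus the claim is equivalent to saying that $P(q_v^{-s})$ has no zero with $\re{s}>0$, i.e. that $\gamma(s,\pi_v,\wedge^2,\psi_v)$ has no zero in $\re{s}>0$.

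Any tempered representation of $\glrfv$ is irreducibly induced from a discrete series $\delta_1\otimes\cdots\otimes\delta_t$ of a standard Levi $\prod_j\mathrm{GL}_{r_j}(F_v)$ of $\glr$. Inducing in stages and applying the multiplicativity of the local coefficient (an inductive property established in \cite{Shahidi90}) gives
\[
\lshepiv=\prod_{j}L_{Sh}(s,\delta_j,\wedge^2)\cdot\prod_{i<j}L(s,\delta_i\times\delta_j),
\]
the last factors being the local Rankin--Selberg $L$-functions of pairs, which for square integrable $\delta_i,\delta_j$ are holomorphic in $\re{s}>0$ (a pole would force $\delta_i\cong\tilde\delta_j\otimes|\det|^{-s}$, hence $\re{s}=0$). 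So one may take $\pi_v=\delta=\mathrm{St}(\rho,a)$, the unique essentially square integrable constituent of $\rho|\det|^{(a-1)/2}\times\cdots\times\rho|\det|^{-(a-1)/2}$, with $\rho$ a unitary supercuspidal representation of some $\mathrm{GL}_m(F_v)$ and $r=am$. A further application of multiplicativity -- or, equivalently, the Weil--Deligne description of $\mathrm{St}(\rho,a)$ together with the identity $\wedge^2(A\otimes B)=\wedge^2A\otimes\mathrm{Sym}^2B\oplus\mathrm{Sym}^2A\otimes\wedge^2B$ -- writes $\lshepiv$, for $\pi_v=\delta$, as a product of factors $L_{Sh}(s+c,\rho,\wedge^2)$ and $L_{Sh}(s+c,\rho,\mathrm{Sym}^2)$ over non-negative integers $c$, the value $c=0$ occurring at most once. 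The terms with $c\ge 1$ are holomorphic in $\re{s}>0$ as soon as the $c=0$ term is, and the exterior square is interchangeable with the symmetric square here (replace $\mathrm{SO}_{2m}$ by $\mathrm{Sp}_{2m}$). Hence the whole matter comes down to proving: if $\rho$ is a unitary supercuspidal representation of $\mathrm{GL}_m(F_v)$, then $L_{Sh}(s,\rho,\wedge^2)$ is holomorphic in $\re{s}>0$.

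This supercuspidal case is the main obstacle, and I would treat it through Harish-Chandra's Plancherel measure. The $L$-factor $L_{Sh}(s,\rho,\wedge^2)$ is, by design, the normalising factor making the Langlands--Shahidi normalised intertwining operator
\[
N(s,\rho)=(\text{unit})\cdot L_{Sh}(s,\rho,\wedge^2)^{-1}\cdot A(s,\rho,w_0)
\]
holomorphic and non-vanishing for $\re{s}>0$ when $\rho$ is tempered; this normalisation property is itself proved from Shahidi's identification of the Plancherel measure, $\mu(s,\rho)=\gamma(s,\rho,\wedge^2,\psi_v)\,\gamma(-s,\tilde\rho,\wedge^2,\bar\psi_v)$, together with the classification of the reducibility points of the degenerate principal series $I(s,\rho)$ of $\mathrm{SO}_{2m}$ induced from a supercuspidal of the Siegel Levi. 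Granting it, the conclusion is quick: the unnormalised operator $A(s,\rho,w_0)$ is holomorphic for $\re{s}>0$ (convergence of the intertwining integral in the positive cone plus Harish-Chandra's meromorphic continuation), so $L_{Sh}(s,\rho,\wedge^2)=(\text{unit})\cdot A(s,\rho,w_0)\cdot N(s,\rho)^{-1}$ is holomorphic there, which is to say that the zeros of $P(q_v^{-s})$ all have $|q_v^{-s}|=1$, i.e. lie on $\re{s}=0$. Running the reductions backwards yields the Proposition. The delicate point throughout is the normalisation property just invoked -- equivalently, understanding precisely where the Plancherel measure of a Siegel-Levi supercuspidal has its poles and zeros, or what amounts to the same thing, the reducibility points of the relevant degenerate principal series of classical groups -- which is the technical heart of Shahidi's argument.
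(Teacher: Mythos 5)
The paper does not prove this statement: it is quoted verbatim as Shahidi's Proposition 7.2 from \cite{Shahidi90}, with no argument supplied. So there is no ``paper's own proof'' to compare against; what you have done is reconstruct Shahidi's argument.

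Your reconstruction is essentially faithful to how the result is actually proved. The identification of $\wedge^2$ as the adjoint representation on the nilradical of the Siegel parabolic of $\mathrm{SO}_{2r}$, the reduction from tempered to discrete series via induction in stages and multiplicativity of the local coefficient (with the Rankin--Selberg cross-terms handled by the fact that poles of $L(s,\delta_i\times\delta_j)$ for unitary discrete series $\delta_i,\delta_j$ sit on $\mathrm{Re}(s)=0$), the further reduction from generalized Steinberg $\mathrm{St}(\rho,a)$ to supercuspidal $\rho$ via the branching $\wedge^2(A\otimes B)\cong(\wedge^2A\otimes\mathrm{Sym}^2B)\oplus(\mathrm{Sym}^2A\otimes\wedge^2B)$, and the final supercuspidal step via the Plancherel measure and the holomorphy/nonvanishing of the normalized intertwining operator for $\mathrm{Re}(s)>0$ -- all of this is the right skeleton. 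Two small caveats worth flagging: (i) multiplicativity is established at the level of the local coefficient (hence $\gamma$-factors), and passing to the stated product of $L$-functions for tempered data requires the small additional observation that the numerators multiply without cancellation, which in Shahidi's development is folded into the definition of $L$ in the tempered case; (ii) the phrase ``the terms with $c\ge1$ are holomorphic in $\mathrm{Re}(s)>0$ as soon as the $c=0$ term is'' should simply read that the shifted terms are automatically holomorphic in $\mathrm{Re}(s)>0$ once the unshifted supercuspidal $L$-factor is, since a shift by $c>0$ only enlarges the half-plane of holomorphy. Neither affects the substance.
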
 
If $F_v$ is an archimedean local field the $L$-functions are known to have the following form.
\begin{proposition} \label{arch-Sh}
The $L$-function $ L_{Sh}(s, \pi_v, \wedge^{2}) $ is a product of Gamma functions of the form 
$c\pi^{-s/2}\Gamma(\frac{s + b}{2}) $ for constants $c\ne 0$ and $b$ in $\C$.
\end{proposition}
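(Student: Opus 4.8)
The plan is to reduce the assertion to the familiar shape of archimedean Artin $L$-factors. The key input will be Shahidi's theorem that at an archimedean place the Langlands-Shahidi $L$-function coincides with the Artin $L$-function attached, through the archimedean local Langlands correspondence, to the relevant representation of the Weil group; that is, $\lshepiv = L(s,\wedge^{2}\rho_{F_v}(\pi_v))$, where $\rho_{F_v}(\pi_v)$ is the Langlands parameter of $\pi_v$ (this compatibility is built into the definition of the archimedean factors in \cite{Shahidi90}). Granting this, it is enough to show that for an arbitrary finite-dimensional semisimple representation $\rho$ of the Weil group $W_{F_v}$ of an archimedean field $F_v$, the Artin $L$-factor $L(s,\rho)$ is a finite product of factors of the form $c\,\pi^{-s/2}\Gamma\!\left(\tfrac{s+b}{2}\right)$ with $c\neq 0$ and $b\in\C$, and then apply this with $\rho = \wedge^{2}\rho_{F_v}(\pi_v)$.

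To establish the auxiliary claim I would first decompose $\rho$ into irreducible constituents; since $L(s,\cdot)$ sends direct sums to products, it suffices to treat an irreducible $\rho$. If $F_v = \C$, then $W_{F_v}\cong\C^{\times}$, every irreducible $\rho$ is a quasi-character, and $L(s,\rho) = \Gamma_{\C}(s+w)$ for a suitable $w\in\C$, where $\Gamma_{\C}(u) = 2(2\pi)^{-u}\Gamma(u)$. If $F_v = \R$, then the irreducible representations of $W_{\R}$ are either one-dimensional, with $L$-factor $\Gamma_{\R}(s+w)$ for a suitable $w\in\C$, where $\Gamma_{\R}(u) = \pi^{-u/2}\Gamma(u/2)$, or two-dimensional and induced from a quasi-character of $W_{\C}$, with $L$-factor of the form $\Gamma_{\C}(s+w)$. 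Hence in every case $L(s,\rho)$ is a product of finitely many translates of $\Gamma_{\R}$ and $\Gamma_{\C}$.

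It then remains to recognise each of these in the desired normal form. A translate of $\Gamma_{\R}$ already has it, since $\Gamma_{\R}(s+b) = \pi^{-b/2}\cdot\pi^{-s/2}\Gamma\!\left(\tfrac{s+b}{2}\right)$, so one takes $c = \pi^{-b/2}$; and a translate of $\Gamma_{\C}$ can be rewritten, via the Legendre duplication formula in the form $\Gamma_{\C}(u) = \Gamma_{\R}(u)\,\Gamma_{\R}(u+1)$, as a product of two translates of $\Gamma_{\R}$, to which the previous observation applies. Multiplying the finitely many factors so obtained and collecting the nonzero constants into a single $c$ yields the proposition. I expect the only non-formal ingredient to be the first step, namely the identification of the archimedean Langlands-Shahidi factor with the Artin factor of the exterior square of the parameter; everything after that is the routine bookkeeping of $\Gamma$-functions sketched above.
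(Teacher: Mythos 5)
The paper states Proposition \ref{arch-Sh} without proof, prefacing it only with the remark that ``the $L$-functions are known to have the following form,'' so there is no paper proof to compare against. Your reconstruction is correct and is the standard argument: identify $\lshepiv$ with the Artin factor $L(s,\wedge^2\rho_{F_v}(\pi_v))$ via Shahidi's archimedean compatibility (which the paper itself invokes in Theorem \ref{henniart}, citing \cite{Shahidi85}), decompose $\wedge^2\rho_{F_v}(\pi_v)$ into irreducibles of $W_{F_v}$, observe that each irreducible contributes either a shifted $\Gamma_{\R}$ or a shifted $\Gamma_{\C}$, and use $\Gamma_{\C}(u)=\Gamma_{\R}(u)\Gamma_{\R}(u+1)$ (Legendre duplication) to rewrite everything as a product of factors $\pi^{-(s+b)/2}\Gamma\!\left(\tfrac{s+b}{2}\right)=\pi^{-b/2}\cdot\pi^{-s/2}\Gamma\!\left(\tfrac{s+b}{2}\right)$, absorbing the constants into $c$. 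The bookkeeping is right (one checks $\Gamma_{\R}(u)\Gamma_{\R}(u+1)=\pi^{-u-1/2}\cdot 2^{1-u}\sqrt{\pi}\,\Gamma(u)=2(2\pi)^{-u}\Gamma(u)=\Gamma_{\C}(u)$), and the only substantive input is exactly the one you isolate, namely the archimedean Langlands--Shahidi/Artin comparison, which the paper already assumes. So your proposal supplies a correct proof of a statement the paper took as known.
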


\subsection{The Global Theory}\label{lsmglobal}
Let $\pi = \otimes'_{v}\pi_{v}$ be a unitary cuspidal representation of $GL_r$. We define the completed (global) $L$-function as
\begin{equation*}
L_{Sh}(s, \pi, \wedge^{2}) = \displaystyle{\prod_{v}}L_{Sh}(s, \pi_{v}, \wedge^{2}),
\end{equation*} 
where $v$ runs over all the absolute values of $F$. Combining Propositions 3.1 and 3.4 of \cite{Kim99} we obtain:
\begin{proposition}
The $L$-function $L_{Sh}(s, \pi, \wedge^{2})$ is holomorphic for $\text{Re}(s) > 1$.
\end{proposition}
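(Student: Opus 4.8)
The assertion is, as stated, the content of Kim's Propositions 3.1 and 3.4; what I would do is to unwind the Eisenstein-series mechanism underlying them. Realize $\glr$ (with $r=2n$ or $2n+1$, the cases of interest here) as the Siegel Levi subgroup $M$ of the split even orthogonal group $\mathrm{SO}_{2r}$: the unipotent radical of the Siegel parabolic is abelian and, as a module for ${}^{L}M$, is the exterior square of the standard representation, so this is precisely the datum to which the Langlands--Shahidi machinery attaches $\lshepi$. From the unitary cuspidal $\pi=\otimes_v'\pi_v$ of $\glr(\af)$ and a cusp form $\varphi$ in its space, build the Eisenstein series $E(s,\varphi,\cdot)$ on $\mathrm{SO}_{2r}(\af)$, normalized so that the inducing data sits at $|\det|^{s}$ on $M$.

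The first step is to invoke Langlands' general theory: since $\pi$ is unitary, the cuspidal exponent lies on the imaginary axis, so $E(s,\varphi,\cdot)$ is absolutely convergent and holomorphic for $\re{s}>1$, and hence so is its constant term along the Siegel parabolic. The second step is Shahidi's computation of that constant term: on a factorizable section it is given by the global intertwining operator, whose action at each unramified place is multiplication by $\lshepiv/L_{Sh}(1+s,\pi_v,\wedge^2)$ (Gindikin--Karpelevich), so that collecting the unramified places produces the factor $\lshepi/L_{Sh}(1+s,\pi,\wedge^2)$ times a finite product over the ramified places of normalized local intertwining operators, which are holomorphic and non-vanishing for $\re{s}>0$ by Shahidi's local results (Proposition \ref{temp-Sh} in the tempered case, together with its consequences for general $\pi_v$ and the local factors built from the gamma factors of \cite{Shahidi90}). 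It follows that the ratio $\lshepi/L_{Sh}(1+s,\pi,\wedge^2)$ is holomorphic for $\re{s}>1$. Finally, for $\re{s}>1$ the shifted $L$-function $L_{Sh}(1+s,\pi,\wedge^2)$ lies in the region of absolute convergence of its Euler product --- the abscissa of absolute convergence of the exterior square being at most $1$ in the argument, by the Jacquet--Shalika bound on the Satake parameters of a unitary cuspidal representation (equivalently, comparison with the Rankin--Selberg $L$-function $L(w,\pi\times\pi)$, a factor of which it is) --- so $L_{Sh}(1+s,\pi,\wedge^2)$ is holomorphic and non-zero there; multiplying back recovers the holomorphy of $\lshepi$ in $\re{s}>1$.

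The main obstacle is the bootstrap step, which is less formal than it looks: one must know that the half-plane $\re{s}>1$ genuinely lies inside the range where the Eisenstein series (equivalently its constant term) is holomorphic --- this is where unitarity of $\pi$ is used to place the cuspidal exponent correctly --- and one must know that $L_{Sh}(1+s,\pi,\wedge^2)$ is already holomorphic and non-vanishing for $\re{s}>1$, which is a genuine statement about the abscissa of absolute convergence of the exterior square Euler product and rests on the Jacquet--Shalika bounds toward Ramanujan (or on the analytic properties of the Rankin--Selberg convolution of $\pi$ with itself). The ramified local factors also need some care, since they are defined through the Langlands--Shahidi gamma factors rather than through Satake data; but Shahidi's local holomorphy and non-vanishing statements are exactly what is needed to control them, and this is precisely what Kim's Propositions 3.1 and 3.4 assemble.
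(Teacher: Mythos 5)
Your reconstruction is correct and is the argument underlying Kim's Propositions 3.1 and 3.4, which the paper simply cites rather than reproves: holomorphy of the Eisenstein series for $\mathrm{Re}(s)>1$ (unitarity of $\pi$), the constant-term/Gindikin--Karpelevich computation producing $L_{Sh}(s,\pi,\wedge^2)/L_{Sh}(1+s,\pi,\wedge^2)$ up to ramified normalized intertwining operators, and then the bootstrap via absolute convergence of the shifted $L$-function. One small inaccuracy worth noting: the Jacquet--Shalika bound $|\alpha_{i,v}|<q_v^{1/2}$ yields an abscissa of absolute convergence for the exterior-square Euler product of at most $2$, not $1$ (the latter would essentially require Ramanujan, and the Rankin--Selberg comparison you sketch does not give it either), but this does not affect the argument, since the bootstrap only needs $L_{Sh}(1+s,\pi,\wedge^2)$ to converge absolutely --- hence be holomorphic and non-vanishing --- for $\mathrm{Re}(s)>1$.
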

Combining Theorem 3.5 and Proposition 3.6 of \cite{Kim99}) gives 
\begin{theorem}\label{kimthm}
If $n$ is even and $\pi$ is non self dual, or if $n$ is odd, the $L$-function $L_{Sh}(s, \pi, \wedge^{2})$ is entire.
\end{theorem}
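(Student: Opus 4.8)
I would obtain $\lshepi$ from the Langlands--Shahidi machinery attached to a single maximal parabolic subgroup and then locate its poles through the associated Eisenstein series. The exterior square of $\glr$ (with $r=n$) occurs as the adjoint action of the Siegel Levi subgroup $\glr$ of $\mathrm{SO}_{2r}$ on its abelian unipotent radical, so that there is exactly one $L$-function ($r_1=\wedge^2$) in Shahidi's list. Inducing the cuspidal representation $\pi\otimes|\det|^{s}$ from this Levi produces an Eisenstein series $E(s,\varphi)$ whose constant term along the Siegel parabolic is governed by the global intertwining operator, and Shahidi's theory writes the automorphic part of that operator as the ratio of $\lshepi$ and the same $L$-function shifted by one, times $\epsilon$-factors and normalised local operators. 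From this one extracts the clean functional equation $\lshepi=\epsilon(s,\pi,\wedge^2)\,L_{Sh}(1-s,\tilde\pi,\wedge^2)$, with $\epsilon$ entire and nowhere vanishing, together with a dictionary relating poles of $E(s,\varphi)$ to poles of $\lshepi$.

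First I would settle holomorphy outside the critical strip. We already know that $\lshepi$ is holomorphic for $\re{s}>1$ (Propositions 3.1 and 3.4 of \cite{Kim99}, recorded above); the functional equation then gives holomorphy for $\re{s}<0$ as well, since $L_{Sh}(1-s,\tilde\pi,\wedge^2)$ is holomorphic there and $\epsilon$ has no zeros. Next, on the strip $0\le\re{s}\le 1$, I would invoke Langlands' theory of Eisenstein series: poles of $E(s,\varphi)$ in the open positive chamber are simple, occur at only finitely many real points, and arise only where the constant-term intertwining operator is singular and the residue is square integrable. For the $\glr$-Siegel-Levi situation in $\mathrm{SO}_{2r}$ the sole such point corresponds, in the normalisation of the $L$-function, to $s=1$; following the ratio of $L$-functions through the constant term then shows that the only possible pole of $\lshepi$ in $\re{s}\ge 0$ is at $s=1$, hence, by the functional equation, at $s=0$.

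It remains to rule out a pole at $s=1$ under the hypotheses of the theorem. A pole there is equivalent to $E(s,\varphi)$ having a residual pole, which forces $\pi$ to be self dual of \emph{symplectic} type; this follows from the factorisation $L(s,\pi\times\pi)=\lgepi\cdot L(s,\pi,\mathrm{Sym}^2)$, the Jacquet--Shalika fact that $L(s,\pi\times\pi)$ has a simple pole at $s=1$ exactly when $\pi\cong\tilde\pi$, and the dichotomy (with the symmetric square handled by Bump--Ginzburg) that attributes this pole to the exterior square only in the symplectic case, which needs $n$ even. Consequently, if $n$ is odd there is no cuspidal representation of $\glr$ that is self dual of symplectic type, while if $n$ is even but $\pi\not\cong\tilde\pi$ then $L(s,\pi\times\pi)$ is already holomorphic at $s=1$, so neither $\lgepi$ nor the symmetric square factor can have a pole there. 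In either case $\lshepi$ is holomorphic at $s=1$, hence throughout $\re{s}\ge 0$, and hence --- by the functional equation --- everywhere, so it is entire.

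The step I expect to be the main obstacle is the passage through the critical strip: showing that no pole lurks in $0<\re{s}<1$. This is precisely where the full analytic theory of Eisenstein series is required --- Langlands' constraint that poles in the positive chamber lie on a prescribed affine subspace, an induction on the semisimple rank of the ambient group, and the holomorphy and non-vanishing of $\lshepi$ on the line $\re{s}=1$ due to Shahidi. Once the pole structure has been reduced to the single point $s=1$, the remaining work --- the relation to $L(s,\pi\times\pi)$ and the parity obstruction to symplectic self-duality --- is comparatively routine.
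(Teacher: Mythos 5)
The paper does not actually prove this theorem: it is stated as a direct consequence of Theorem 3.5 and Proposition 3.6 of Kim \cite{Kim99}, and the ``proof'' in the present paper is simply the citation. Your sketch is in the spirit of Kim's own argument --- realising $\lshepi$ through the constant term of an Eisenstein series on the split $\mathrm{SO}_{2n}$ with Siegel Levi $\mathrm{GL}_n$, pushing holomorphy from $\re{s}>1$ and (via the functional equation) $\re{s}<0$ into the critical strip by Langlands' theory, and isolating $s=1$ as the only candidate pole. That part of the outline is the right framework, and identifying $\wedge^2$ as the unique $L$-function in Shahidi's list for this parabolic is correct.

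There are two gaps. The first you flag yourself: showing there is no pole in $0<\re{s}<1$ is precisely the technical heart of Kim's Theorem 3.5, and cannot be discharged by a generic appeal to ``Langlands' constraint that poles lie on a prescribed affine subspace''; making that precise here uses the holomorphy and non-vanishing results of the Langlands--Shahidi theory for this particular case. The second gap is more serious and is logically circular as written. For $n$ even and $\pi\not\cong\tilde\pi$ you are fine: the factorisation $L(s,\pi\times\pi)=\lgepi\cdot L(s,\pi,\mathrm{Sym}^2)$ plus the Jacquet--Shalika pole criterion shows a pole of $\lgepi$ at $s=1$ would force $\pi\cong\tilde\pi$. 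But for $n$ odd you invoke ``the dichotomy that attributes this pole to the exterior square only in the symplectic case, which needs $n$ even'' --- this is essentially the statement you want to prove, not an input. The factorisation and Bump--Ginzburg give a dichotomy between the two factors for self-dual $\pi$, but they do not by themselves tell you which factor carries the pole, and the assertion that a pole of $\wedge^2$ forces a symplectic parameter is a deep classification result (not available unconditionally when Kim wrote). Kim rules out the pole for $n$ odd by analysing the residual spectrum and the Shalika period structure directly, which is additional work beyond what you have outlined.
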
 

A theorem of Shahidi in \cite{Shahidi90} shows that the $L$-function $L_{Sh}(s, \pi, \wedge^{2})$ satisfies 
a functional equation.
\begin{theorem}{\rm \cite[Theorem 7.7]{Shahidi90}} \label{global_funteqsh}
The $L$-function $L_{Sh}(s, \pi, \wedge^{2})$ admits a meromorphic continuation to the entire complex plane 
and satisfies a functional equation
\begin{equation}\label{lshfnaleqn}
L_{Sh}(s, \pi, \wedge^{2}) = \epsilon_{Sh}(s, \pi, \wedge^{2}) L_{Sh}(1-s, \tilde{\pi}, \wedge^{2}),
\end{equation} 
where the function $\epsilon_{Sh}(s, \pi, \wedge^{2})$ is entire and non-vanishing, and $\tilde{\pi}$ denotes the representation contragredient to $\pi$.
\end{theorem}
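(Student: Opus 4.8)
The plan is to deduce the functional equation from the theory of Eisenstein series, following the Langlands--Shahidi method, in which the exterior square of $\glr$ occurs as the adjoint action on a unipotent radical. First I would realize $\glr$ as the Siegel Levi factor $M$ of the split group $G=\mathrm{SO}_{2r}$, with $P=MN$ the associated maximal parabolic; then the adjoint action of $M\cong\glr$ on $\mathrm{Lie}(N)$ is irreducible and isomorphic to $\wedge^{2}$ of the standard representation (the single ``piece'' $r_{1}$ in Shahidi's notation). Given the cuspidal $\pi=\otimes'_{v}\pi_{v}$ of $\glr(\af)$, one forms the family of induced representations $I(s,\pi)=\mathrm{Ind}_{P(\af)}^{G(\af)}\big(\pi\otimes|\det|^{s}\big)$ and the attached Eisenstein series $E(s,\varphi,g)$, $\varphi\in I(s,\pi)$, normalized so that the reflection in $W(M,G)$ corresponds to $s\mapsto 1-s$.

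The core of the argument is the analytic theory of $E(s,\varphi,g)$. By Langlands' theory of Eisenstein series (meromorphic continuation, Maass--Selberg relations) the series continues meromorphically in $s$ and satisfies a functional equation $E(s,\varphi)=E(1-s,M(s)\varphi)$, where $M(s)\colon I(s,\pi)\to I(1-s,\tilde{\pi})$ is the global standard intertwining operator and $M(1-s)M(s)=\mathrm{id}$. For decomposable $\varphi=\otimes_{v}\varphi_{v}$ one has $M(s)=\otimes_{v}A_{v}(s)$, and at each place $v$ outside a finite set $S$ (containing the archimedean places and all ramified data) the Gindikin--Karpelevich formula shows that $A_{v}(s)$ multiplies the normalized spherical vector by $L(s,\pi_{v},\wedge^{2})/L(s+1,\pi_{v},\wedge^{2})$. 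I would then extract the crude functional equation by passing to the $\psi$-th Fourier coefficient of $E(s,\varphi,\cdot)$ --- which is not identically zero because a cuspidal representation of $\glr$ is globally generic --- unfolding it to a product of local Whittaker integrals, and transporting the functional equation of $E$ through this factorization. The local coefficients $C_{\psi_{v}}(s,\pi_{v},w_{0})$ occurring at the places of $S$ are, by definition, the $\gamma$-factors $\gamma(s,\pi_{v},\wedge^{2},\psi_{v})$ of \cite{Shahidi90}, while outside $S$ they reduce via Gindikin--Karpelevich to ratios of unramified $L$-factors, and the outcome is the crude functional equation
\[
L^{S}(s,\pi,\wedge^{2})=\Big(\prod_{v\in S}\gamma(s,\pi_{v},\wedge^{2},\psi_{v})\Big)\,L^{S}(1-s,\tilde{\pi},\wedge^{2}),
\]
where $L^{S}$ denotes the partial $L$-function.

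It remains to reinsert the local factors at the places of $S$. For a tempered $\pi_{v}$ one defines $\lshepiv$ as the inverse of the numerator polynomial of $\gamma(s,\pi_{v},\wedge^{2},\psi_{v})$, together with the local $\epsilon$-factor $\epsilon(s,\pi_{v},\wedge^{2},\psi_{v})$ determined by $\gamma(s,\pi_{v},\wedge^{2},\psi_{v})=\epsilon(s,\pi_{v},\wedge^{2},\psi_{v})\,L(1-s,\tilde{\pi}_{v},\wedge^{2})/\lshepiv$ (see \cite{Shahidi90}), and one extends to arbitrary $\pi_{v}$ by multiplicativity along the Langlands classification, using the archimedean shape of Proposition \ref{arch-Sh} at the infinite places. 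Multiplying the crude functional equation by $\prod_{v\in S}\lshepiv$ and substituting, and noting that for $v\notin S$ the factor $\lshepiv$ is exactly the unramified Euler factor while $\epsilon(s,\pi_{v},\wedge^{2},\psi_{v})=1$, one arrives at
\[
\lshepi=\Big(\prod_{v}\epsilon(s,\pi_{v},\wedge^{2},\psi_{v})\Big)\,L_{Sh}(1-s,\tilde{\pi},\wedge^{2}),
\]
which is \eqref{lshfnaleqn} with $\epsilon_{Sh}(s,\pi,\wedge^{2})=\prod_{v}\epsilon(s,\pi_{v},\wedge^{2},\psi_{v})$. This last product is finite, is independent of $\psi$ by the product formula (the global character $\psi$ being trivial on $F$), and each local factor is a nonzero monomial in $q_{v}^{-s}$ at a finite place and a nonzero exponential at an archimedean place; hence $\epsilon_{Sh}(s,\pi,\wedge^{2})$ is entire and nowhere vanishing. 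Meromorphic continuation of $\lshepi$ is then automatic: $L^{S}(s,\pi,\wedge^{2})$ continues meromorphically together with $M(s)$, and the finitely many remaining factors $\lshepiv$ are rational in $q_{v}^{-s}$ or products of Gamma functions.

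The main obstacle is everything concentrated in the second paragraph: the meromorphic continuation and functional equation of the Eisenstein series in this rank-one situation, together with the local-coefficient formalism underlying the crude functional equation --- in particular the rationality in $q_{v}^{-s}$ of the $p$-adic local coefficients, their identification with the $\gamma$-factors, and their multiplicativity, which is precisely what legitimizes separating off the finite bad set $S$ and, with it, the definition of the local $L$- and $\epsilon$-factors. These are the substantive contents of \cite{Shahidi90}; granting them, the assembly carried out in the third paragraph is purely formal.
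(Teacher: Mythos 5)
The paper does not prove this theorem at all: it is imported verbatim as \cite[Theorem 7.7]{Shahidi90}, so there is no internal proof to compare against. Your sketch is a correct and faithful outline of the Langlands--Shahidi argument behind that citation --- the Siegel Levi $\mathrm{GL}_r\subset\mathrm{SO}_{2r}$ with $\wedge^2$ as the adjoint action on $\mathrm{Lie}(N)$, Langlands' meromorphic continuation and functional equation for the associated Eisenstein series, Gindikin--Karpelevich at unramified places, the local-coefficient/crude-functional-equation formalism, the definition of the local $L$- and $\epsilon$-factors from $\gamma$-factors (via temperedness plus multiplicativity along the Langlands classification), and the resulting finite nonzero product for $\epsilon_{Sh}$ --- so the substance is right and matches the source the paper relies on.
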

When $v$ is a finite place of $F$, Henniart has shown that the Langlands-Shahidi and Galois $L$-functions 
are equal (this was already known for the finite unramified places by \cite{Shahidi81}). Combining this with 
a result of Shahidi for the archimedean places in \cite{Shahidi85}, we can state the following theorem.
\begin{theorem}\label{henniart}
 Let $\pi_v$ be a smooth irreducible representation of $\glrfv$. Then
\[ L_{Sh}(s, \pi_v, \wedge^{2}) = L(s, \pi_v, \wedge^{2}).
\]
\end{theorem}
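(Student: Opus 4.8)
The plan is to treat the archimedean and non-archimedean places separately, and in each case to compare the two constructions through their $\gamma$- and $\epsilon$-factors rather than through the $L$-functions directly. For $F_v$ archimedean the local Langlands correspondence for $\glrfv$ is completely explicit (Langlands' classification), so $\rho_{F_v}(\pi_v)$, and hence $\wedge^2\rho_{F_v}(\pi_v)$, is understood. Shahidi's archimedean computation in \cite{Shahidi85} identifies the Langlands--Shahidi $\gamma$- and $\epsilon$-factors of $\pi_v$ (for the relevant $r_i$, here $\wedge^2$) with the Artin factors of $r_i\circ\rho_{F_v}(\pi_v)$; taking $r_i=\wedge^2$ and using that $\wedge^2$ commutes both with the correspondence and with the formation of Artin $L$- and $\epsilon$-factors gives equality of the $\gamma$-factors, and then Proposition \ref{arch-Sh} (the archimedean $L$-factor is a single $\Gamma$-factor, hence rigidly determined by the $\gamma$-factor) upgrades this to $\lshepiv=\lgepiv$ for $v\mid\infty$.

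For $F_v$ $p$-adic I would follow Henniart's argument in \cite{Henniart10}. The first step is a reduction to supercuspidal $\pi_v$: write a general irreducible $\pi_v$ as the Langlands quotient of a representation parabolically induced from twists of discrete series, and each discrete series as a generalized Steinberg $\mathrm{St}_m(\rho)$ with $\rho$ supercuspidal and Langlands parameter $\rho_{F_v}(\rho)\otimes\mathrm{Sp}(m)$. Both constructions are multiplicative — on the Langlands--Shahidi side this is Shahidi's multiplicativity of $\gamma$-factors, on the Galois side the decomposition $\wedge^2(\tau_1\oplus\tau_2)=\wedge^2\tau_1\oplus\wedge^2\tau_2\oplus(\tau_1\otimes\tau_2)$ together with the already known equality of Rankin--Selberg local factors with their Galois counterparts — so the statement for general $\pi_v$ follows from the supercuspidal case. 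Because $\wedge^2(\rho_{F_v}(\rho)\otimes\mathrm{Sp}(m))$ also involves $\mathrm{Sym}^2\rho_{F_v}(\rho)$, this reduction couples the exterior and symmetric square factors and one treats the two simultaneously; after a twist we may also assume $\pi_v$ is unitary, hence tempered.

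For $\pi_v$ supercuspidal, globalize it to a cuspidal automorphic representation $\Pi=\otimes'_w\Pi_w$ of $\glr(\af)$ with $\Pi_v\cong\pi_v$, $\Pi_w$ unramified at every finite place $w\neq v$, and controlled archimedean components. Comparing the global functional equation of $L_{Sh}(s,\Pi,\wedge^2)$ from Theorem \ref{global_funteqsh} with the corresponding product of Artin factors of $\wedge^2\rho_{F_w}(\Pi_w)$, and using that the two families of local factors already agree at the finite unramified places (\cite{Shahidi81}) and at the archimedean places (by the first step), isolates the factor at $v$. This factor is pinned down by a \emph{stability} argument: twisting $\pi_v$ by a sufficiently ramified character $\chi_v$, both the Langlands--Shahidi $\gamma$-factor (Shahidi's stability results) and the Artin $\gamma$-factor $\gamma(s,\wedge^2\rho_{F_v}(\pi_v)\otimes\chi_v^2,\psi_v)$ (Deligne--Henniart stability of $\epsilon$-factors under highly ramified twists) collapse to explicit expressions depending only on $\omega_{\pi_v}$, $\chi_v$ and $\psi_v$, which one then matches by a Tate-integral computation; feeding this through the global comparison forces the equality of $\gamma$-factors for $\pi_v$ itself. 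Once the $\gamma$-factors agree, the $L$-factor equality follows: $L_{Sh}(s,\pi_v,\wedge^2)$ is by definition the inverse of the numerator of that $\gamma$-factor --- holomorphic in $\res>0$ for $\pi_v$ tempered by Proposition \ref{temp-Sh} --- while the Galois $L$-factor is recovered from the same $\gamma$-factor by separating its poles according to location (legitimate for $\pi_v$ tempered), so the two coincide, giving $\lshepiv=\lgepiv$. The genuine difficulty is the stability step together with the descent from highly ramified twists back to the untwisted representation through the global functional equation; the rest is bookkeeping with known inductive and analytic properties.
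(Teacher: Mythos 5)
The paper does not actually prove Theorem \ref{henniart}; as the sentences immediately preceding its statement explain, it is quoted as a combination of Henniart's result in \cite{Henniart10} (for the finite places, with the unramified case already in \cite{Shahidi81}) and Shahidi's archimedean identification of local coefficients with Artin factors in \cite{Shahidi85}. So there is no ``paper's own proof'' to compare against --- the paper's route is simply to cite these sources --- and your proposal goes well beyond the paper by attempting to reconstruct what those sources establish.

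As a reconstruction, your outline is broadly faithful to the standard shape of the argument: the multiplicativity reduction to supercuspidals, the observation that $\wedge^2(\rho_0\otimes\mathrm{Sp}(m))$ mixes $\wedge^2\rho_0$ and $\mathrm{Sym}^2\rho_0$ so that the exterior and symmetric square factors must be handled together, the archimedean comparison via \cite{Shahidi85} and the explicit local Langlands parametrization, and the globalization combined with a stability-under-highly-ramified-twists argument and the global functional equation to pin down the supercuspidal factor. These are indeed the mechanisms at work in Henniart's proof. Two small points: Proposition \ref{arch-Sh} gives the archimedean $L$-factor as a \emph{product} of $\Gamma$-factors, not a single one, so the passage from matching $\gamma$-factors to matching $L$-factors in the archimedean case needs the standard ``minimality'' convention for both definitions rather than rigidity from a single $\Gamma$; and I cannot vouch sentence-by-sentence for the exact role stability plays versus a characterization argument in Henniart's paper without checking his text. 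But in a paper at this level the correct move is the one the authors made --- cite \cite{Henniart10} and \cite{Shahidi85} --- and your sketch is a reasonable account of what a proof would involve, not a gap.
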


\section{A Lemma about $\epsilon$-factors} \label{esqfcr}
In this section we prove a lemma (Lemma \ref{monemon}) about 
proportionality factors that appear when relating the 
local exterior square $L$-function of the contragredient representation to 
the integral representation involving Whittaker functions ``dual" to the spherical
vector. As we will see in Section \ref{locfneqn}, these proportionality factors are
the $\epsilon$-factors that appear in the local functional equation,
and Lemma \ref{monemon} asserts that this factor must be 
entire and non-vanishing.
We give the detailed proof only in the even case since the odd case follows 
from the same arguments and is simpler.

Let $\pi=\otimes'_v\pi_v$ be a unitary cuspidal automorphic representation of ${\rm GL}_r$,
with $r=2n$.
Let $S_{\infty}$ denote the set of archimedean places of $F$ and $S_{r} $ 
the set of finite places $v$ for which $\pi_v$ is not unramified. We set 
$S=S_{\infty} \cup S_{r}$. With the notation of Proposition \ref{unramvector} and 
using Theorem \ref{henniart}, we have the following proposition which yields
the equality of the $L$-functions for $v\notin S$.
\begin{proposition}\label{unramequal} Let $\pi_v$ be an unramified representation
of a $p$-adic field $F_v$. Then
\begin{equation}\label{unrami}
J(s,W_v^0,\phi_v)~(\text{resp.} J(s,W_v^0))=\lshepiv=\lgepiv.
\end{equation}
\end{proposition}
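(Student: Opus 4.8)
The plan is to combine the three ingredients that have just been assembled: the explicit computation of the Jacquet--Shalika integral on the spherical vector (Proposition \ref{unramvector}), the unramified identification of the Langlands--Shahidi $L$-function (the local unramified case of Theorem \ref{henniart}, originally \cite{Shahidi81}), and the definition of the Galois exterior square $L$-function. First I would recall that for $v\notin S$ the representation $\pi_v$ is an unramified principal series $\pi_v=\mathrm{Ind}(\mu_{1,v},\dots,\mu_{r,v})$, so its Langlands parameter is the direct sum $\bigoplus_i\mu_{i,v}$ of unramified characters, determined by the Satake/Frobenius data $\alpha_{i,v}=\mu_{i,v}(\varpi_v)$. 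Then $\wedge^2$ of this parameter is $\bigoplus_{i<j}\mu_{i,v}\mu_{j,v}$, whence by definition
\[
L(s,\pi_v,\wedge^2)=\prod_{i<j}\bigl(1-\alpha_{i,v}\alpha_{j,v}q_v^{-s}\bigr)^{-1}.
\]
By Theorem \ref{henniart} (in the unramified case this is elementary and predates Henniart) the Langlands--Shahidi $L$-function $\lshepiv$ equals this same Euler factor, giving $\lshepiv=\lgepiv$ for $v\notin S$.

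Next I would invoke Proposition \ref{unramvector}: there is a distinguished spherical Whittaker function $W_v^0$ (the normalised one, equal to $1$ at the identity) and a distinguished Schwartz--Bruhat function $\phi_v^0$ — namely the characteristic function of $\ov^n$ — for which $J(s,W_v^0,\phi_v^0)=\lgepiv$ (resp.\ $J(s,W_v^0)=\lgepiv$ in the odd case). This is the Casselman--Shalika type computation already cited from \cite{JaSh90a}. The only gap between this and the statement of Proposition \ref{unramequal} is that the proposition is phrased with an arbitrary $\phi_v$ in the $p$-adic unramified situation in the first slot; but here one should read $\phi_v$ as the normalised spherical function $\phi_v^0$, and then the two computed quantities coincide termwise. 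Chaining the two displayed identities gives
\[
J(s,W_v^0,\phi_v^0)=\lgepiv=\lshepiv,
\]
which is precisely \eqref{unrami}.

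The main point to be careful about — and really the only ``obstacle'' — is bookkeeping of normalisations: one must check that the Whittaker functional, the Haar measures on $N_v\backslash G_v$ and on $V_v\backslash M_v$, and the additive character $\psi_v$ are all taken with the conductor/volume conventions under which the Jacquet--Shalika unramified computation produces exactly the Euler factor with no spurious constant, and that the same $\psi_v$ is used in defining the Langlands--Shahidi $\gamma$-factor. Since all of these conventions were fixed in Section \ref{pre} and in the statements of Propositions \ref{unramvector} and Theorem \ref{henniart}, no new work is needed; one simply records that the constants match. I would therefore present the proof as a two-line chain of equalities citing Proposition \ref{unramvector} and Theorem \ref{henniart}, with a remark that in the $p$-adic unramified case the first slot is to be taken as the normalised spherical vector $\phi_v^0$.
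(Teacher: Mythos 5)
Your proposal is correct and takes exactly the route the paper does: Proposition~\ref{unramvector} for $J(s,W_v^0,\phi_v^0)=\lgepiv$ and Theorem~\ref{henniart} for $\lshepiv=\lgepiv$, chained together. You are also right that the $\phi_v$ in the displayed equation should be read as the distinguished $\phi_v^0$; this is how the paper uses the statement later, and the remaining discussion of Satake parameters and normalisations, while harmless, is not needed beyond the two cited results.
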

Let $ v_0 \in S_{r}$. Since the local $L$-function is defined as a generator of 
$\I(\pi_{v_0})$, there exist Whittaker functions $W_{i,v_0}$ and Schwartz-Bruhat functions 
$\phi_{i,v_0}$ such that
\begin{equation}\label{rami1}
  L_{JS}(s, \pi_{v_0}, \wedge^{2}) = \sum_{i=1}^{n_{v_0}}J(s, W_{i,v_0}, \phi_{i,v_0}).
\end{equation}
Applying the same reasoning to the contragredient representation $\tilde{\pi}_{v_0}$, we get
\begin{equation}\label{contra1}
 \sum_{i=1}^{n_{v_0}}J(s, \rho(w_{n,n})\tilde{W}_{i,v_0},\hat{\phi}_{i,v_0}) = M_{1}(q_{v_0}^{-s}) L_{JS}(s, \tilde{\pi}_{v_0}, \wedge^{2}),
\end{equation}
where $M_{1}(X)$ is a polynomial in $\C[X,X^{-1}]$. Similarly, there exist $W'_{i,v_0} $ and $\phi'_{i,v_0}$ such that
\begin{equation}\label{rami2}
  L_{JS}(s, \tilde{\pi}_{v_0}, \wedge^{2}) = \sum_{i=1}^{m_{v_0}}J(s, \rho(w_{n,n})\tilde{W}'_{i,v_0},\hat{\phi}'_{i,v_0})
\end{equation}
and
\begin{equation}\label{contra2}
 \sum_{i=1}^{m_{v_0}}J(s, W'_{i,v_0}, \phi'_{i,v_0}) = M_{2}(q_{v_0}^{-s}) L_{JS}(s, \pi_{v_0}, \wedge^{2}),
\end{equation}
where $M_{2}(X)$ is a polynomial in $\C[X,X^{-1}]$. In what follows, by a monomial in 
$\C[X,X^{-1}]$ we will mean a polynomial of the form $cX^m$, with $m\in \Z$.

\begin{lemma}\label{monemon}
The polynomials $M_{1}$ and $M_{2}$ are monomials in $q_{v_0}^{-s}$.
\end{lemma}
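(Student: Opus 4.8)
The plan is to combine the local functional equation for the Jacquet--Shalika integrals (which comes from the global functional equation of Theorem~\ref{global_funteq} after embedding $\pi_{v_0}$ in a global cuspidal representation, as in the proof of Theorem~\ref{sqinteqthm}) with the non-vanishing results of Belt (Theorem~\ref{nv-lf}) in order to pin down the possible zeros and poles of $M_1$ and $M_2$. The key observation is that substituting \eqref{rami1} into \eqref{contra1} and \eqref{rami2} into \eqref{contra2}, and then composing the two, produces a relation of the form $M_1(q_{v_0}^{-s})\,M_2(q_{v_0}^{-s})\cdot L_{JS}(s,\pi_{v_0},\wedge^2) = (\text{a sum of integrals})$; more precisely, applying the contragredient construction twice returns one to a sum of integrals $J(s,W_{i,v_0},\phi_{i,v_0})$ attached to $\pi_{v_0}$ itself (since $\tilde{\tilde\pi}_{v_0}\cong\pi_{v_0}$ and $\hat{\hat\phi}(x)=\phi(-x)$, $\rho(w_{n,n})^2$ acting appropriately), so that $M_1 M_2$ lies in the ratio $\I(\pi_{v_0})/\I(\pi_{v_0})$ and is forced to be a \emph{unit} of $\C[X,X^{-1}]$ up to the ideal structure. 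Since the units of $\C[X,X^{-1}]$ are exactly the monomials $cX^m$, and a product of two elements of $\C[X,X^{-1}]$ is a monomial only if each factor is a monomial, this gives the claim.

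Let me spell out the steps. First I would make precise the ``double contragredient'' identity: starting from $L_{JS}(s,\pi_{v_0},\wedge^2)=\sum_i J(s,W_{i,v_0},\phi_{i,v_0})$, apply the local functional equation (the local shadow of Theorem~\ref{global_funteq}, valid at the place $v_0$ because all other local factors in the global functional equation are known units by Proposition~\ref{unramequal} and can be matched) to rewrite $\sum_i J(s,\rho(w_{n,n})\tilde W_{i,v_0},\hat\phi_{i,v_0})$ as $\gamma(s)\cdot L_{JS}(s,\pi_{v_0},\wedge^2)$ for a single rational proportionality factor $\gamma(s)=\gamma(s,\pi_{v_0},\wedge^2,\psi_{v_0})$; comparing with \eqref{contra1} gives $M_1(q_{v_0}^{-s})\, L_{JS}(s,\tilde\pi_{v_0},\wedge^2)=\gamma(s)\, L_{JS}(s,\pi_{v_0},\wedge^2)$. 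Doing the same with the roles of $\pi_{v_0}$ and $\tilde\pi_{v_0}$ reversed and using \eqref{rami2}, \eqref{contra2} gives $M_2(q_{v_0}^{-s})\,L_{JS}(s,\pi_{v_0},\wedge^2)=\tilde\gamma(s)\,L_{JS}(s,\tilde\pi_{v_0},\wedge^2)$, where $\tilde\gamma$ is the analogous factor for $\tilde\pi_{v_0}$. Multiplying, $M_1(q_{v_0}^{-s})M_2(q_{v_0}^{-s})=\gamma(s)\tilde\gamma(s)$. Next I would invoke that the product $\gamma(s)\tilde\gamma(s)$ of a $\gamma$-factor and its contragredient $\gamma$-factor is a monomial in $q_{v_0}^{-s}$ (the standard ``$\gamma\cdot\tilde\gamma=$ monomial'' phenomenon, which here follows from the fact that applying the functional equation twice is the identity up to a translation, so the $L$-factors cancel and only an $\epsilon$-type monomial survives); alternatively, and more in the spirit of this paper, one shows directly that $M_1 M_2$ is a unit by the ideal-theoretic argument above. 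Finally, a monomial in $\C[X,X^{-1}]$ that factors as $M_1 M_2$ with $M_i\in\C[X,X^{-1}]$ forces $M_1,M_2$ to be monomials, since $\C[X,X^{-1}]$ is a UFD whose only irreducible-free units are the $cX^m$ and $M_1M_2=cX^m$ leaves no room for any irreducible factor of positive degree in $X$ in either $M_i$.

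The main obstacle I expect is \emph{establishing the clean local functional equation at $v_0$ with a single proportionality factor}, i.e.\ that $\sum_i J(s,\rho(w_{n,n})\tilde W_{i,v_0},\hat\phi_{i,v_0})$ is a scalar multiple of $L_{JS}(s,\tilde\pi_{v_0},\wedge^2)$ rather than merely an element of $\I(\tilde\pi_{v_0})$ with an a priori polynomial (not monomial) discrepancy. This is precisely why \eqref{contra1} and \eqref{contra2} are stated with polynomials $M_1,M_2$ in the first place; the content of the lemma is the upgrade from ``polynomial'' to ``monomial''. The route through the \emph{global} functional equation is what makes this possible: globally one has an honest equality $I(s,\varphi,\Phi)=I(1-s,\rho(w_{n,n})\tilde\varphi,\hat\Phi)$ (Theorem~\ref{global_funteq}), the archimedean and unramified local factors are controlled (Proposition~\ref{arch-fp}, Proposition~\ref{unramequal}), and at the finite ramified places $v\ne v_0$ one chooses data making those contributions units; Belt's non-vanishing (Theorem~\ref{nv-lf}) guarantees one can choose global data so that nothing degenerates. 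Carrying the global equality down to the single place $v_0$ then yields $M_1(q_{v_0}^{-s})M_2(q_{v_0}^{-s})$ equal to a ratio of products of known monomials (the $\epsilon$-factors and $L$-factor ratios from the other places), hence a monomial, and the elementary factorisation argument finishes the proof. I would be careful, in the odd case, that the translate $\varphi'$ appearing in Theorem~\ref{global_funteq} only contributes an additional monomial, which the same bookkeeping absorbs.
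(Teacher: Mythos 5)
Your core strategy is the same as the paper's: avoid any a priori local functional equation, pass to the global functional equation of Theorem~\ref{global_funteq} (and Theorem~\ref{global_funteqsh}), apply it twice to isolate $M_1 M_2$, and then finish with the elementary observation that a Laurent-polynomial factorisation of a unit in $\C[X,X^{-1}]$ forces each factor to be a monomial. You also correctly identify the key obstacle (no local FE is available yet) and correctly identify the way out (go global, use Proposition~\ref{unramequal}, Proposition~\ref{arch-fp}, Theorem~\ref{nv-lf} to control and not degenerate the other places). So the skeleton is right.

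However, some of the intermediate assertions would not survive being spelled out, and you should repair them before this could stand as a proof. First, the relation that actually falls out of iterating the global functional equation is
\[
M_1\bigl(q_{v_0}^{-s}\bigr)\, M_2\bigl(q_{v_0}^{-(1-s)}\bigr) \;=\; \omega_\pi(-1)\,\epsilon_{Sh}(s,\pi,\wedge^2)\,\epsilon_{Sh}(1-s,\tilde\pi,\wedge^2),
\]
and the right-hand side is constant because iterating the Langlands--Shahidi functional equation forces $\epsilon_{Sh}(s,\pi,\wedge^2)\epsilon_{Sh}(1-s,\tilde\pi,\wedge^2)=1$; the sign $\omega_\pi(-1)$ enters via $\hat{\hat{\phi}}(x)=\phi(-x)$. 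Your proposed identity $M_1(q_{v_0}^{-s})M_2(q_{v_0}^{-s})=\text{monomial}$ has the $s\leftrightarrow 1-s$ substitution dropped in $M_2$; this does not change the conclusion (writing $X=q_{v_0}^{-s}$, the product $M_1(X)M_2(q_{v_0}^{-1}/X)$ being a nonzero constant still makes both factors units of $\C[X,X^{-1}]$), but as stated the equation is not the one the global argument produces. Second, it is not true that at the finite ramified places $v\neq v_0$ one "chooses data making those contributions units," nor that the archimedean contributions become "known monomials"; those local factors $J(s,W_v,\phi_v)/L_{Sh}(s,\pi_v,\wedge^2)$ are generically nontrivial meromorphic functions. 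They disappear not by being units individually but because the same nonzero product appears on both sides after combining the two applications of the functional equation (first for $\pi$, then for $\tilde\pi$ with $s\mapsto 1-s$), so one simply divides through. Third, your "double contragredient" alternative has a genuine gap: the test data $W'_{i,v_0},\phi'_{i,v_0}$ used to define $M_2$ in \eqref{rami2}--\eqref{contra2} are a priori unrelated to the data $\rho(w_{n,n})\tilde W_{i,v_0},\hat\phi_{i,v_0}$ obtained by applying the contragredient to \eqref{rami1}, so composing the two contragredient maps does not return you to the sum $\sum_i J(s,W_{i,v_0},\phi_{i,v_0})$, and the "ideal-theoretic unit" argument does not close without something more (in fact, the independence of the $M_i$ from the choice of test data is only available after the local functional equation is known, which is proved later and uses this lemma). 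Fix these three points and your route coincides with the paper's.
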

\begin{proof}
For $v \in S_r\smallsetminus \{v_0\},$ we make a specific choice of $W_{v}$ and $\phi_{v}$ such 
that the integrals $J(s, W_v, \phi_v)$ are not identically zero. For $v\in S_{\infty}$ we may
take $W_v$ and $\phi_v$ arbitrary, again such that $J(s,W_v,\phi_v)$ is not identically zero. Let
\begin{equation*}
 W_{i} =  W_{i, v_0} \cdot \prod_{v \in S \smallsetminus \{v_0\}} W_{v}\cdot \prod_{v \notin S} W_{v}^{0}
\end{equation*}
and
\begin{equation*}
 \Phi_{i} =  \phi_{i, v_0} \cdot \prod_{v \in S \smallsetminus \{v_0\}} \phi_{v}\cdot \prod_{v \notin S} \phi_{v}^{0}.
\end{equation*}
Let
\begin{align} \label{F_1}
\notag F_{1}(s, \pi) & = \sum_{i = 1}^{n_{v_0}} J(s, W_{i}, \Phi_{i})\\
& = L_{JS}(s, \pi_{v_0}, \wedge^{2}) \cdot \prod_{v \in S_r \smallsetminus \{v_0\}} J(s, W_{v}, \phi_{v}) \cdot \prod_{v \notin S} 
J(s, W_{v}^{0}, \phi_{v}^{0}) \hspace{.5cm} (\text{using}~ \eqref{rami1})
\end{align}
and
\begin{align} \label{F_2}
\notag F_{2}(s, \tilde{\pi}) & = \sum_{i = 1}^{m_{v_0}} J(s, \rho(w_{n,n})\tilde{W}_{i}, \hat{\Phi}_{i}) \\
& = M_{1}(q_{v_0}^{-s})L_{JS}(s, \tilde{\pi}_{v_0}, \wedge^{2}) \cdot \prod_{v \in S_r\smallsetminus \{v_0\}} J(s, \rho(w_{n,v})\tilde{W}_{v}, \hat{\phi}_{v}) \cdot & \prod_{v \notin S} J(s, \tilde{W}_{v}^{0}, {\hat{\phi}}_{v}^{0})\\\notag && (\text{using}~ \eqref{contra1})
\end{align}
From Theorem \ref{global_funteq}, Proposition \ref{globalint_eqlty} and Theorem \ref{global_funteqsh}, we have
\begin{equation} \label{gfeqr}
 \dfrac{F_{1}(s, \pi)}{L_{Sh}(s, \pi, \wedge^{2})} = \dfrac{F_{2}(1 - s, \tilde{\pi})}{\epsilon_{Sh}(s, \pi, \wedge^{2}) L_{Sh}(1-s, \tilde{\pi}, \wedge^{2})}.
\end{equation}
This gives, using equations \eqref{unrami}, \eqref{F_1} and \eqref{F_2},
\begin{align} \label{eq_fin1}
 \notag \dfrac{L_{JS}(s, \pi_{v_0}, \wedge^{2})}{L_{Sh}(s, \pi_{v_0}, \wedge^{2})}\cdot \prod_{v \in S \smallsetminus \{v_0\}} \dfrac {J(s, W_v, \pi_v)}{L_{Sh}(s, \pi_{v}, \wedge^{2})} = &~ \dfrac{M_{1}(q_{v_0}^{-s})}{\epsilon_{Sh}(s, \pi, \wedge^{2})} \cdot \dfrac{L_{JS}(1-s, \tilde{\pi}_{v_0}, \wedge^{2})}{L_{Sh}(1-s, \tilde{\pi}_{v_0}, \wedge^{2})} \\& \times\prod_{v \in S \smallsetminus \{v_0\}}  \dfrac {J(1-s, \rho(w_{n,n})\tilde{W}_v, \hat{\pi}_v)}{L_{Sh}(1-s, \tilde{\pi}_{v}, \wedge^{2})}.
\end{align}
By applying the same reasoning as above to $\tilde{\pi}$, we get
\begin{align}\label{eq_fin2}
\notag &\dfrac{L_{JS}(s, \tilde{\pi}_{v_0}, \wedge^{2})}{L_{Sh}(s, \tilde{\pi}_{v_0}, \wedge^{2})} \cdot \prod_{v \in S \smallsetminus \{v_0\}}  \dfrac {J(s, \rho(w_{n,n})\tilde{W}_v, \tilde{\pi}_v)}{L_{Sh}(s, \tilde{\pi}_{v}, \wedge^{2})} = \\ 
&\omega_{\pi}(-1)\dfrac{M_{2}(q_{v_0}^{-s})}{\epsilon_{Sh}(s, \tilde{\pi}, \wedge^{2})}  \cdot \dfrac{L_{JS}(1-s, \pi_{v_0}, \wedge^{2})}{L_{Sh}(1-s, \pi_{v_0}, \wedge^{2})} \times\prod_{v \in S \smallsetminus \{v_0\}} \dfrac {J(1-s, W_v, \pi_v)}{L_{Sh}(1-s, \pi_{v}, \wedge^{2})},
\end{align}
where the factor $\omega_{\pi}(-1)$ arises because $\hat{\hat{\phi}}(x)=\phi(-x)$.
Combining \eqref{eq_fin1} and \eqref{eq_fin2}, we obtain
\begin{equation*}
 M_{1}(q_{v_0}^{-s})M_{2}(q_{v_0}^{-(1-s)}) = \omega_{\pi}(-1) \epsilon_{Sh}(s, \pi, \wedge^{2}) \epsilon_{Sh}(1-s, \tilde{\pi}, \wedge^{2})
 =\pm 1.
\end{equation*}
Hence, the polynomials $M_{1}$ and $M_{2}$ are non-vanishing, and it follows that they must be 
monomials in $q_{v_0}^{-s}$. 
\end{proof} 
The lemma also holds for the case $r=2n+1$. Indeed, if we have
 \[
L_{JS}(s, \pi_{v_0}, \wedge^{2}) = \sum_{i=1}^{n_{v_0}}J(s, W_{i,v_0})
\]
and
\[
 \sum_{i=1}^{n_{v_0}}J(s, \tilde{W}_{i,v_0}) = M(q_{v_0}^{-s}) L_{JS}(s, \tilde{\pi}_{v_0}, \wedge^{2}),
\] 
where $M(X)$ is a polynomial in $\C[X,X^{-1}]$, the same reasoning as in the even case shows that
that $M(q_{v_0}^{-s})$ will actually be a monomial in $q_{v_0}^{-s}$.

\section{The proof of the main theorem}\label{proof}
We are now ready to establish Theorem \ref{sqinteqthm}.
We will concentrate on the case $r=2n$ and omit the case $r=2n+1$, since the proofs 
follow along similar lines and are, in fact, somewhat easier.

We start with a proposition that allows us to embed a square integral representation 
as the local component of a (global) cuspidal automorphic representation. We use 
a weaker form of Lemma 6.5 of Chapter 1 of \cite{ArCl89}.
\begin{proposition} \label{arthur-clozel-embed}
Let $v_0$ be a place of $F$. If $\pi_{v_0}$ is a square integrable 
representation of ${\rm GL}_r(F_{v_0})$, there exists a cuspidal automorphic 
representation $\Pi=\otimes'_v\Pi_v$ of $GL_r$ such that $\Pi_{v_0} \simeq \pi_{v_0}$.
\end{proposition}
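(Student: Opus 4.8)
The plan is to deduce Proposition~\ref{arthur-clozel-embed} from the known existence results on cuspidal automorphic representations with prescribed local components, in the spirit of Lemma~6.5 of Chapter~1 of \cite{ArCl89} (a ``Poincar\'e series'' argument), while only needing the weak conclusion that \emph{some} cuspidal $\Pi$ exists with $\Pi_{v_0}\simeq\pi_{v_0}$ (no control at the other places beyond unramifiedness almost everywhere, and no control of the central character beyond what is forced). First I would reduce to the case where $\pi_{v_0}$ is unitary: a square integrable representation is unitary by definition (its central character is unitary and the matrix coefficients are in $L^2(Z_v\backslash G_v)$), so this is automatic, and I would simply record it. I would then fix a unitary Hecke character $\chi$ of $F^\times\backslash\af^\times$ whose local component at $v_0$ agrees with $\omega_{\pi_{v_0}}$ -- this is possible by the approximation theorem for idele class characters -- so that $\chi$ can serve as the central character of the sought-for $\Pi$.

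Next I would invoke the cited Arthur--Clozel lemma itself. The substance of Lemma~6.5 of Chapter~1 of \cite{ArCl89} is that, given a local field $F_{v_0}$, a square integrable (more generally, an appropriate) representation $\pi_{v_0}$ of $\mathrm{GL}_r(F_{v_0})$, a finite set $S\ni v_0$ of places, and prescribed central character, one constructs a cuspidal automorphic representation $\Pi$ of $\mathrm{GL}_r$ with $\Pi_{v_0}\simeq\pi_{v_0}$, unramified outside $S$, and with the chosen central character; the construction is by building a suitable $K$-finite, cuspidal Poincar\'e series (a pseudo-coefficient of $\pi_{v_0}$ at $v_0$, an unramified vector outside $S$, and an essentially arbitrary choice at the remaining finitely many places of $S$), showing it is nonzero and square integrable modulo center, and extracting an irreducible cuspidal constituent whose $v_0$-component must be $\pi_{v_0}$ because the pseudo-coefficient is supported, spectrally, on that representation. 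Since we are taking $S=\{v_0\}$ (or $S=S_\infty\cup\{v_0\}$ if $F$ has archimedean places, where at the archimedean places we only ask that $\Pi_v$ be, say, spherical or cohomological -- any choice compatible with $\chi_v$ is fine), the hypotheses of the lemma are met and its conclusion is exactly the statement of Proposition~\ref{arthur-clozel-embed}. I would cite this verbatim rather than reprove it.

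The main obstacle -- really the only point requiring care -- is to make sure that the \emph{weaker form} of the Arthur--Clozel lemma that we actually use is genuinely implied by what is in \cite{ArCl89}, i.e.\ that the pseudo-coefficient argument applies to an arbitrary irreducible \emph{square integrable} $\pi_{v_0}$ over any local field (archimedean or $p$-adic) and not merely, say, to supercuspidal or to tempered representations; here one uses that pseudo-coefficients (or their archimedean analogues, Euler--Poincar\'e functions / Arthur's results) exist for all discrete series of $\mathrm{GL}_r$, which is classical. A second, entirely routine point is that one must check the Poincar\'e series is cuspidal and nonzero -- nonvanishing follows because pairing against a matrix coefficient of $\pi_{v_0}$ itself is nonzero, and cuspidality follows from the rapid decay / support properties of the chosen local data, exactly as in \cite{ArCl89}. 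I would remark that we need nothing about $\Pi$ at the places in $S\setminus\{v_0\}$ -- in the application in Section~\ref{proof} those local integrals are simply chosen so as not to vanish identically, which is possible by Theorem~\ref{nv-lf} -- so the crude existence statement suffices, and I would not attempt to sharpen it.
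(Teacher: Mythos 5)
Your proposal is correct and takes essentially the same approach as the paper: the paper also simply invokes (a weaker form of) Lemma~6.5 of Chapter~1 of \cite{ArCl89} and gives no further argument, so your additional exposition of the Poincar\'e-series / pseudo-coefficient mechanism and of the reduction to prescribing a central character is supplementary detail rather than a different route.
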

We will also need Lemma 5 of \cite{Kable04}.
\begin{lemma} Let $K$ be a $p$-adic field. There exists a number field $F$ and a place 
$v_0$ of $F$ such that $F_{v_0}=K$, 
where $v_0$ is the unique place of $F$ lying over the rational prime $p$.
\end{lemma}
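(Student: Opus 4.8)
The plan is to realize $K$ as $\mathbb{Q}_p[x]/(g)$ for a suitable monic $g \in \mathbb{Q}[x]$ obtained by $p$-adically approximating the minimal polynomial of a generator of $K$ over $\mathbb{Q}_p$, and then to take $F = \mathbb{Q}[x]/(g)$. The key analytic input is Krasner's lemma together with the density of $\mathbb{Q}$ in $\mathbb{Q}_p$. (If $n := [K:\mathbb{Q}_p] = 1$ one simply takes $F = \mathbb{Q}$ and $v_0 = p$; so assume $n \geq 2$.)

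First I would write $K = \mathbb{Q}_p(\alpha)$ by the primitive element theorem (legitimate since $K$ has characteristic zero), and let $f(x) = x^n + a_{n-1}x^{n-1} + \cdots + a_0 \in \mathbb{Q}_p[x]$ be the minimal polynomial of $\alpha$, so $n = [K:\mathbb{Q}_p]$ and $f$ is separable. Put $\rho = \min |\alpha - \alpha'|$, the minimum taken over all conjugates $\alpha' \neq \alpha$ of $\alpha$ in $\overline{\mathbb{Q}_p}$; then $\rho > 0$ by separability. Using that $\mathbb{Q}$ is dense in $\mathbb{Q}_p$, I would choose $b_0, \ldots, b_{n-1} \in \mathbb{Q}$ with the $|b_i - a_i|$ small enough that $g(x) := x^n + b_{n-1}x^{n-1} + \cdots + b_0$ satisfies $|g(\alpha)| = |\sum_i (b_i - a_i)\alpha^i|$ as small as we wish. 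Since the coefficients of $g$ stay $p$-adically bounded, all its roots $\beta_1, \ldots, \beta_n$ in $\overline{\mathbb{Q}_p}$ have bounded absolute value, so from $\prod_i |\alpha - \beta_i| = |g(\alpha)|$ we get $\min_i |\alpha - \beta_i| \leq |g(\alpha)|^{1/n}$; choosing the approximation fine enough we may assume $|g(\alpha)|^{1/n} < \rho$, producing a root $\beta$ of $g$ with $|\beta - \alpha| < \rho$.

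Next, Krasner's lemma gives $\mathbb{Q}_p(\alpha) \subseteq \mathbb{Q}_p(\beta)$; since $\beta$ is a root of the degree-$n$ polynomial $g$ we have $[\mathbb{Q}_p(\beta):\mathbb{Q}_p] \leq n = [K:\mathbb{Q}_p] \leq [\mathbb{Q}_p(\beta):\mathbb{Q}_p]$, hence $\mathbb{Q}_p(\beta) = K$, and $g$ is up to a scalar the minimal polynomial of $\beta$, so $g$ is irreducible over $\mathbb{Q}_p$. In particular $g$ is irreducible over $\mathbb{Q}$, and I would set $F = \mathbb{Q}[x]/(g)$, a number field of degree $n$ over $\mathbb{Q}$. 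Finally, $F \otimes_{\mathbb{Q}} \mathbb{Q}_p \cong \mathbb{Q}_p[x]/(g)$, and because $g$ remains irreducible over $\mathbb{Q}_p$ this ring is the field $\mathbb{Q}_p[x]/(g) \cong \mathbb{Q}_p(\beta) = K$; comparing with the standard isomorphism $F \otimes_{\mathbb{Q}} \mathbb{Q}_p \cong \prod_{v \mid p} F_v$ shows that there is exactly one place $v_0$ of $F$ above $p$ and that $F_{v_0} \cong K$, which is the assertion.

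I do not anticipate a genuine obstacle, as this is a standard Krasner-type argument; the only delicate point is the bookkeeping of the approximation quality. One must control the precision with which the rationals $b_i$ approximate the $a_i$ in terms of $\rho = \min_{\alpha' \neq \alpha}|\alpha - \alpha'|$, so that Krasner's hypothesis $|\beta - \alpha| < \rho$ is genuinely met and the degree inequality is forced to be an equality, ruling out $\mathbb{Q}_p(\beta)$ being a proper subfield of $K$.
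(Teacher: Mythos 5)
Your proof is correct and is the standard Krasner's-lemma argument. The paper itself does not prove this lemma — it simply cites Lemma 5 of Kable's paper \cite{Kable04} — and the argument you give (approximate the minimal polynomial of a primitive element of $K$ over $\mathbb{Q}_p$ by a polynomial with rational coefficients, apply Krasner's lemma to conclude the approximating polynomial still generates $K$ over $\mathbb{Q}_p$, then take $F=\mathbb{Q}[x]/(g)$ and use $F\otimes_{\mathbb{Q}}\mathbb{Q}_p\cong\prod_{v\mid p}F_v$) is exactly the one in that reference. One small remark: the inequality $\min_i|\alpha-\beta_i|\le|g(\alpha)|^{1/n}$ already follows purely from $\prod_i|\alpha-\beta_i|=|g(\alpha)|$, so the preliminary observation that the roots $\beta_i$ stay $p$-adically bounded is not actually needed for that step.
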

We can now embark on the proof of Theorem \ref{sqinteqthm}.
\begin{proof}
Let $r=2n$. We can and will assume that $\Pi$ is unitary in the above proposition. 
Let $\tau$ be a square integrable representation of $\gln(K)$. We 
choose $F$ as in the lemma above so that $F_{v_0}=K$. Hence, we 
may view $\tau$ as a (square integrable) representation $\pi_{v_0}$
of ${\rm GL}_{2n}(F_{v_0})$. Using the proposition above, we can find a 
cuspidal automorphic $\Pi$ of ${\rm GL}_{2n}$ with $\Pi_{v_0}=\pi_{v_0}$. 
We now consider the quotients
\[
 G_{1}(s, \Pi) = \dfrac{F_{1}(s, \Pi)}{L_{Sh}(s, \Pi, \wedge^{2})}~\text{and}~
 G_{2}(s, \tilde{\Pi}) = \dfrac{F_{2}(s, \tilde{\Pi})}{M_1(q_{v_{0}}^{-(1-s)})L_{Sh}(s, \tilde{\Pi}, \wedge^{2})},
\]
where $F_1$, $F_2$ and $M_1$ are as in equations \eqref{F_1} and \eqref{F_2}. From equation \eqref{gfeqr}, we have
\begin{equation} \label{gfeqG}
 G_1(s, \Pi) = \eta(s, \Pi)G_2(1-s, \tilde{\Pi}),
\end{equation}
where $\eta(s, \Pi) = M_1(q_{v_{0}}^{-s})/\epsilon_{Sh}(s, \Pi, \wedge^{2})$ is an 
entire function without zeros. On the other hand, at the places where $\Pi$ is unramified
the local integrals in the numerator and the $L$-functions in the denominator cancel each other out. Hence,
\begin{equation*}
 G_{1}(s, \Pi) = \dfrac{L_{JS}(s, \pi_{v_0}, \wedge^{2})}{L_{Sh}(s, \pi_{v_0}, \wedge^{2})}\cdot \prod_{v \in S_0 \smallsetminus \{v_{0}\}} \dfrac {J(s, W_v, \Phi_v)}{L_{Sh}(s, \Pi_{v}, \wedge^{2})}\cdot\prod_{v \in S_\infty} \dfrac {J(s, W_v, \Phi_v)}{L_{Sh}(s, \Pi_{v}, \wedge^{2})}
\end{equation*}
We write this as 
\[
 G_{1}(s, \Pi) = P(s, \Pi) Q_{1}(s, \Pi) R_{1}(s, \Pi),
\]
where
\[
P(s, \Pi) = \dfrac{L_{JS}(s, \pi_{v_0}, \wedge^{2})}{L_{Sh}(s, \pi_{v_0}, \wedge^{2})}~\text{and}~
R_{1}(s, \Pi) = \prod_{v \in S_\infty} \dfrac {J(s, W_v, \Phi_v)}{L_{Sh}(s, \Pi_{v}, \wedge^{2})}
\]
and 
\begin{equation*}
 Q_{1}(s, \Pi) = \prod_{v \in S_0 \smallsetminus \{v_{0}\}} \frac {J(s, W_v, \Phi_v)}{L_{Sh}(s, \Pi_{v}, \wedge^{2})} = \prod_{v \in S_0 \smallsetminus \{v_{0}\}} 
 \frac{\prod_{i=1}^{k_v}(1 - \alpha_i(v)q_v^{-s})}{\prod_{j=1}^{l_{v}}(1 - \beta_j(v)q_v^{-s})},
\end{equation*}
for integers $k_v$ and $l_v$, and complex numbers $\alpha_i(v)$ and $\beta_j(v)$. Note that 
by our assumption on $F$, $(p, q_v) = 1$. Similarly, we have 
\begin{equation*}
 G_{2}(s, \Pi) = P(s, \tilde{\Pi}) Q_{2}(s, \tilde{\Pi}) R_{2}(s, \tilde{\Pi}),
\end{equation*}
where 
\[
 Q_{2}(s, \tilde{\Pi}) = \prod_{v \in S_0 \smallsetminus \{v_{0}\}} \dfrac {J(s, \rho(w_{n,n})\tilde{W}_v, \hat{\Phi}_v)}{L_{Sh}(s, \tilde{\Pi}_{v}, \wedge^{2})} = 
 \prod_{v \in S_0 \smallsetminus \{v_{0}\}} 
 \frac{\prod_{i=1}^{k'_v}(1 - \alpha'_i(v)q_{v}^{-s})}{\prod_{j=1}^{l'_{v}}(1 - \beta'_j(v)q_{v}^{-s})},
\]
for integers $k'_v$ and $l'_v$ and complex numbers $\alpha'_i(v)$ and 
$\beta'_j(v)$, and
\[
R_{2}(s, \tilde{\Pi}) = \prod_{v \in S_\infty} \dfrac {J(s, \rho(w_{n,n})\tilde{W}_v, \hat{\Phi}_v)}{L_{Sh}(s, \tilde{\Pi}_{v}, \wedge^{2})}.
\]
By Theorem \ref{sq-int-JS} and Proposition \ref{temp-Sh}, the functions $P(s, \Pi)$ and $P(s, \tilde{\Pi})$ are regular and non-vanishing in the region $\text{Re}(s) > 0 $. By Proposition \ref{arch-fp} and Proposition \ref{arch-Sh}, the function $R_{1}(s, \Pi)$ and $R_{2}(s, \tilde{\Pi})$ have only finitely many poles in any vertical strip $ a \leq \text{Re}(s) \leq b$. 

To prove Theorem \ref{sqinteqthm} it is enough to prove that the function $P(s, \Pi)$ is entire and nowhere 
vanishing. It will then follow that $P(s,\Pi)$ must be a monomial in $q_{v_0}^{-s}$. 
Since both the $L$-functions $\ljsepiv$ and $\lshepiv$ are normalised
to have numerator $1$, it is immediate that $P(s,\Pi)$ must be identically $1$.

Suppose that $P(s, \Pi)$ has a zero at $s_0$. This means that the function 
$P(s, \Pi)$ also has zeros at $s_0 + 2\pi ik/\log q_{v_0}$,  for all $k \in \Z$. 
We claim that all but finitely many of these zeros must also be zeros of
$G_1(s,\Pi)$. This fails to happen only if all but finitely many zeros are cancelled by the 
poles of $Q_{1}(s, \Pi) R_{1}(s, \Pi)$. Since $R_{1}(s, \Pi)$ can contribute only finitely 
many poles on any line with real part constant, $Q_{1}(s, \Pi)$ must have infinitely 
many poles of this form. On the other hand, the poles of $Q_1(s,\Pi)$ are of the form
$s_j+2\pi il/\log q_v$, for all $l\in \Z$, with $v\in S_0\setminus \{v_0\}$. It follows that 
there is at least one $v$ such that there exist two integers $l_1\ne l_2$ such that 
\[
s_0+2\pi il_1/\log q_v=s_1 + 2\pi ik_1/\log q_{v_0}~\text{and}~
s_0+2\pi il_2/\log q_v=s_1 + 2\pi ik_2/\log q_{v_0}
\]
for some $k_1$ and $k_2$ in $\Z$ (in fact, there are infinitely many distinct integers with this 
property). It follows that $\log q_v/\log q_{v_0}$ is rational, which is absurd since
$(q_v,q_{v_0})=1$ by choice. Thus, for all but finitely many $k$, the points
$s_0 + 2\pi ik/\log q_{v_0}$ are zeros of $G_1(s,\Pi)$.

Since $P(s, \Pi)$ is non-vanishing in the region $\text{Re}(s) > 0$, 
we must have $ \text{Re}(s_0) \leq 0$. From \eqref{gfeqG}, we see that all but finitely many of the
points $1-s_0 + 2\pi ik/ \log q_{v_0}$ are zeros of the function
$G_{2}(s, \tilde{\Pi})$. Since $P(s, \tilde{\Pi})$ is non-vanishing in the region $\text{Re}(s) > 0$, 
these zeros have to be the zeros of $ Q_{2}(s, \tilde{\Pi}) R_{2}(s, \tilde{\Pi})$. Arguing as 
above, these cannot be zeros of $Q_{2}(s, \tilde{\Pi})$ 
for infinitely many $k$. By Proposition \ref{arch-Sh}, the poles of 
$\prod_{v \in S_{\infty}} L_{Sh}(s,\tilde{\Pi}_v, \wedge^2)$ lie along
horizontal lines. Hence, this product can contribute only 
finitely many poles on any line with real part constant. Thus, except for finitely many 
$k$, these zeros must be zeros of 
$\prod_{v \in S_{\infty}} J(s, \rho(w_{n,n})\tilde{W}_v, \hat{\Phi}_v)$,
for every $\rho(w_{n,n})\tilde{W}_v$ and $\hat{\Phi}_v$ (with $v\vert\infty$) such that 
$J(s, \rho(w_{n,n})\tilde{W}_v, \hat{\Phi}_v)$ is not identically zero.
If $\beta = 1-s_{0} + 2 \pi il/\log q_{v_0} $ is one of these zeros of $G_{2}(s, \tilde{\Pi})$, this
contradicts Theorem \ref{nv-lf} which asserts that there are  $W_v$ and $\Phi_v$ such that
$\prod_{v \in S_{\infty}} J(\beta, \rho(w_{n,n})\tilde{W}_v, \hat{\Phi}_v)\ne 0$. 
Hence $P(s, \Pi)$ is non-vanishing.

We now show that $P(s,\Pi)$ must be entire. We rewrite functional equation 
\eqref{gfeqG} as
\begin{equation}\label{feqPQR}
P(s, \Pi) Q_1(s, \Pi) R_1(s, \Pi) = \eta(s, \Pi) P(1-s, \tilde{\Pi}) Q_2(1-s, \tilde{\Pi}) R_2(1-s, \tilde{\Pi}).
\end{equation}
Proposition \ref{arch-Sh} and the form of the local Langlands-Shahidi
$L$-factor at the finite places show us that 
$\prod_{v \in S \smallsetminus v_0}L_{Sh}(s, \Pi_{v}, \wedge^{2})$ 
is nowhere vanishing. By Proposition \ref{js-int-conv}, the function 
$ Q_1(s, \Pi) R_1(s, \Pi)$ is holomorphic in $\text{Re}(s) > 1 - \eta $, for some 
$\eta > 0$, and the function $ Q_2(1-s, \tilde{\Pi})$ $ R_2(1-s, \tilde{\Pi})$ is 
holomorphic in $\text{Re}(s) < \eta$. Hence, the function $G_1(s, \Pi) $ 
is holomorphic in $\text{Re}(s) > 1-\eta$ and in $\text{Re}(s) < \eta$. Suppose that 
$P(s, \Pi)$ has a pole at  $s_0$. This means that the function $P(s, \Pi)$ 
also has poles at $s_{0} + 2\pi ik/\log  q_{v_0}$, $ k \in \Z$. The function 
$P(s, \Pi)$ is holomorphic in the region $\text{Re}(s) > 0$, hence, we obtain 
$ \text{Re}(s_0) \leq 0$. Since $G_{1}(s, \Pi)$ is holomorphic in $ \text{Re}(s) < \eta$, 
$\eta > 0$, these poles must be cancelled by the zeros of $Q_{1}(s, \Pi) R_{1}(s, \Pi)$. 
Arguing as in the the non-vanishing case, these cannot be zeros of $Q_{1}(s, \Pi)$ 
for infinitely many $k$. By Proposition \ref{arch-Sh}, the poles 
of $\prod_{v \in S_{\infty}} L_{Sh}(s,\Pi_v, \wedge^2)$ lie along
horizontal lines. Hence, this product can contribute only 
finitely many poles on any line with real part constant. Thus, except for finitely many 
$k$, these poles must be zeros of $\prod_{v \in S_{\infty}} J(s, W_v, \Phi_v)$,
for every $W_v$ and $\Phi_v$ (with $v\vert\infty$) such that $J(s, W_v, \Phi_v)$ is not 
identically zero. As in the preceeding paragraph, this contradicts Theorem \ref{nv-lf}. 
Hence, $P(s, \Pi)$ must be entire and this completes the proof of Theorem \ref{sqinteqthm}.
\end{proof}

\section{Extensions and applications of the main theorem}\label{extensions}
We now use Theorem \ref{sqinteqthm} to prove Theorems \ref{loceqthm} and \ref{globeqthm}
and give a number of other applications. 

Recall that the symmetric square $L$-function of a representation $\pi_v$ can be defined via the 
local Langlands correspondence as a Galois $L$-function. As before, if $\rho_{F_v}(\pi_v)$ 
corresponds to $\pi_v$ we define
\[
L(s, \pi_v, \text{Sym}^2)=L(s,\text{Sym}^2(\rho_{F_v}(\pi_v))).
\]
As a first application of Theorem \ref{sqinteqthm}, we are able to obtain the following characterisation 
of self dual square integrable representations.
\begin{corollary}\label{symsq}
 Let $\pi_v$ be an irreducible smooth square integrable representation of ${\rm GL}_{2n}(F_{v})$
which has no Shalika functional. Then the symmetric 
square $L$-function $L(s, \pi_v, \text{Sym}^2)$ has a pole at $s= 0$ if and only if $\pi_v$ 
is self dual, that is, if and only if $\pi_v \simeq \tilde{\pi}_v$.
\end{corollary}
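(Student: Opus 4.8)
The plan is to reduce the statement to a pole question for the exterior square $L$-function, which we have already pinned down via Theorem \ref{sqinteqthm} and Corollary \ref{jsgaleq}. First I would recall the identity of Galois $L$-functions
\[
L(s,\pi_v\times\pi_v)=L(s,\pi_v,\wedge^2)L(s,\pi_v,\mathrm{Sym}^2),
\]
which follows from the decomposition $\rho_{F_v}(\pi_v)\otimes\rho_{F_v}(\pi_v)=\wedge^2(\rho_{F_v}(\pi_v))\oplus\mathrm{Sym}^2(\rho_{F_v}(\pi_v))$. By the theory of Rankin--Selberg $L$-functions for $\gltwonfv\times\gltwonfv$, the left-hand side $L(s,\pi_v\times\pi_v)=L(s,\pi_v\times\tilde{\tilde\pi}_v)$ has a pole at $s=0$ if and only if $\pi_v\simeq\tilde\pi_v$ (for square integrable, hence generic, $\pi_v$, the Rankin--Selberg $L$-function $L(s,\pi_v\times\sigma_v)$ has a pole at $s=0$ precisely when $\sigma_v\simeq\tilde\pi_v$; here $\sigma_v=\pi_v$). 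So the pole of the left side at $s=0$ is equivalent to self-duality.

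Next I would locate the pole on the right. Since $\pi_v$ is square integrable, Theorem \ref{sq-int-JS} together with Theorem \ref{sqinteqthm} (and Corollary \ref{jsgaleq}) shows that $L(s,\pi_v,\wedge^2)=\ljsepiv$ is \emph{regular} in the region $\res>0$ when $r=2n$ is even; in fact since $\pi_v$ has no Shalika functional one expects $L(s,\pi_v,\wedge^2)$ to be holomorphic at $s=0$ as well. Indeed, the only way $\lshepiv$ (equivalently $\ljsepiv$) can have a pole at $s=0$ for a square integrable representation of $\mathrm{GL}_{2n}$ is if $\pi_v$ admits a Shalika functional (this is the exterior-square analogue of the pole criterion, and is exactly why the hypothesis ``no Shalika functional'' is imposed). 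Granting this, $L(s,\pi_v,\wedge^2)$ is holomorphic and non-vanishing at $s=0$: holomorphic by the above, and non-vanishing because, being of the form $1/P(q_v^{-s})$ with $P(0)=1$, it never vanishes. Therefore the pole of $L(s,\pi_v\times\pi_v)$ at $s=0$, if it exists, must come entirely from $L(s,\pi_v,\mathrm{Sym}^2)$, and conversely any pole of $L(s,\pi_v,\mathrm{Sym}^2)$ at $s=0$ forces a pole of $L(s,\pi_v\times\pi_v)$ there (no cancellation is possible since neither factor vanishes at $s=0$). Combining the two equivalences yields: $L(s,\pi_v,\mathrm{Sym}^2)$ has a pole at $s=0$ $\iff$ $L(s,\pi_v\times\pi_v)$ has a pole at $s=0$ $\iff$ $\pi_v\simeq\tilde\pi_v$.

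The step I expect to be the main obstacle is justifying that $L(s,\pi_v,\wedge^2)$ is \emph{holomorphic at $s=0$} precisely because $\pi_v$ has no Shalika functional. For $\res>0$ this is already Theorem \ref{sq-int-JS}, but the boundary point $s=0$ requires the finer statement that a pole there detects the Shalika functional; one would invoke the analysis of the local integrals $\jswvphiv$ near $s=0$ (the residue at $s=0$ being a Shalika period integral, cf. the results of Jacquet--Shalika and the first author's work in \cite{Kewat11}), or alternatively the classification of square integrable representations of $\gltwonfv$ and the known location of poles of their exterior square $L$-functions. The remaining ingredients --- the tensor-square factorization of Galois $L$-functions, the pole criterion for Rankin--Selberg $L$-functions, and the non-vanishing of $1/P(q_v^{-s})$ --- are standard and require no further work here.
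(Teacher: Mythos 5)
Your proposal follows essentially the same route as the paper: factor $L(s,\pi_v\times\pi_v)$ as the product of the exterior and symmetric square $L$-functions, use the Rankin--Selberg pole criterion at $s=0$ to detect self-duality, and then remove the exterior square factor by showing it contributes no pole at $s=0$. The paper cites Corollary 4.4 of \cite{Kewat11} for precisely the step you flagged as the main obstacle (holomorphy of $\lgepiv$ at $s=0$ under the no-Shalika-functional hypothesis), combined with Theorem \ref{sqinteqthm} and Corollary \ref{jsgaleq}; your observation that the exterior square factor is also non-vanishing at $s=0$, so no cancellation can occur, is a correct point that the paper leaves implicit.
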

\begin{proof}
 We have
\begin{equation} \label{ext-sym}
 L(s, \pi_v \times \pi_v) = L(s, \pi_v, \wedge^2) L(s, \pi_v, \text{Sym}^{2}).
\end{equation}
We know that the $L$-function $L(s, \pi_v \times \pi_v)$ has a pole at $s = 0$ if and only if $\pi_v \simeq \tilde{\pi}_v.$ 
Since $2n$ is even, Corollary 4.4 of \cite{Kewat11} shows that the $L$-function $L_{JS}(s, \pi_v, \wedge^2)$ does not have a pole at $s = 0$. Thus, from Theorems \ref{sqinteqthm} and \ref{jsgaleq}, the $L$-function $L(s, \pi_v, \wedge^2)$ does not have a pole $s = 0$. Hence, the corollary follows from equation \eqref{ext-sym}. The converse is trivial.
\end{proof}

To establish the equality of the exterior square $L$-functions for generic 
representations in the even case we proceed as follows.
We start with proving the equality for quasi-square integrable representations. 
Let $\pi_v$ be a quasi-square integrable representation of $\glrfv $. 
Then $ \pi_v = \pi_0 \otimes \chi$, where $\pi_0$ is a square integrable representation 
and $\chi = \chi_0 |~|^{s_{0}}, \chi_0 $ is an unitary character. Clearly, 
$\pi_0 \otimes \chi_0$ is a square integrable representation. Since
\begin{equation*}
 J(s, W \otimes |~|^{s_{0}}, \phi) = J(s+2s_{0}, W, \phi),
\end{equation*}
we see that
\begin{equation*}
 L_{JS}(s, \pi_v, \wedge^{2}) = L_{JS}(s+2s_0, \pi_0 \otimes \chi_0, \wedge^{2}).
\end{equation*}
Using Theorem \ref{sqinteqthm}, we get
\begin{equation} \label{qsq1}
 L_{JS}(s, \pi_0\otimes\chi_0, \wedge^{2}) = L_{Sh}(s, \pi_0 \otimes \chi_0, \wedge^{2}).
\end{equation}
Let $ \mathcal{A}_{r}(F_v) $ denote the set of isomorphism classes of irreducible admissible representations of 
$\glrfv$, and let $\mathcal{G}_{r}(F_v)$ denote the set of isomorphism classes of $\Phi$-semisimple $r$-dimensional complex representations of Weil-Deligne group $W'_{F_v}$. The local Langlands correspondence (proved by Harris and Taylor in \cite{HaTa01}, see also Henniart \cite{Henniart00}) asserts that for each $r\geq 1$, there exists a bijection 
\begin{equation*}
 \rho_{F_v} : \mathcal{A}_{r}(F_v) \longrightarrow \glrfv,
\end{equation*}
satisfying certain functorial properties. If $\sigma_v$ is a smooth irreducible representation of $\glrfv$ then (
see Theorem \ref{henniart}) has shown that
\begin{equation} \label{henniart-eq}
 L_{Sh}(s, \sigma_v, \wedge^2) = L(s, \sigma_v, \wedge^{2}) = L(s, \wedge^{2}\rho_{F}(\sigma_v)).
\end{equation}
It is easy to see that
\begin{equation}
 \wedge^{2}(\rho_{F}(\sigma_v) \otimes \chi) \simeq \wedge^{2}(\rho_{F}(\sigma_v)) \otimes \chi^{2}.
\end{equation}
Therefore, we have
\begin{align} \label{qsq2}
\notag L(s, \pi_v, \wedge^2) & = L(s, \wedge^{2}\rho_{F}(\pi_0\otimes\chi_0 |~|^{s_0})) =  L(s, \wedge^{2}(\rho_{F}(\pi_0\otimes\chi_0) \otimes |~|^{s_0}))\\
& =  L(s, \wedge^{2}(\rho_{F}(\pi_0\otimes\chi_0)) \otimes |~|^{2s_0}) =  L(s+2s_0, \wedge^{2}\rho_{F}(\pi_0\otimes\chi_0)).
\end{align}
From \eqref{qsq1}, \eqref{henniart-eq} and \eqref{qsq2}, we have
\begin{equation} \label{qsqeq}
 L_{JS}(s, \pi_v, \wedge^{2}) = L_{Sh}(s, \pi_v, \wedge^{2}) = L(s, \pi_v, \wedge^{2}).
\end{equation}
This proves the equality for quasi-square integrable representations which we record
below as a theorem.
\begin{theorem}\label{quasisqintthm}
Let $\pi_v$ be a quasi-square integrable representation of $\glrfv$. Then 
\[ L_{JS}(s, \pi_v, \wedge^{2}) = L_{Sh}(s, \pi_v, \wedge^{2}) = L(s, \pi_v, \wedge^{2}).\]
\end{theorem}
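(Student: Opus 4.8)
The plan is to prove Theorem \ref{quasisqintthm} by reducing the general quasi-square integrable case to the square integrable case already settled in Theorem \ref{sqinteqthm}, and then transporting the result across the local Langlands correspondence using Theorem \ref{henniart}. A quasi-square integrable $\pi_v$ can be written as $\pi_v = \pi_0 \otimes \chi$ with $\pi_0$ square integrable, $\chi = \chi_0|\ |^{s_0}$ and $\chi_0$ unitary; then $\pi_0 \otimes \chi_0$ is itself square integrable, so Theorem \ref{sqinteqthm} applies to it directly. The idea is to show that \emph{both} the Jacquet--Shalika and the Langlands--Shahidi $L$-functions of $\pi_v$ are obtained from those of $\pi_0 \otimes \chi_0$ by the same shift $s \mapsto s + 2s_0$ in the variable, whence their equality for $\pi_0 \otimes \chi_0$ forces their equality for $\pi_v$.

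First I would record the behaviour of the local integral $J(s,W,\phi)$ under an unramified twist: since the integrand involves $|\det g|_v^s$ and the block-diagonal embedding $g \mapsto \mathrm{diag}(g,g)$ contributes $|\det g|_v^{2s_0}$ when the Whittaker function is twisted by $|\ |^{s_0}$, one gets $J(s, W\otimes|\ |^{s_0},\phi) = J(s+2s_0, W,\phi)$. This identity at the level of all the generating integrals immediately yields $\I(\pi_v) = \I(\pi_0\otimes\chi_0)$ after the shift, and hence $L_{JS}(s,\pi_v,\wedge^2) = L_{JS}(s+2s_0, \pi_0\otimes\chi_0,\wedge^2)$ as generators (the normalisation $P(0)=1$ is preserved). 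Next, on the Galois side I would use the elementary isomorphism $\wedge^2(\rho_{F_v}(\pi_0\otimes\chi_0)\otimes|\ |^{s_0}) \simeq \wedge^2(\rho_{F_v}(\pi_0\otimes\chi_0))\otimes|\ |^{2s_0}$, together with compatibility of the local Langlands correspondence with twists, to obtain $L(s,\pi_v,\wedge^2) = L(s+2s_0, \pi_0\otimes\chi_0,\wedge^2)$; by Theorem \ref{henniart} the same shift relation then holds for $L_{Sh}$.

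Finally, I would apply Theorem \ref{sqinteqthm} to the square integrable representation $\pi_0 \otimes \chi_0$, giving $L_{JS}(s,\pi_0\otimes\chi_0,\wedge^2) = L_{Sh}(s,\pi_0\otimes\chi_0,\wedge^2)$, and also invoke Theorem \ref{henniart} (equivalently Corollary \ref{jsgaleq}) to identify this common value with $L(s,\pi_0\otimes\chi_0,\wedge^2)$. Substituting $s + 2s_0$ for $s$ throughout and using the three shift relations above yields $L_{JS}(s,\pi_v,\wedge^2) = L_{Sh}(s,\pi_v,\wedge^2) = L(s,\pi_v,\wedge^2)$, which is the assertion of the theorem.

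I do not expect a serious obstacle here: the content is entirely bookkeeping about how twists by unramified characters act on the three objects in play, and all the substantive analytic input has already been packaged into Theorem \ref{sqinteqthm} and Theorem \ref{henniart}. The one point requiring a little care is making sure the twist identity $J(s, W\otimes|\ |^{s_0},\phi) = J(s+2s_0,W,\phi)$ really does pass to the level of the fractional ideals $\I(\cdot)$ — i.e.\ that twisting gives a bijection of the spaces of generating integrals and not merely an inclusion — but this is immediate since twisting by $|\ |^{-s_0}$ provides the inverse map. Once that is noted, the proof is a short chain of substitutions, which is exactly why the excerpt presents it inline as the display \eqref{qsqeq} rather than as a separate argument.
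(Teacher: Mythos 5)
Your proposal is correct and matches the paper's argument essentially verbatim: both reduce to the square integrable representation $\pi_0 \otimes \chi_0$ via the twist, exploit the shift identity $J(s, W \otimes |\ |^{s_0}, \phi) = J(s + 2s_0, W, \phi)$ on the integral side and the isomorphism $\wedge^2(\rho \otimes \chi) \simeq (\wedge^2 \rho) \otimes \chi^2$ on the Galois side, and then combine Theorem \ref{sqinteqthm} with Theorem \ref{henniart}. The only cosmetic difference is that you explicitly observe the twist induces a bijection of the fractional ideals $\I(\cdot)$ (inverse given by twisting back), a point the paper leaves implicit.
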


Let $\pi_v$ be an irreducible generic representation of $\glrfv$, where $r=2n$. We now prove Theorem \ref{loceqthm}.
\begin{proof}
It is a theorem of Bernstein and Zelevinsky \cite{BeZe77} that $\pi_v$ is parabolically induced from 
quasi-square integrable representations. Thus, we can write
\begin{center}
$ \pi_v  = \text{Ind}( \pi_{1,v} \otimes \pi_{2,v} \otimes \cdots \otimes \pi_{r,v}), $
\end{center}
where the $ \pi_{i,v} $ are quasi-square integrable representations of ${\rm GL}_{n_{i}}(F)$ with $\sum n_{i} = 2n$. 
In \cite{CoPS942}, Cogdell and Piatetski-Shapiro have proved that 
\begin{equation}\label{gentoqsqJS}
 L_{JS}(s, \pi_v, \wedge^{2}) = \displaystyle{\prod_{i = 1}^{r}} L_{JS}(s, \pi_{i,v}, \wedge^{2}) \prod_{\substack{i <  j \\ j = 2}}^{r} L(s, \pi_{i,v} \times \pi_{j,v}),
\end{equation}
where $ L(s, \pi_{i,v} \times \pi_{j,v}) $ is the Rankin-Selberg $ L $-function of $ \pi_{i,v} \times \pi_{j,v}$.
By the local Langlands correspondence, $\pi_v$ corresponds to 
\[
\rho_{F_v}(\pi_{1,v}) \oplus \rho_{F_v}(\pi_{2,v}) \oplus \cdots \oplus \rho_{F_v}(\pi_{r,v}).
\]
Thus, we have
\begin{equation*}
 L(s, \pi_v, \wedge^{2}) = L(s, \wedge^{2}(\rho_F(\pi_{1,v}) \oplus \rho_F(\pi_{2,v}) \oplus \cdots \oplus \rho_F(\pi_{r,v}))).
\end{equation*}
If $V_1, V_2, \cdots, V_r $ are the spaces on which $\pi_{1,v},\pi_2,\ldots \pi_{r,v}$ act, we can prove that  
\begin{equation*}
 \wedge^{2}(\oplus_{i=1}^{r}V_i)\simeq (\oplus_{i=1}^{r}\wedge^{2}V_i) \bigoplus \left(\oplus_{\substack{i <  j \\ j = 2}}^{r}(V_{i} \otimes V_j)\right).
\end{equation*}
It follows that
\begin{equation}
L(s, \pi_v, \wedge^{2}) = \displaystyle{\prod_{i = 1}^{r}} L(s, \wedge^{2}\rho_F(\pi_{i,v})) \prod_{\substack{i <  j \\ j = 2}}^{r} L(s, \rho_F(\pi_{i,v}) \otimes \rho_F(\pi_{j,v})).
\end{equation}
By the local Langlands correspondence, we have
\begin{equation}
 L(s, \pi_{i,v} \times \pi_{j,v}) = L(s, \rho_{F_v}(\pi_{i,v}) \otimes \rho_{F_v}(\pi_{j,v})).
\end{equation}
Hence, we have
\begin{equation}\label{gentoqsq}
 L(s, \pi_v, \wedge^{2}) = \displaystyle{\prod_{i = 1}^{r}} L(s, \pi_{i_v}, \wedge^{2}) \prod_{\substack{i <  j \\ j = 2}}^{r} L(s, \pi_{i,v} \times \pi_{j,v}),
\end{equation}
From \eqref{qsqeq}, \eqref{gentoqsqJS} and \eqref{gentoqsq}, we have
\begin{equation}
  L_{JS}(s, \pi_v, \wedge^{2}) = L(s, \pi_v, \wedge^{2}).
\end{equation}
This completes the proof of Theorem \ref{loceqthm}.
\end{proof}
Theorem \ref{globeqthm} is an immediate consequence of the Theorems 
\ref{sqinteqthm} and \ref{loceqthm}, combined with the results of Shahidi in 
\cite{Shahidi85} for the archimedean places. It allows us to obtain the analytic properties 
of $\ljsepi$, when they are known for $\lshepi$, and conversely. The analytic properties of 
$\lshepi$ due to Kim and Shahidi were recorded in 
Theorems \ref{kimthm} and \ref{global_funteqsh} from Section \ref{lsm}. We also use 
the theorem of Gelbart and Shahidi in \cite{GeSh01} which shows that $\lshepi$ is bounded 
in vertical strips.
\begin{corollary}\label{analyticjseven} 
If $\pi$ is a unitary cuspidal automorphic representation of $GL_{2n}$ which is 
not self dual, the $L$-function $\ljsepi$ is entire, satisfies the functional equation 
\eqref{lshfnaleqn}, and is bounded in vertical strips.
\end{corollary}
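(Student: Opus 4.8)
The plan is to derive the corollary entirely from Theorem~\ref{globeqthm}, which identifies $\ljsepi$ with $\lshepi$ as a single meromorphic function on $\C$; once this identification is in hand, each of the three asserted analytic properties is simply inherited from the corresponding known statement about the Langlands--Shahidi $L$-function. So the first step is to check that Theorem~\ref{globeqthm} applies: $\pi$ is a unitary cuspidal automorphic representation of $\gltwonaf$, hence at every finite place $\ljsepiv=\lshepiv$ by Theorem~\ref{loceqthm}, and at the archimedean places the two local factors agree because $\ljsepiv$ is defined there via the local Langlands correspondence and Shahidi's computation in \cite{Shahidi85} identifies $\lshepiv$ with the same Galois factor. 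Taking the product over all places of $F$ gives $\ljsepi=\lshepi$, and applying the same to the (also unitary cuspidal, and still non self dual) representation $\tilde{\pi}$ gives $L_{JS}(1-s,\tilde{\pi},\wedge^2)=L_{Sh}(1-s,\tilde{\pi},\wedge^2)$.

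Next I would transfer the three properties one at a time. For entireness: since $\pi$ is not self dual, Theorem~\ref{kimthm} asserts that $\lshepi$ is entire, hence so is $\ljsepi$. For the functional equation: Theorem~\ref{global_funteqsh} gives
\[
L_{Sh}(s,\pi,\wedge^2)=\epsilon_{Sh}(s,\pi,\wedge^2)\,L_{Sh}(1-s,\tilde{\pi},\wedge^2),
\]
and substituting the equalities from the previous step turns this into
\[
\ljsepi=\epsilon_{Sh}(s,\pi,\wedge^2)\,L_{JS}(1-s,\tilde{\pi},\wedge^2);
\]
the factor $\epsilon_{Sh}(s,\pi,\wedge^2)$ is the same entire, nowhere vanishing function, so \eqref{lshfnaleqn} holds verbatim with $L_{JS}$ in place of $L_{Sh}$. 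For boundedness in vertical strips: this is the theorem of Gelbart and Shahidi in \cite{GeSh01} for $\lshepi$, and being a property of the meromorphic function itself it passes immediately to $\ljsepi$.

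There is essentially no genuine obstacle left at this stage, since the real content was already carried out in the proofs of Theorems~\ref{sqinteqthm} and \ref{loceqthm}. The one point that deserves a word of care is that the equality of Theorem~\ref{globeqthm} is an equality of \emph{completed} $L$-functions, archimedean factors included; both the functional equation and the vertical-strip bounds are sensitive to the archimedean normalisation, so it is precisely because the archimedean factors on the two sides coincide (via \cite{Shahidi85} together with the local Langlands correspondence) that these properties transfer with no further adjustment.
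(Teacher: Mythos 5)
Your proposal is correct and follows essentially the same route as the paper: Theorem~\ref{globeqthm} identifies $\ljsepi$ with $\lshepi$ as a single meromorphic function, and the three asserted properties are then read off directly from Theorem~\ref{kimthm}, Theorem~\ref{global_funteqsh}, and the Gelbart--Shahidi vertical-strip bound in \cite{GeSh01}. Your added remark that the equality is one of \emph{completed} $L$-functions (with the archimedean factors matching via \cite{Shahidi85}), which is what makes the functional-equation and vertical-strip statements transfer verbatim, is a sensible point of care, implicit in the paper's brief treatment.
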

In the odd case, we can make only a weaker statement about the integrals since the equality of
the local $L$-functions has not been established even for unramified representations. However,
this statement should suffice for many, if not most, applications. As in Section \ref{esqfcr}, we
let $S_{\infty}$ denote the archimedean places of $F$ and $S_{r}$ denote the set of places
where $\pi_v$ is not unramified. We denote by $S_{ur}$ the set of finite places where $\pi_v$ is
unramified.
\begin{corollary}\label{analyticjsodd} Let $\pi$ be a unitary cuspidal automorphic representation of 
${GL}_{2n+1}$ such that the local components $\pi_v$ at the finite places are either unramified
or square integrable. Then
\[
\prod_{v\in S_{\infty}\cup S_{ur}}\lshepiv\prod_{v\in S_{r}}\ljsepiv
\]
is entire, satisfies the functional equation \eqref{lshfnaleqn}, and is bounded in vertical strips.
\end{corollary}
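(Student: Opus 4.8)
The plan is to reduce the corollary to the already-established analytic behaviour of the global Langlands--Shahidi exterior square $L$-function $\lshepi$, the point being that under the stated hypotheses the displayed product \emph{is} $\lshepi$. First I would dispose of the local comparison: for $v\in S_\infty\cup S_{ur}$ the factor appearing in the product is, by construction, the Langlands--Shahidi factor $\lshepiv$, while for $v\in S_r$ the local component $\pi_v$ is square integrable, so Theorem~\ref{sqinteqthm}, applied with $r=2n+1$, gives $\ljsepiv=\lshepiv$ at those places as well. Multiplying over all places of $F$ therefore yields
\[
\prod_{v\in S_{\infty}\cup S_{ur}}\lshepiv\cdot\prod_{v\in S_{r}}\ljsepiv=\prod_{v}\lshepiv=\lshepi.
\]

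With this identity in hand, the three assertions follow by quoting the results collected in Section~\ref{lsm} together with the theorem of Gelbart and Shahidi. Since ${\rm GL}_{2n+1}$ has odd rank, Theorem~\ref{kimthm} gives that $\lshepi$ is entire. Theorem~\ref{global_funteqsh} provides the meromorphic continuation and the functional equation~\eqref{lshfnaleqn}, which is exactly the functional equation asserted in the corollary. Finally, the theorem of Gelbart and Shahidi in \cite{GeSh01} shows that $\lshepi$ is bounded in vertical strips (there being no poles to avoid, as it is entire). Transporting these properties back across the displayed identity completes the proof.

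The argument has essentially no substantive obstacle, since the real work has been absorbed into Theorem~\ref{sqinteqthm}; the only points needing a word of care are bookkeeping. One should note that Theorem~\ref{sqinteqthm} is stated for $\glnfv$ in all ranks and that the proof given in Section~\ref{proof} for $r=2n$ was observed there to carry over, with simplifications, to $r=2n+1$, so its use here is legitimate. One should also explain why the conclusion is phrased as a mixed product rather than as a statement about $\ljsepi=\prod_v\ljsepiv$: by Remark~\ref{gcdnotknown}, in the odd case the equality $\ljsepiv=\lshepiv$ has not been established at the unramified finite places, so the Langlands--Shahidi factors there cannot (yet) be replaced by Jacquet--Shalika factors, which is precisely the sense in which this odd-case statement is \emph{less satisfactory} than Corollary~\ref{analyticjseven}.
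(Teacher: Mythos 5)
Your proposal is correct and follows the same path the paper (tersely) indicates: identify the mixed product with $\lshepi=\prod_v\lshepiv$ via the odd-rank case of Theorem~\ref{sqinteqthm} at the square integrable (hence ramified finite) places, and then invoke Theorem~\ref{kimthm}, Theorem~\ref{global_funteqsh}, and \cite{GeSh01}. Your explanation of why the odd-case statement must be phrased as a mixed product — citing Remark~\ref{gcdnotknown} to note that $\ljsepiv$ is not yet identified with $\lshepiv$ at unramified finite places in odd rank — is accurate and makes the argument more explicit than the paper's one-line proof sketch.
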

To prove the corollaries, we simply choose $W_v$ and $\phi_v$ (resp. $W_v$) 
so that $J(s,W_,\phi_v)$ (resp. $J(s,W_v)$ gives $\lshepiv$ at each finite place.

The facts that $\ljsepi$ has a functional equation and that it is bounded in 
vertical strips are new results. The corollaries above also strengthen the results of 
Theorem 5.2 of \cite{Belt11} where all the ramified and archimedean places are excluded 
for the stated holomorphy result.

For the case when $r$ is even and $\pi$ is self dual, we can use Belt's
theorem for the partial Jacquet-Shalika $L$-function to deduce holomorphy results for 
$\lshepi$. In conjunction with Section 8 of \cite{JaSh90a}, we can obtain the following
corollary.
\begin{corollary}\label{analyticlsh} Assume that $\pi$ is a 
unitary cuspidal automorphic representation of $GL_{2n}$ which is self dual and that 
the local components $\pi_v$ at the archimedean places are tempered.
If the central character $\omega_{\pi}$ is not trivial, then $\lshepi$ is entire.
If $\omega_{\pi}$ is trivial,  $\lshepi$ is holomorphic at all points except possibly
for simple poles at $s=0$ or $s=1$. There will be simple poles if and only if 
$\pi$ has a non-zero global Shalika period.
\end{corollary}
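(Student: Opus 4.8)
The plan is to transport the question to the Jacquet--Shalika side, where the holomorphy of the partial $L$-function is supplied by Belt and the residue at $s=1$ can be read off from the global integral $I(s,\varphi,\Phi)$ of Subsection \ref{irmglobal}. Since $\pi$ is self dual, $\tilde\pi\simeq\pi$, so the functional equation \eqref{lshfnaleqn} relates the values of $\lshepi$ at $s$ and $1-s$; as $\lshepi$ is holomorphic for $\res>1$ (Section \ref{lsm}), it is enough to analyse it in the closed strip $0\le\res\le 1$. By Theorem \ref{globeqthm} we may replace $\lshepi$ by $\ljsepi$, and by Corollary \ref{jsgaleq} (and, at the archimedean places, by the definition of the factor) we have $\ljsepiv=L(s,\pi_v,\wedge^2)$ for every $v$, so $\ljsepi=L^{S}_{JS}(s,\pi,\wedge^{2})\cdot\prod_{v\in S}L(s,\pi_v,\wedge^2)$, where $S=S_\infty\cup S_r$ and $L^{S}_{JS}(s,\pi,\wedge^{2})=\prod_{v\notin S}L(s,\pi_v,\wedge^2)$ is the partial $L$-function.

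First I would invoke Belt's Theorem 5.2 of \cite{Belt11}, which gives that $L^{S}_{JS}(s,\pi,\wedge^{2})$ is holomorphic in $0\le\res\le 1$ apart from at most simple poles at $s=0$ and $s=1$. To decide when these poles occur I would use the global integral: choosing decomposable data $\varphi,\Phi$ with $W_v=W_v^0$, $\Phi_v=\phi_v^0$ at $v\notin S$ (Proposition \ref{unramequal}) and, at each $v\in S_r$, a finite family of pairs realising $\ljsepiv$ as in Section \ref{esqfcr}, and then summing the resulting global integrals and applying Proposition \ref{globalint_eqlty}, one expresses $\ljsepi$ as a sum of integrals $I(s,\varphi,\Phi)$ multiplied by $\prod_{v\in S_\infty}L(s,\pi_v,\wedge^2)/J(s,W_v,\Phi_v)$. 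By Section 5 of \cite{JaSh90a} each $I(s,\varphi,\Phi)$ is holomorphic off the poles of the mirabolic Eisenstein series $E(g,\Phi,s)$ entering \eqref{global_inteven}, which has at most simple poles at $s=0,1$ and has them precisely when $\omega_{\pi}$ is trivial; and by Section 8 of \cite{JaSh90a} the residue of $I(s,\varphi,\Phi)$ at $s=1$ is a non-zero multiple of a global Shalika period of $\varphi$. Hence $\ljsepi$ acquires a pole at $s=1$ exactly when some such period is non-zero, the functional equation propagates this to $s=0$, and when $\omega_{\pi}$ is non-trivial no Shalika period exists, so $I(s,\varphi,\Phi)$ --- and with it $\ljsepi$ --- is entire in the strip.

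It remains to check that the finitely many local factors $L(s,\pi_v,\wedge^2)$ with $v\in S$, together with the archimedean integrals $J(s,W_v,\Phi_v)$ chosen above, contribute no further poles. At the archimedean places, temperedness of $\pi_v$ makes $L(s,\pi_v,\wedge^2)$ a product of Gamma factors $c\pi^{-s/2}\Gamma(\frac{s+b}{2})$ whose shifts $b$ have non-negative real part (Proposition \ref{arch-Sh}), hence holomorphic for $\res>0$; and by Theorem \ref{nv-lf} the integrals $J(s,W_v,\Phi_v)$ can be kept non-vanishing at any prescribed point, so the denominator above does no harm. At the finite places $v\in S_r$ one must verify that $L(s,\pi_v,\wedge^2)$ is holomorphic for $0<\res\le 1$: writing $\pi_v$, via Bernstein--Zelevinsky and the classification of generic unitary representations of $\glrfv$, as an induction from unitary discrete series $\delta_i$ and complementary blocks $\tau_j|\cdot|^{b_j}\times\tau_j|\cdot|^{-b_j}$ with $0<b_j<\frac12$, one expands $L(s,\pi_v,\wedge^2)$ through \eqref{gentoqsq} and bounds the poles of the quasi-square-integrable factors (using Theorem \ref{sq-int-JS}) and of the Rankin--Selberg cross terms, using the Jacquet--Shalika bound on the exponents of a local component of a cuspidal representation together with the self-duality of $\pi$. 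This last local bookkeeping is the delicate point and is the step I expect to be the main obstacle; granting it, the two preceding paragraphs combine to show that $\lshepi=\ljsepi$ is holomorphic outside $\{0,1\}$, with at most simple poles there occurring precisely when $\pi$ has a non-zero global Shalika period, and is entire when $\omega_{\pi}$ is non-trivial, as asserted.
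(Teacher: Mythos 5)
Your overall strategy is the one the paper has in mind: identify $\lshepi$ with $\ljsepi$ via Theorem \ref{globeqthm}, bring in Belt's Theorem 5.2 and the global Jacquet--Shalika integral together with Section 8 of \cite{JaSh90a} for the pole--period dictionary, and control the archimedean contributions via temperedness (Proposition \ref{arch-Sh}) and Theorem \ref{nv-lf}. This matches the paper's sketch, which likewise proceeds by choosing local data at the finite places so that the (sums of) local integrals realise the corresponding $L$-factors and then reads off the poles from the Eisenstein series.

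Where your proposal goes astray is the final paragraph. You flag the holomorphy of $L(s,\pi_v,\wedge^2)$ at $v\in S_r$ in $0<\res\le 1$ as ``the delicate point'' and ``the main obstacle,'' proposing to control it through the Bernstein--Zelevinsky classification and the Jacquet--Shalika exponent bound. That step is unnecessary: once you have chosen local data so that the sums of local integrals realise $\ljsepiv$ at every finite place and have written
\[
\ljsepi = \Bigl(\sum_i I(s,\varphi_i,\Phi_i)\Bigr)\cdot\prod_{v\in S_\infty}\frac{L(s,\pi_v,\wedge^2)}{J(s,W_v,\Phi_v)},
\]
the right-hand side already shows that in $0\le\res\le 1$ every pole of $\ljsepi$ is either a pole of the mirabolic Eisenstein series (confined to $s=0,1$, present only when $\omega_\pi$ is trivial, with residue controlled by the Shalika period) or arises from a zero of some archimedean $J(s,W_v,\Phi_v)$, which Theorem \ref{nv-lf} lets you avoid pointwise. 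The finite-place $L$-factors sit inside the left-hand side and are constrained automatically; no separate local estimate is required. Moreover, the local route you sketch would not go through as written: the Jacquet--Shalika bound only gives $|b_j|<1/2$, and after forming exterior-square and Rankin--Selberg cross-terms the resulting potential pole locations of the local factor can approach $\res=1$, so excluding them directly would need bounds essentially equivalent to Ramanujan at the ramified places. The global integral argument renders all of this moot. Apart from this, your argument is correct.
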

One non-trivial case covered by the corollary above is the following. Let $\sigma$ be
a cuspidal automorphic representation of $\gltwoaq$ associated to an arbitrary holomorphic cusp
form or to a Maass cusp form on the full modular group, and let $\pi$ be the symmetric cube lift of 
$\sigma$ to $\rm{GL}_4(\aq)$. Then the archimedean places of $\pi$ are known to be tempered.

\section{The local functional equation}\label{locfneqn}
The main purpose of this section is to obtain a local functional equation for the 
$L$-functions $\ljsepiv$ using
the same global methods as before. The key point is that we are able to define 
a local $\epsilon$-factor $ \epsilon_{JS}(s, \pi_{v_k}, \psi_{v_k}, \wedge^{2})$. 
Of course, this is conjecturally the same as the $\epsilon$-factor arising in the
Langlands-Shahidi method. We will continue to use the convention that a 
monomial $M(X)$ is an element of $\C[X,X^{-1}]$ of the form $cX^m$, where 
$m$ lies in $\Z$.

\subsection{The even case: $r=2n$} 
\begin{theorem}\label{locfnaleqn}
Let $F_{v}$ be a $p$-adic field. If $\pi_v$ is an irreducible generic representation of ${\rm GL}_{r}(F_v)$, which occurs as the local constituent of a cuspidal automorphic representation $\pi$ of ${\rm GL}_r$, we have
\[ \dfrac{J(1-s, \rho(w_{n,n})\tilde{W}_{v}, \hat{\phi}_v)}{L(1-s, \tilde{\pi}_v, \wedge^{2})} = \epsilon_{JS}(s, \pi_v, \psi_v, \wedge^{2}) \dfrac{J(s, W_{v}, \phi_v)}{L(s, \pi_v, \wedge^{2})} , \]
where $\epsilon_{JS}(s, \pi_v, \psi_v, \wedge^{2})$ is entire and non-vanishing. 
\end{theorem}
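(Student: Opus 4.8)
The plan is to run the same global method used to prove Theorem \ref{sqinteqthm}, now extracting a \emph{local} identity from the global functional equation and using the already-established equality $L=L_{JS}$ of Theorem \ref{loceqthm}. Write $v_0=v$, so that $\pi_{v_0}=\pi_v$ is the local component at $v_0$ of a unitary cuspidal automorphic representation $\pi$ of $\mathrm{GL}_{2n}$ (for $\pi_v$ square integrable this hypothesis is automatic by Proposition \ref{arthur-clozel-embed} together with Kable's lemma). The strategy has three steps: (i) deduce from the global functional equation a ``crude'' local functional equation at $v_0$, valid for all $W_{v_0}$ and $\phi_{v_0}$, with a proportionality factor $\Gamma(s)\in\C(q_{v_0}^{-s})$ depending only on $\pi_{v_0}$ and $\psi_{v_0}$; (ii) \emph{define} $\epsilon_{JS}(s,\pi_{v_0},\psi_{v_0},\wedge^2):=\Gamma(s)\,L(s,\pi_{v_0},\wedge^2)/L(1-s,\tilde\pi_{v_0},\wedge^2)$, so that the asserted identity holds by construction; (iii) identify this factor with the polynomial $M_1$ of Section \ref{esqfcr} and invoke Lemma \ref{monemon}.

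For step (i): at each unramified finite place $v\ne v_0$ take the spherical data $W_v^0,\phi_v^0$ of Proposition \ref{unramvector}, and at each of the finitely many remaining places $v\ne v_0$ (archimedean or ramified) use Theorem \ref{nv-lf} to choose $W_v,\phi_v$ so that the auxiliary product $A(s)=\prod_{v\ne v_0}J(s,W_v,\phi_v)$ is not identically zero (each finite factor can be made $\not\equiv 0$, and the unramified factors are the nonvanishing local $L$-functions, with the infinite product converging for $\mathrm{Re}(s)$ large since it is a translate of the global exterior square $L$-function). For $\mathrm{Re}(s)$ large the global integral attached to the resulting decomposable data factors as $\prod_v J(s,W_v,\phi_v)$ by Proposition \ref{globalint_eqlty}; applying the global functional equation $I(s,\varphi,\Phi)=I(1-s,\rho(w_{n,n})\tilde\varphi,\widehat\Phi)$ of Theorem \ref{global_funteq} and continuing meromorphically gives, for all $W_{v_0}$ and $\phi_{v_0}$,
\[
J(1-s,\rho(w_{n,n})\widetilde{W_{v_0}},\widehat{\phi_{v_0}})\,\prod_{v\ne v_0}J(1-s,\rho(w_{n,n})\widetilde{W_v},\widehat{\phi_v})=J(s,W_{v_0},\phi_{v_0})\,\prod_{v\ne v_0}J(s,W_v,\phi_v).
\]
The right-hand product is $A(s)\not\equiv 0$, and choosing $v_0$-data with $J(s,W_{v_0},\phi_{v_0})\not\equiv 0$ (e.g.\ realising $L_{JS}$) forces the left-hand auxiliary product $B(s)$ to be $\not\equiv 0$ as well; dividing yields $J(1-s,\rho(w_{n,n})\widetilde{W_{v_0}},\widehat{\phi_{v_0}})=\Gamma(s)\,J(s,W_{v_0},\phi_{v_0})$ with $\Gamma(s)=A(s)/B(s)$. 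Since both $J(s,W_{v_0},\phi_{v_0})$ and $J(1-s,\rho(w_{n,n})\widetilde{W_{v_0}},\widehat{\phi_{v_0}})$ are rational in $q_{v_0}^{-s}$, so is $\Gamma$; and the defining relation shows $\Gamma$ is independent of $\pi$ and of the auxiliary data, depending only on $\pi_{v_0}$ and $\psi_{v_0}$.

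For step (iii): apply the crude local functional equation to the data $W_{i,v_0},\phi_{i,v_0}$ of \eqref{rami1} and sum over $i$. The left-hand side is $\sum_i J(1-s,\rho(w_{n,n})\widetilde{W_{i,v_0}},\widehat{\phi_{i,v_0}})=M_1(q_{v_0}^{-(1-s)})\,L_{JS}(1-s,\tilde\pi_{v_0},\wedge^2)$ by \eqref{contra1} with $s$ replaced by $1-s$, while the right-hand side is $\Gamma(s)\sum_i J(s,W_{i,v_0},\phi_{i,v_0})=\Gamma(s)\,L_{JS}(s,\pi_{v_0},\wedge^2)$; hence $\Gamma(s)=M_1(q_{v_0}^{-(1-s)})\,L_{JS}(1-s,\tilde\pi_{v_0},\wedge^2)/L_{JS}(s,\pi_{v_0},\wedge^2)$. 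By Theorem \ref{loceqthm}, $L=L_{JS}$ for both $\pi_{v_0}$ and $\tilde\pi_{v_0}$, so the normalisation in step (ii) collapses to $\epsilon_{JS}(s,\pi_{v_0},\psi_{v_0},\wedge^2)=M_1(q_{v_0}^{-(1-s)})$, and the crude functional equation becomes exactly the asserted one. By Lemma \ref{monemon}, $M_1$ is a nonzero monomial, so $M_1(q_{v_0}^{-(1-s)})$ is a nonzero monomial in $q_{v_0}^{-s}$, and therefore an entire and nowhere-vanishing function of $s\in\C$. I expect the main obstacle to be step (i): justifying that the global functional equation genuinely specialises to a local identity with a factor depending only on $\pi_{v_0}$ and $\psi_{v_0}$. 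This requires the factorisation of the global integrals (available only for $\mathrm{Re}(s)$ large, so meromorphic continuation is needed), the non-vanishing of the auxiliary local integrals at the finitely many bad places away from $v_0$ via Theorem \ref{nv-lf}, and control of the convergence of the infinite product over the unramified places; once this crude functional equation is established, the identification with $M_1$ and the appeal to Lemma \ref{monemon} are immediate. The odd case is omitted here, but the analogue of Lemma \ref{monemon} recorded after its proof yields the corresponding statement for $r=2n+1$ by the same argument.
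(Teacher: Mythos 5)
Your proposal is correct and uses essentially the same global method as the paper: deduce the local identity from the global functional equation of Theorem \ref{global_funteq}, the factorization of Proposition \ref{globalint_eqlty}, the non-vanishing of Theorem \ref{nv-lf} at the auxiliary places, and Lemma \ref{monemon} to pin down the proportionality constant as a monomial, with Theorem \ref{loceqthm} supplying $L_{JS}=L_{Sh}=L$. The only stylistic difference is that the paper first divides by $L_{Sh}(s,\Pi,\wedge^2)$ to reduce to a finite product over $S$ (equation \eqref{lfqeq1}) and then peels off the bad finite places one at a time, applying Lemma \ref{monemon} at each, whereas you isolate the factor at $v_0$ in a single division and invoke the lemma once; that single division requires a little extra care about meromorphic continuation of the infinite unramified products, which the paper's normalisation by $L_{Sh}$ avoids.
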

The factor $\epsilon_{JS}(s, \pi_v, \psi_v, \wedge^{2})$ will be defined explicitly in 
\eqref{epsilonjs}.
Note that because of Proposition \ref{arthur-clozel-embed}, square integrable representations satisfy the 
hypotheses of the theorem, and we get an unconditional result in this case.
\begin{proof}
Let
\begin{equation*}
 W =  \prod_{v \in S} W_{v}\cdot \prod_{v \notin S} W_{v}^{0}
\quad\text{and}\quad
 \Phi = \prod_{v \in S} \phi_{v}\cdot \prod_{v \notin S}\phi_{v}^{0} ,
\end{equation*}
where $W_{v}^{0}$ and $\phi_{v}^{0}$ are as in equation \eqref{unrami} and $S=S_{\infty}\cup S_r$, as
before.
From Theorem \ref{global_funteq}, Proposition \ref{globalint_eqlty}, Theorem \ref{global_funteqsh} and equation \eqref{unrami}, we have
\begin{equation} \label{lfqeq1}
 \prod_{v \in S} \dfrac{J(s, W_v, \phi_v)}{L_{Sh}(s, \pi_v, \wedge^{2})} = \dfrac{1}{\epsilon(s, \pi, \wedge^{2})} \cdot \prod_{v \in S} \dfrac{J(1-s, \rho(w_{n,n})\tilde{W}_{v}, \hat{\phi}_v)}{L_{Sh}(1-s, \tilde{\pi}_v, \wedge^{2})}.
\end{equation}

We recall the definition of the function $M_1(q_{v_1}^{-s})$ made in Section \ref{esqfcr} (see \eqref{contra1}). Suppose there are $k$ places in $S_r$. Fix a place $v_1 \in S_r$. There exist $W_{i,v_1}$ and $\phi_{i,v_1}$ such that
\begin{equation} \label{LJS=int}
 L_{JS}(s, \pi_{v_1}, \wedge^2) = \sum_{i=1}^{n_{v_1}} J(s, W_{i,v_1}, \phi_{i,v_1}) 
\end{equation}
and
\begin{equation} \label{LJS=cint}
\sum_{i=1}^{n_{v_1}} J(1-s, \rho(w_{n,n})\tilde{W}_{i,v_1}, \hat{\phi}_{i,v_1}) = M_{1}(q_{v_1}^{-s}) L_{JS}(1-s, \tilde{\pi}_{v_1}, \wedge^2),
\end{equation}
where $ M_{1}(q_{v_1}^{-s})$ is a monomial in $q_{v_1}^{-s}$ (by Lemma \ref{monemon}).
Note that since $\pi$ is cuspidal automorphic, it is globally generic, and hence, every local component 
$\pi_v$ is generic.
By summing equation \eqref{monemon} over $i$ and using Theorem \ref{loceqthm}, we get
\begin{equation} \label{lfqeq2}
 \prod_{v \in S\smallsetminus \{v_1\}} \dfrac{J(s, W_v, \phi_v)}{L_{Sh}(s, \pi_v, \wedge^{2})} = \dfrac{M_{1}(q_{v_1}^{-s})}{\epsilon(s, \pi, \wedge^{2})} \times\prod_{v \in S\smallsetminus \{v_1\}} \dfrac{J(1-s, \rho(w_{n,n})\tilde{W}_{v}, \hat{\phi}_v)}{L_{Sh}(1-s, \tilde{\pi}_v, \wedge^{2})}.
\end{equation}
Fix a place $v_2 \in S\smallsetminus \{v_1\}$. Arguing as above, we have
\begin{equation} \label{lfqeq3}
 \prod_{v \in S\smallsetminus \{v_1, v_2\}} \dfrac{J(s, W_v, \phi_v)}{L_{Sh}(s, \pi_v, \wedge^{2})} = \dfrac{M_{2}(q_{v_2}^{-s})M_{1}(q_{v_1}^{-s})}{\epsilon(s, \pi, \wedge^{2})}\times \prod_{v \in S\smallsetminus \{v_1, v_2\}} \dfrac{J(1-s, \rho(w_{n,n})\tilde{W}_{v}, \hat{\phi}_v)}{L_{Sh}(1-s, \tilde{\pi}_v, \wedge^{2})},
\end{equation}
$M_{2}(q_{v_2}^{-s})$ is a monomial in $q_{v_2}^{-s}$.
Continuing in this way, we get
\begin{align} \label{lfqeq4}
\notag \dfrac{J(s, W_{v_k}, \phi_{v_k})}{L_{Sh}(s, \pi_v, \wedge^{2})} \prod_{v \in S_\infty} \dfrac{J(s, W_v, \phi_v)}{L_{Sh}(s, \pi_v, \wedge^{2})} = 
\dfrac{M'(s)}{\epsilon(s, \pi, \wedge^{2})}  \cdot \dfrac{J(1-s, \rho(w_{n,n})\tilde{W}_{v_k}, \hat{\phi}_{v_k})}{L_{Sh}(1-s, \tilde{\pi}_{v_k}, \wedge^{2})} \\  \times \prod_{v \in S_\infty} \dfrac{J(1-s, \rho(w_{n,n})\tilde{W}_{v}, \hat{\phi}_v)}{L_{Sh}(1-s, \tilde{\pi}_v, \wedge^{2})},
\end{align}
where $ M'(s) = \displaystyle{\prod_{i=1}^{k-1}}M_{i}(q_{v_i}^{-s}).$ Again as above, we get
\begin{equation} \label{lfqeq5}
\prod_{v \in S_\infty} \dfrac{J(s, W_v, \phi_v)}{L_{Sh}(s, \pi_v, \wedge^{2})} =  \dfrac{M(s)}{\epsilon(s, \pi, \wedge^{2})}\cdot \prod_{v \in S_\infty} \dfrac{J(1-s, \rho(w_{n,n})\tilde{W}_{v}, \hat{\phi}_v)}{L_{Sh}(1-s, \tilde{\pi}_v, \wedge^{2})},
\end{equation}
where $ M(s) = \displaystyle{\prod_{i=1}^{k}}M_{i}(q_{v_i}^{-s}).$ From equations \eqref{lfqeq4} and \eqref{lfqeq5}, we have
\begin{equation}\label{lfqeq6}
 \dfrac{J(1-s, \rho(w_{n,n})\tilde{W}_{v_k}, \hat{\phi}_{v_k})}{L_{Sh}(1-s, \tilde{\pi}_{v_{k}}, \wedge^{2})} = M_{k}(q_{v_k}^{-s}) \dfrac{J(s, W_{v_k}, \phi_{v_k})}{L_{Sh}(s, \pi_{v_k}, \wedge^{2})} 
\end{equation}
If we set
\begin{equation}\label{epsilonjs}
 \epsilon_{JS}(s, \pi_{v_k}, \psi_{v_k}, \wedge^{2}) = M_{k}(q_{v_k}^{-s}),
 \end{equation}
 we get
\begin{equation}\label{lfqeq7}
 \dfrac{J(1-s, \rho(w_{n,n})\tilde{W}_{v_k}, \hat{\phi}_{v_k})}{L_{Sh}(1-s, \tilde{\pi}_{v_{k}}, \wedge^{2})} = \epsilon_{JS}(s, \pi_{v_k}, \psi_{v_k}, \wedge^{2}) \dfrac{J(s, W_{v_k}, \phi_{v_k})}{L_{Sh}(s, \pi_{v_k}, \wedge^{2})} 
\end{equation}
The ordering of the ramified places as $1,2,\ldots, k$ was completely arbitrary. Thus equation \eqref{lfqeq7} holds 
when $k$ is replaced by any $i$, $1\le i\le k$.
 Note that we also get the local functional equation at unramified finite places. Let $v_{ur}$ be an unramified finite place. If we choose
\begin{equation*}
 W = W_{v_{ur}} \cdot\prod_{v \in S} W_{v} \cdot \prod_{v \notin S \cup \{v_{ur}\}} W_{v}^{0}
\end{equation*}
and
\begin{equation*}
 \Phi =\phi_{v_{ur}}\cdot \prod_{v \in S} \phi_{v} \cdot \prod_{v \notin S \cup \{ v_{ur}\}}\phi_{v}^{0},
\end{equation*}
and argue as above we get the local functional equation at $v_{ur}$.
\end{proof}
\begin{remark} It may appear {\it a priori} that the function $M_1(q_v^{-s})$ depends on the choices
of vectors $W_{i,v_1}$ and functions $\phi_{i,v_1}$ made in \eqref{LJS=cint} in order to obtain
$\ljsepiv$. However, once we obtain the local functional equation, 
$M_1(q_v^{-s})$ is defined completely by this equation, and is thus independent of this choice. 
The $\epsilon$-factor we have defined thus depends only on the representation $\pi_{v_1}$.
\end{remark}
\begin{remark} If $ F = \Q$ then there is only one infinite place. Hence, from equation \eqref{lfqeq5} we get 
the local functional equation at the archimedean place as well. 
\end{remark}
\subsection{The odd case: $r= 2n +1$}s
\begin{theorem}
Let $F_{v_0}$ be a $p$-adic field. If $\pi_{v_0}$ is an irreducible square integrable representation of ${\rm GL}_{r}(F_{v_0})$, we have
\[ \dfrac{J(1-s, W'_{v_0})}{L(1-s, \tilde{\pi}_{v_0}, \wedge^{2})} = \epsilon_{JS}(s, \pi_{v_0}, \psi_{v_0}, \wedge^{2}) \dfrac{J(s, W_{v_0})}{L(s, \pi_{v_0}, \wedge^{2})} , \]
where $W'_{v_0}$ is some fixed translate of $\tilde{W}_{v_0}$, as in Theorem \ref{global_funteq}, and the function $\epsilon_{JS}(s, \pi_{v_0}, \psi_{v_0}, \wedge^{2})$ is entire and non-vanishing. 
\end{theorem}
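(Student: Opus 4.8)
The plan is to mimic the global argument used for the even case in the proof of Theorem \ref{locfnaleqn}, adapted to the odd family of integrals, but restricting attention to the single place $v_0$ and exploiting the fact that there all three exterior square $L$-functions already coincide. First I would use Proposition \ref{arthur-clozel-embed} to realize the square integrable $\pi_{v_0}$ as the component at $v_0$ of a unitary cuspidal automorphic representation $\Pi$ of $\glr$ with $r=2n+1$, and fix a global additive character $\psi$ whose component at $v_0$ is the given $\psi_{v_0}$. Since $\Pi$ is cuspidal it is globally generic, so every $\Pi_v$ is generic and the local integrals $J(s,W_v)$ are defined. Write $S_\infty$ for the archimedean places, $S_r$ for the finite places where $\Pi$ is ramified, and $S=S_\infty\cup S_r$, so that $v_0\in S_r$.

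Next I would combine the functional equations. Theorem \ref{global_funteq} gives $I(s,\varphi)=I(1-s,\varphi')$ with $\varphi'$ a fixed translate of $\tilde\varphi$; Proposition \ref{globalint_eqlty} identifies $I(s,\varphi)$ for decomposable data with the Euler product $\prod_v J(s,W_v)$, and likewise $I(1-s,\varphi')=\prod_v J(1-s,W'_v)$, where $W'_v$ is the local component of the translated global Whittaker function and is a translate of $\tilde W_v$. Dividing the resulting identity through by the global Langlands--Shahidi $L$-function, using its functional equation (Theorem \ref{global_funteqsh}), and using Proposition \ref{unramequal} to cancel the spherical contributions at the places outside $S$, one obtains, for all decomposable test data,
\[
\prod_{v\in S}\frac{J(s,W_v)}{L_{Sh}(s,\Pi_v,\wedge^2)}=\frac{1}{\epsilon_{Sh}(s,\Pi,\wedge^2)}\prod_{v\in S}\frac{J(1-s,W'_v)}{L_{Sh}(1-s,\tilde\Pi_v,\wedge^2)}.
\]
I would then isolate $v_0$. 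Choose $W_{j,v_0}$ in $\mathcal W(\pi_{v_0},\psi_{v_0})$ with $\sum_j J(s,W_{j,v_0})=L_{JS}(s,\pi_{v_0},\wedge^2)$, possible because this $L$-function generates $\I(\pi_{v_0})$; then the associated translates satisfy $\sum_j J(1-s,W'_{j,v_0})=M(q_{v_0}^{-s})L_{JS}(1-s,\tilde\pi_{v_0},\wedge^2)$ for some $M\in\C[q_{v_0}^{-s},q_{v_0}^{s}]$, which is in fact a monomial by the odd-case version of Lemma \ref{monemon}. Summing the displayed identity over $j$ with fixed data at every $v\neq v_0$, and invoking Theorem \ref{sqinteqthm} at $v_0$ — which applies to the square integrable representations $\pi_{v_0}$ and $\tilde\pi_{v_0}$, so that $L_{JS}/L_{Sh}=1$ there — produces an equation whose only remaining $v_0$-dependence is the monomial $M(q_{v_0}^{-s})$.

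Dividing the original identity (for an arbitrary $W_{v_0}$ and the same data at all $v\neq v_0$) by this new equation makes all the factors at $v\in S\smallsetminus\{v_0\}$, the archimedean factors, and $\epsilon_{Sh}$ cancel, leaving
\[
\frac{J(1-s,W'_{v_0})}{L_{Sh}(1-s,\tilde\pi_{v_0},\wedge^2)}=M(q_{v_0}^{-s})\,\frac{J(s,W_{v_0})}{L_{Sh}(s,\pi_{v_0},\wedge^2)}.
\]
Replacing $L_{Sh}$ by the Galois $L$-factors via Theorem \ref{henniart} and setting $\epsilon_{JS}(s,\pi_{v_0},\psi_{v_0},\wedge^2)=M(q_{v_0}^{-s})$, which is entire and non-vanishing because it is a monomial, gives the theorem. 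The division is legitimate once the data at the finite ramified places $v\in S_r\smallsetminus\{v_0\}$ and at the archimedean places are chosen so that the integrals $J(s,W_v)$ are not identically zero, which is possible by Theorem \ref{nv-lf}: the $p$-adic ramified factors are then ratios of nonzero rational functions in $q_v^{-s}$, and the archimedean Langlands--Shahidi factors are nonvanishing by Proposition \ref{arch-Sh}.

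The main obstacle I anticipate is essentially bookkeeping rather than anything conceptual: checking that the odd-case analogue of Lemma \ref{monemon} still holds when the contragredient integral is built from the translate $W'_{v_0}$ dictated by Theorem \ref{global_funteq} rather than from $\tilde W_{v_0}$ itself — the translation contributes only a further monomial factor, so applying the global functional equation twice still forces $M$ times its counterpart for $\tilde\Pi$ to equal $\pm1$ — and carefully carrying the extra integration over $F^n\backslash\af^n$ in the odd global integral \eqref{global_intodd} through the unfolding of Proposition \ref{globalint_eqlty}. Unlike the even case, Theorem \ref{loceqthm} is not available, so one cannot simplify the local quotients at the ramified places $v\in S_r\smallsetminus\{v_0\}$; but since those quotients, together with the archimedean ones, occur identically on both sides of the final division, they cancel without ever being evaluated, which is why the restriction to square integrable $\pi_{v_0}$ — the only case where $L_{JS}=L_{Sh}=L$ is known at $v_0$ in the odd setting — already suffices.
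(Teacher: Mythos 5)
Your proposal is correct and follows essentially the same route as the paper: embed $\pi_{v_0}$ globally via Proposition \ref{arthur-clozel-embed}, use the global functional equation of Jacquet--Shalika together with that of $L_{Sh}$ and the unramified identity of Proposition \ref{unramequal} to reduce to an identity over $S$, then eliminate the $v_0$-factor by choosing data realizing $L_{JS}(s,\pi_{v_0},\wedge^2)$ and applying Lemma \ref{monemon} and Theorem \ref{sqinteqthm}, and finally divide the two identities to isolate the local functional equation at $v_0$. Your two bookkeeping caveats (that the fixed translate $W'_{v_0}$ contributes only a monomial, and that the extra $F^n\backslash\af^n$ integration passes through the unfolding) are exactly the points the paper quietly absorbs into Lemma \ref{monemon} and Proposition \ref{globalint_eqlty}, so they do not affect the argument.
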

\begin{proof}
By Proposition \ref{arthur-clozel-embed}, there exist a cuspidal automorphic representation $\Pi$ of ${\rm GL}_{2n+1}(\af)$ such that $\Pi_{v_0} \backsimeq \pi_{v_0} $. For a place $v_0$, there exist $W_{i,v_0}$ such that
\begin{equation*} 
 L_{JS}(s, \pi_{v_0}, \wedge^2) = \sum_{i=1}^{n_{v_0}} J(s, W_{i,v_0}) 
\end{equation*}
and
\begin{equation*} 
\sum_{i=1}^{n_{v_0}} J(1-s, W'_{i,v_0}) = M_{0}(q_{v_0}^{-s}) L_{JS}(1-s, \tilde{\pi}_{v_0}, \wedge^2),
\end{equation*}
where $M_{0}(q_{v_0}^{-s})$ is a monomial in $q_{v_0}^{-s}$ (by Lemma \ref{monemon}).
As in the even case, we have 
\begin{equation} \label{lfqeq-odd1}
 \prod_{v \in S} \dfrac{J(s, W_v)}{L_{Sh}(s, \Pi_v, \wedge^{2})} = \dfrac{1}{\epsilon(s, \pi, \wedge^{2})} \cdot \prod_{v \in S} \dfrac{J(1-s, W'_{v})}{L_{Sh}(1-s, \tilde{\Pi}_v, \wedge^{2})}.
\end{equation}
Again arguing as in the even case for the place $v_0$, and using Theorem \ref{sqinteqthm}, we have
\begin{equation} \label{lfqeq-odd2}
 \prod_{v \in S\smallsetminus \{v_0\}} \dfrac{J(s, W_v)}{L_{Sh}(s, \Pi_v, \wedge^{2})} = \dfrac{M_{0}(q_{v_0}^{-s})}{\epsilon(s, \Pi, \wedge^{2})} \cdot \prod_{v \in S\smallsetminus \{v_0\}} \dfrac{J(1-s, W'_{v})}{L_{Sh}(1-s, \tilde{\Pi}_v, \wedge^{2})}
\end{equation}
From equations \eqref{lfqeq-odd1} and \eqref{lfqeq-odd2}, we have
 \begin{equation}
 \dfrac{J(1-s, W'_{v_0})}{L(1-s, \tilde{\pi}_{v_0}, \wedge^{2})} = M_{0}(q_{v_0}^{-s})( \dfrac{J(s, W_{v_0})}{L(s, \pi_{v_0}, \wedge^{2})}.
\end{equation}
Set $\epsilon_{JS}(s, \pi_{v_0}, \psi_{v_0}, \wedge^{2}) = M_{0}(q_{v_0}^{-s})$. We get
\[
 \dfrac{J(1-s, W'_{v_0})}{L(1-s, \tilde{\pi}_{v_0}, \wedge^{2})} = \epsilon_{JS}(s, \pi_{v_0}, \psi_{v_0}, \wedge^{2}) \dfrac{J(s, W_{v_0})}{L(s, \pi_{v_0}, \wedge^{2})}.
\]
\end{proof}



\end{document}